\newcommand{\mc}[1]{\mathcal{#1}}
\newcommand{\mf}[1]{\mathfrak{#1}}
\newcommand{\bb}[1]{\mathbb{#1}}
\newcommand{\olin}[1]{\overline{#1}}
\newcommand{\wt}[1]{\widetilde{#1}}
\newcommand{\N}{\mathbb{N}}
\newcommand{\Q}{\mathbb{Q}}
\newcommand{\inv}{^{-1}}
\newcommand{\demog}{\delta}
\let\hom\relax
\DeclareMathOperator{\hom}{Hom}
\DeclareMathOperator{\im}{im}
\DeclareMathOperator{\Spec}{Spec}
\DeclareMathOperator{\Min}{Min}
\DeclareMathOperator{\Ass}{Ass}
\DeclareMathOperator{\Ccore}{C}
\newtheorem{theorem}{Theorem}[section]
\newtheorem{thm}[theorem]{Theorem}
\newtheorem{lemma}[theorem]{Lemma}
\newtheorem{prop}[theorem]{Proposition}
\newtheorem{cor}[theorem]{Corollary}
\newtheorem*{thmA}{Theorem A}
\newtheorem*{thmB}{Theorem B}
\newtheorem*{thmC}{Theorem C}
\theoremstyle{definition}
\newtheorem{defn}[theorem]{Definition}
\newtheorem{rmk}[theorem]{Remark}
\newtheorem{zb}[theorem]{Example}
\newtheorem{notation}[theorem]{Notation}
\newtheorem*{assume}{Assumptions}
\newtheorem*{ackblock}{Acknowledgements}
\theoremstyle{remark}
\title{The Cartier core map for Cartier algebras}
\author{Anna Brosowsky}
\address{Department of Mathematics, University of Michigan, Ann Arbor MI 48109}
\email{annabro@umich.edu}
\thanks{Partially supported by NSF DMS grants \#1840234 and \#2101075.}
\subjclass[2020]{Primary 13A35; Secondary 13F55, 14B05}
\begin{document}

\begin{abstract}
Let $R$ be a commutative Noetherian $F$-finite ring of prime characteristic and let $\mathcal{D}$ be a Cartier algebra. 
We define a self-map on the Frobenius split locus of the pair~$(R,\mathcal{D})$ by sending a point $P$ to the splitting prime of $(R_P, \mathcal{D}_P)$. We prove this map is continuous, containment preserving, and fixes the $\mathcal{D}$-compatible ideals.  
We show this map can be extended to arbitrary ideals $J$, where in the Frobenius split case it gives the largest $\mathcal{D}$-compatible ideal contained in~$J$.
Finally, we apply Glassbrenner's criterion to prove that the prime uniformly $F$-compatible ideals of a Stanley-Reisner rings are the sums of its minimal primes.
\end{abstract}

\maketitle

\section{Introduction}
\label{sec:introduction}

Frobenius splitting is an important tool in characteristic $p$ commutative algebra and algebraic geometry. Locally, Frobenius splitting (or $F$-purity) is a restriction on  singularities,  analogous to log canonicity for complex singularities. Strong $F$-regularity is a strengthening of Frobenius splitting, analogous to how Kawamata log terminality is a strengthening of log canonicity. There is much interest in understanding the world of Frobenius split objects that are not strongly $F$-regular.

For a local ring, Aberbach and Enescu introduced the \emph{splitting prime} as a way to measure the difference between Frobenius splitting and strong $F$-regularity---the elements in the splitting prime are obstructions to strong $F$-regularity, and in particular, the splitting prime of a domain is zero precisely for strongly $F$-regular rings \cite{Aberbach+Enescu.05}. Aberbach and Enescu's splitting prime can also be described as  the largest uniformly $F$-compatible ideal in the sense of Schwede \cite{Schwede.10a}. In this paper, we will be working with a generalization of the splitting prime following two different directions.

Frobenius splitting and strong $F$-regularity have  been generalized to further settings, including pairs $(X, \Delta)$ consisting of a $\Q$-divisor $\Delta$ on a smooth variety $X$ of characteristic $p$; pairs $(R,\mf a^t)$ where $R$ is a ring of characteristic $p$, $\mf a$ is an ideal, and the formal exponent $t$ is a positive real number; as well as more general settings \cite{Hara+Watanabe.02,Schwede.08,Schwede.10,Takagi.04}.
The Cartier algebra (see \Cref{full-cart-alg-def}) gives a unified and more general approach, allowing us to talk about Frobenius splitting and strong $F$-regularity for an arbitrary subalgebra of the Cartier algebra \cite{Schwede.11a}.
An important problem is to understand the extent to which strong $F$-regularity fails for an arbitrary Frobenius split subalgebra of the Cartier algebra \cite{Blickle+etal.12,Schwede.10a}.

Fix a Frobenius split pair~$(R,\mc D)$, where $\mc D$ is a Cartier subalgebra (see \Cref{cartier-mod-defs}). One main theme of this paper is to consider splitting primes via the perspective of the Cartier core map
\[
\Ccore_{\mc D}:\Spec R \to \Spec R
\]
which assigns to each prime $P\in \Spec R$ the splitting prime $\Ccore_{\mc D}(P)$ corresponding to the pair~$(R_P, \mc D_P)$. Alternatively, if $R$ is not Frobenius split one can define $\Ccore_{\mc D}$ on the Frobenius split locus, $\mc U_{\mc D}$, of $(R,\mc D)$. In this context, we show the following main result.

\begin{thmA}[\Cref{c-map-summary-thm}] 
Let $R$ be an $F$-finite Noetherian ring of characteristic $p$, and let $\mc D$ be a Cartier subalgebra. Then the Cartier core map
\[
\mathcal U_{\mc D} \rightarrow \Spec R \qquad\qquad   P\mapsto C_{\mathcal D}(P)
\]
is a continuous containment preserving map on the $F$-pure locus $\mathcal U_{\mc D}$ of the pair~$(R,\mc D)$ which fixes the $\mc D$-compatible ideals. The image of $C_{\mathcal D}$ is the set of prime $\mc D$-compatible ideals and is always finite. The image is the set of minimal primes of $R$ precisely when the pair~$(R,\mc D)$ is strongly $F$-regular.
\end{thmA}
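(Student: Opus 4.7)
My plan is to reduce all six assertions to a single structural lemma: that $\Ccore_{\mc D}(P)$ equals the largest $\mc D$-compatible ideal of $R$ contained in $P$. Once this characterization is in hand, the statements about fixed points, containment preservation, and the image description follow almost formally, and finiteness of the image reduces to Schwede's theorem that an $F$-finite Noetherian ring has only finitely many $\mc D$-compatible ideals.

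To establish the characterization, I would prove both inclusions. For the $\supseteq$ direction: given a $\mc D$-compatible ideal $J\subseteq P$, the localization $JR_P$ is $\mc D_P$-compatible and proper, hence contained in the local splitting prime of $(R_P,\mc D_P)$; contracting back to $R$ shows $J\subseteq \Ccore_{\mc D}(P)$. For the $\subseteq$ direction, show that $\Ccore_{\mc D}(P)$ is itself $\mc D$-compatible in $R$ (the main technical point), and observe that it sits inside $P$ because the local splitting prime lies in the maximal ideal $PR_P$. From this characterization, a compatible prime is trivially its own largest compatible subideal, giving the fixed-point property; the map is containment preserving because any compatible ideal contained in $P$ is also contained in any $Q\supseteq P$; and the image is exactly the set of prime $\mc D$-compatible ideals, since each value of $\Ccore_{\mc D}$ is such an ideal and each prime compatible ideal is a fixed point.

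For continuity I would exploit the fact that the intersection of $\mc D$-compatible ideals is compatible, so every ideal $I$ has a smallest compatible superset $I^{\sharp}$. Since $\Ccore_{\mc D}(P)$ is compatible, $\Ccore_{\mc D}(P)\supseteq I$ iff $\Ccore_{\mc D}(P)\supseteq I^{\sharp}$ iff $P\supseteq I^{\sharp}$, where the last equivalence uses the characterization in both directions. Hence $\Ccore_{\mc D}^{-1}(V(I)) = V(I^{\sharp})\cap \mc U_{\mc D}$, which is closed in $\mc U_{\mc D}$. For the strong $F$-regularity equivalence, I would argue locally via the Aberbach--Enescu criterion: $(R,\mc D)$ is strongly $F$-regular iff each $(R_P,\mc D_P)$ is, iff the local splitting prime at every $P$ is a minimal prime of $R_P$, iff the image of $\Ccore_{\mc D}$ consists of minimal primes of $R$ (using also that every minimal prime is $\mc D$-compatible, so automatically in the image as a fixed point).

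The main obstacle I anticipate is the technical heart of the characterization: showing that $\Ccore_{\mc D}(P)$, defined via a splitting prime in the localization $R_P$, is $\mc D$-compatible as an ideal of $R$ itself. This requires a careful comparison of $\mc D$ and $\mc D_P$ in the $F$-finite setting, likely by realizing elements of $\mc D$ through clearing of denominators from elements of $\mc D_P$, so that local compatibility can be pulled back to global compatibility.
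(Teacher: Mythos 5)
Your overall architecture is the same as the paper's, just reorganized around the characterization of $\Ccore_{\mc D}(P)$ as the largest $\mc D$-compatible ideal contained in $P$ (this is \Cref{largest-D-compat}); the compatibility of the core is \Cref{prop:c-ideal-stable}, containment in $P$ is \Cref{prop:c-ideal-in-primary-original-ideal}, your continuity argument via the smallest compatible ideal $I^{\sharp}$ containing $I$ is the mirror image of \Cref{cts-map} (which intersects the Cartier cores containing $I$), and the strong $F$-regularity equivalence is \Cref{str-f-reg=in-minl-prime}. One remark: the obstacle you flag as the technical heart --- showing $\Ccore_{\mc D}(P)$ is $\mc D$-compatible --- is much easier than you anticipate if you work with the global definition $\Ccore_{\mc D}(P)=\{r : \phi(F_*^e r)\in P \ \forall e,\phi\in\mc D_e\}$ rather than with the contraction of the local splitting prime: if $\phi(F_*^e f)\notin\Ccore_{\mc D}(P)$ then some $\phi'\cdot\phi\in\mc D_{e'+e}$ sends $F_*^{e'+e}f$ outside $P$, so $f\notin\Ccore_{\mc D}(P)$; no clearing of denominators is needed. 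The comparison with the localization is a separate (and needed) step, handled by \Cref{localization}.

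There are two genuine gaps. First, you never argue that $\Ccore_{\mc D}(P)$ is \emph{prime}, which is required both for the map to land in $\Spec R$ at all and for your description of the image as the set of prime compatible ideals: the largest compatible ideal inside a prime has no a priori reason to be prime. This needs the product trick (if $(R_P,\mc D_P)$ is $F$-pure along $c_0$ and along $c_1$, compose the two splittings after premultiplying by $c_1^{p^{e_0}-1}$ to get purity along $c_0c_1$), which is \Cref{proper-prime}; it is citable from Blickle--Schwede--Tucker, but it must at least be invoked. Second, and more seriously, you outsource finiteness of the image to ``Schwede's theorem that an $F$-finite Noetherian ring has only finitely many $\mc D$-compatible ideals.'' No such theorem exists in the generality you need: Schwede's finiteness result is for sharply $F$-pure triples over a reduced \emph{local} ring. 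The paper must prove the Cartier-subalgebra, non-local statement itself (\Cref{fin-many-C-cores}): one first shows the Cartier cores form a family of radical ideals closed under sum, intersection, and passage to minimal primes (\Cref{c-ideal-radical}, \Cref{prop:arbitrary-sum-c-ideals}, \Cref{arbitrary-intersection-c-ideal}) so that Enescu--Hochster applies locally, and then runs a separate globalization argument via a descending chain of maximal prime Cartier cores. As written, your plan has no path to finiteness.
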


In another direction, for an arbitrary (not necessarily Frobenius split) pair~$(R,\mc D)$, we can associate to \emph{any} ideal $J$ of $R$  the largest $\mc D$-compatible ideal  $\Ccore_{\mc D}(J)$ contained in $J$. We call this ideal the \emph{Cartier core} of $J$, following 
Badilla-C\'espedes, who considered this earlier for the non-pair setting  \cite{Badilla-Cespedes.21}. We will prove some basic facts about the Cartier core, many of which mirror results of Schwede in the triples setting and of Badilla-C\'espedes in the setting where $\mc D$ is the full Cartier algebra.

Returning to the non-pair setting, we also give the following explicit formula for the Cartier core of an arbitrary ideal in a quotient of a regular ring, making use of a criterion for strong $F$-regularity due to Glassbrenner \cite{Glassbrenner.96}.

\begin{thmB}[\Cref{c-ideal-quo-reg-ring}]
Let $S$ be a regular $F$-finite ring, let $I\subseteq J$ be ideals of $S$, and let $R=S/I$. 
Then
\[
C_R(J/I) = \left(\bigcap_{e>0} J^{[p^e]} :_S (I^{[p^e]} :_S I)\right)/I.
\]
\end{thmB}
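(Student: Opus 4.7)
Write $L_e := (I^{[p^e]} :_S I)$ and $N := \bigcap_{e>0}(J^{[p^e]} :_S L_e)$. The plan is to show $C_R(J/I) = N/I$. First, note that $N/I$ is a well-defined ideal of $R$: since $L_e \cdot I \subseteq I^{[p^e]} \subseteq J^{[p^e]}$ (using $I \subseteq J$), we have $I \subseteq J^{[p^e]} :_S L_e$ for every $e$, hence $I \subseteq N$.

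The key tool is the Fedder-type correspondence available because $S$ is regular $F$-finite: fixing a generator $\Phi^e$ of $\hom_S(F^e_* S, S)$, one has a natural isomorphism $\hom_R(F^e_* R, R) \cong F^e_*(L_e/I^{[p^e]})$, where $d \in L_e$ corresponds to the Cartier map $\phi_d(F^e_* \overline r) = \overline{\Phi^e(F^e_*(dr))}$. Combined with the elementary principle (valid because $\hom_S(F^e_* S, S)$ is generated by $\Phi^e$ as an $F^e_* S$-module) that $r \in K^{[p^e]}$ iff $\phi(F^e_* r) \in K$ for every $\phi \in \hom_S(F^e_* S, S)$, this translates uniform $F$-compatibility of an ideal $K/I$ of $R$ (with $I \subseteq K \subseteq S$) into the purely ideal-theoretic condition $L_e \cdot K \subseteq K^{[p^e]}$ for all $e > 0$.

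From this, the inclusion $C_R(J/I) \subseteq N/I$ is easy: any uniformly $F$-compatible $K/I \subseteq J/I$ satisfies $L_e K \subseteq K^{[p^e]} \subseteq J^{[p^e]}$, so $K \subseteq J^{[p^e]} :_S L_e$ for each $e$, hence $K \subseteq N$. For the reverse inclusion, the plan is to verify directly that $N/I$ is uniformly $F$-compatible and contained in $J/I$. Compatibility is a direct computation: for $d \in L_e$, $k \in N$, $e' > 0$, and $u \in L_{e'}$, the $S$-linearity of $\Phi^e$ gives
\[ u\,\Phi^e(F^e_*(dk)) \;=\; \Phi^e(F^e_*(u^{p^e} d k)), \]
and a short check shows $u^{p^e} d \in L_{e+e'}$ (from $uI \subseteq I^{[p^{e'}]}$ and $dI \subseteq I^{[p^e]}$ one gets $u^{p^e} d I \subseteq (uI)^{[p^e]} \subseteq I^{[p^{e+e'}]}$). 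Thus $u^{p^e} dk \in L_{e+e'} N \subseteq J^{[p^{e+e'}]} = (J^{[p^{e'}]})^{[p^e]}$, which $\Phi^e$ carries into $J^{[p^{e'}]}$; varying $u$ and $e'$ shows $\Phi^e(F^e_*(dk)) \in N$, as required.

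The main obstacle is proving $N \subseteq J$, and this is precisely where Glassbrenner's criterion for strong $F$-regularity (which characterizes such regularity in terms of the ideals $L_e$ and Frobenius powers of maximal ideals) enters. The plan is by contradiction: suppose $x \in N \setminus J$ and localize $S$ at a prime $\mf{q}$ minimal over $J$ with $x \notin \mf{q}$, so that $x$ becomes a unit in $S_\mf{q}$. The relations $x L_e \subseteq J^{[p^e]}$ then descend to force strong constraints on $L_e$ modulo $J^{[p^e]}$ in $S_\mf{q}$; a suitable application of Glassbrenner's criterion in the local ring $S_\mf{q}$ produces some $e$ and $d \in L_e$ incompatible with these constraints, giving a contradiction. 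Combining with the compatibility computation, $N/I$ is uniformly $F$-compatible and contained in $J/I$, so $N/I \subseteq C_R(J/I)$; together with the previous inclusion, $C_R(J/I) = N/I$.
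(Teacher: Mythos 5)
There is a genuine gap, and it starts with the interpretation of $C_R(J/I)$. In this paper the Cartier core is \emph{defined} as $\bigcap_{e>0}A_{\mc C_e}(J/I)$, i.e.\ the set of $r$ with $\phi(F_*^e r)\in J/I$ for all $e>0$ and all $\phi\in\hom_R(F_*^eR,R)$; it agrees with ``the largest uniformly $F$-compatible ideal contained in $J/I$'' only when $R$ is $F$-pure (\Cref{largest-D-compat}, which rests on \Cref{c-ideal-in-original-ideal}). Theorem~B carries no $F$-purity hypothesis, and the paper explicitly applies it to the non-$F$-pure ring $k[x]/\langle x^2\rangle$. Your whole architecture --- both directions of the equality --- runs through the ``largest compatible ideal'' characterization, and your pivotal intermediate claim $N\subseteq J$ is simply false in general: take $S=k[x]$, $I=\langle x^2\rangle$, $J=\langle x\rangle$; then $L_e=\langle x^{2p^e-2}\rangle$ and $J^{[p^e]}:L_e=S$ for every $e$, so $N=S\not\subseteq J$ (and indeed $C_R(J/I)=R$ here, consistent with the theorem but not with your plan). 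Even restricting to the $F$-pure case, the sketch of $N\subseteq J$ does not work as written: if $x\in N\setminus J$ but $x\in\sqrt{J}$, there is no prime minimal over $J$ avoiding $x$, so the localization step fails; and the relevant tool is not Glassbrenner's criterion for strong $F$-regularity but the Fedder/Glassbrenner splitting lemma (\Cref{lem:e-split-locus}), applied at an associated prime of $J$ whose primary component misses $x$, together with primary decomposition.

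The fix is to argue degree by degree directly from the definition, which is what the paper does: after reducing to the local case (the relevant evaluation map commutes with localization, as does colon and Frobenius), Fedder's lemma identifies $\hom_R(F_*^eR,R)$ with $F_*^e(L_e/I^{[p^e]})$ via a generator $\Phi$ of $\hom_S(F_*^eS,S)$, and then ``$\varphi(F_*^e\olin c)\in\olin J$ for all $\varphi$'' unwinds to $\Phi(F_*^e(cL_e))\subseteq J$, i.e.\ $cL_e\subseteq J^{[p^e]}$, i.e.\ $c\in J^{[p^e]}:L_e$. This gives $A_{e;R}(\olin J)=\olin{J^{[p^e]}:L_e}$ for each $e$ separately, with no compatibility, no containment $N\subseteq J$, and no $F$-purity needed; intersecting over $e$ yields the theorem. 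Your compatibility computation for $N/I$ (the $u^{p^e}d\in L_{e+e'}$ manipulation) is correct and is a nice observation, but it proves a property of $N$ rather than the identity you need.
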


This presentation of the Cartier core allows us to prove that the Cartier core map commutes with basic operations such as localizing, 
adjoining a variable, and in the case of quotients of polynomial rings, with homogenization (see \Cref{localization}, \Cref{qr-add-var}, \Cref{qr-homogenization}). As an application of these techniques, we give an exact description of the Cartier core map in the case of Stanley-Reisner rings.

\begin{thmC}[\Cref{stanley-reisner-c-map}, \Cref{stanley-reisner-general-c-map}]
Let $R$ be a Stanley-Reisner ring over a field that has prime characteristic and is $F$-finite. Let $Q$ be any prime ideal of $R$. Then
\[
C_R(Q) = \sum_{\substack{P \in \Min(R) \\ P\subseteq Q}} P.
\]
In particular, the image of the Cartier core map on primes, i.e., the set of generic points of $F$-pure centers of $R$, is the set of sums of minimal primes. Further, if $J$ is any ideal, then
\[
\Ccore_R(J) = \sum_{\substack{\mc Q\subset \Min(R) \\ \big({\bigcap\limits_{P\in \mc Q}} P\big) \subset J  }} 
    \left( \bigcap_{P\in \mc Q} P \right).
\]
\end{thmC}

This theorem extends existing work on computing certain specific uniformly $F$-compatible ideals and $\mc D$-compatible ideals, including the splitting prime and test ideals, for Stanley-Reisner rings \cite{Aberbach+Enescu.05,Badilla-Cespedes.21,Enescu+Ilioaea.20,Vassilev.98}.

\begin{assume}
All rings in this paper (other than the Cartier algebras) are commutative, Noetherian, and unital. Furthermore, all such rings are of prime characteristic $p$ and are $F$-finite.
\end{assume}

\begin{ackblock} 
I am grateful to my advisor, Karen Smith, for all of her guidance and suggestions. I would also like to thank Shelby Cox and Swaraj Pande for the helpful conversations. Thanks to W\'agner Badilla-C\'espedes, Karl Schwede, Kevin Tucker, Janet Vassilev, and the referee for their feedback on an earlier version of this paper; and especially to Anne Fayolle for pointing out an error.
\end{ackblock}

\section{Background}
\label{sec:background}
For a ring $R$ of prime characteristic $p$, the Frobenius endomorphism is the ring map $F:R\to R$ where $F(r) = r^p$. To distinguish the copies of $R$, 
we will write $F_*R$ for the codomain. As a ring, this Frobenius pushforward $F_*R = \{F_*r\: | \: r\in R\}$ is exactly the same as $R$, just with this formal symbol $F_*$ prepended everywhere. For example, multiplication is $(F_* r)(F_* s) = F_*(rs)$. The benefit of this notation is that it clarifies the $R$-module structure induced by $F$---the Frobenius map is now written as $F: R \to F_*R$ so that $F(r) = F_*(r^p)$, and the $R$-module action is now written $rF_*s = F_*(r^p s)$.
We can iterate the Frobenius, writing $F^e: R\to F_*^e R$, where $F^e(r) = F_*^e(r^{p^e})$ and $rF_*^e s = F_*^e (r^{p^e}s)$.

We will utilize this $R$-module structure on $F_*^eR$, but first we need a cohesive way to consider only certain maps in $\hom_R(F_*^e R, R)$. First, given any map $\psi \in \hom_R(F_*^dR, R)$, we write $F_*^e \psi: F_*^{e+d}R\to F_*^eR$ for the \emph{Frobenius pushforward} of the map, where 
\[
(F_*^e\psi)(F_*^{e+d} r) = (F_*^e \psi)(F_*^e(F_*^d r)) = F_*^e(\psi(F_*^d r)).
\]
Now we define a (non-commutative) multiplication on the abelian group $\bigoplus_e \hom_R(F_*^e R, R)$ as follows. Given maps $\phi \in \hom_R(F_*^e R, R)$ and $\psi \in \hom_R(F_*^d R, R)$, we define their product as
\begin{equation}
\label{eq:cartier-mult}
\phi\cdot \psi = \phi \circ F_*^e \psi.
\end{equation}
More concretely, for any $r\in R$ we have 
\[
(\phi\cdot \psi) (F_*^{e+d} r) = \phi \left( F_*^e(\psi(F_*^d r))\right).
\]

\begin{defn}
\label{full-cart-alg-def}
The \emph{(full) Cartier algebra} on $R$ is the graded non-commutative ring 
\[
\mc C_R = \bigoplus_{e\geq 0} \hom_R(F_*^e R, R),
\]
where multiplication is as defined in \Cref{eq:cartier-mult}.
\end{defn}
Note that we are writing $F_*^0R$ to mean $R$ as an $R$-module, so that $(\mc C_R)_0 = \hom_R(R,R)\cong R$. We will often write the ``multiplication by $c$'' map as simply $c$, and its pushforward as $F_*^ec$, so that $(F_*^ec)(F_*^e r) = F_*^e(cr)$.
However, this copy of $R$ is rarely central in $\mc C_R$, because for $\phi$ of degree $e$, we have $r\cdot \phi = \phi \cdot r^{p^e} $.
In particular, $R$ is central only if $R = \bb F_p$.

\begin{defn}
\label{cart-subalg-def}
A \emph{Cartier subalgebra} $\mc D$ is a graded subalgebra of $\mc C_R$ such that $\mc D_0 = R$. In particular, $\mc D$ has the form $\mc D = \bigoplus_e \mc D_e$ where $\mc D_e \ \subseteq \hom_R(F_*^eR, R)$ for all $e\geq 0$.
\end{defn}

\begin{zb}[{\cite[Rmk.~3.10]{Schwede.11a}}]
Let $(R,\mf a^t)$ be a pair where $\mf a$ is an ideal and the formal exponent $t$ is a positive real number. Then the corresponding Cartier subalgebra $\mc C^{\mf a^t}$ has 
\[
\mc C^{\mf a^t}_e = \hom_R(F_*^e R, R)\cdot \mf a^{\lceil t(p^e-1)\rceil}.
\]
\end{zb}

\begin{defn}
\label{cartier-mod-defs}
Let $R$ be a ring of prime characteristic, and let $\mc D$ be a Cartier subalgebra on $R$.
\begin{itemize}

\item The pair~$(R,\mc D)$ is \emph{$F$-finite} if $R$ is $F$-finite, i.e., $F_*R$ is a finite $R$-module. Every ring $R$ in this paper will be $F$-finite.

\item The pair~$(R,\mc D)$ is \emph{Frobenius split} or \emph{(sharply) $F$-pure} if there exists some  $e>0$ and some $\phi \in \mc D_e$ with $\phi(F_*^e 1)=1$.

\item If $c$ is an element of $R$, then the pair~$(R,\mc D)$ is \emph{eventually Frobenius split along $c$}, or \emph{$F$-pure along $c$} if there exists some $e>0$ and some $\phi \in \mc D_e$ with $\phi(F_*^e c)=1$.

\item The pair~$(R,\mc D)$ is \emph{strongly $F$-regular} if it is eventually Frobenius split along every $c$ which is not in any minimal prime of $R$.
\end{itemize}
\end{defn}
We will follow the example of Blickle, Schwede, and Tucker and omit the adjective ``sharp'' when discussing $F$-purity of pairs \cite[Def.~2.7]{Blickle+etal.12}.
Observe that if $\phi \in \mc D_e$ is a splitting of $F^e$, then there is a splitting in any multiple of the degree, given by $\phi^n \in \mc D_{en}$.

Since we will consider only pairs $(R,\mc D)$ where $R$ is Noetherian and $F$-finite, this means that for any ring $S$ such that $R\to S$ is flat, we have by \cite[Thm.~7.11]{Matsumura.89},
\[
S\otimes_R \hom_R(F_*^eR, R)\cong \hom_S(S\otimes_R F_*^e R, S).
\]
In the case that $S$ is a localization of $R$ we further know that $S$ commutes with the Frobenius, that is, for any multiplicative set $W$,
\begin{align*}
W\inv R \otimes_R \hom_R(F_*^eR, R) &\cong \hom_{W\inv R}(F_*^e (W\inv R), W\inv R).
\end{align*}
We will use this isomorphism freely: if $\frac{r}{w} \otimes \phi$ is a pure tensor in $W\inv R \otimes \hom_{R}(F_*^e R, R)$, we will identify this with the map in $\hom_{W\inv R}(F_*^e (W\inv R), W\inv R)$ which sends 
$F_*^e(\frac{s}{u})$ to $\frac{r\phi(F_*^e( su^{p^e-1}))}{wu}$. 
This identification is easier to understand if we first rewrite 
$F_*^e(\frac{s}{u})$ as 
\[
F_*^e\left(\frac{su^{p^e-1}}{u^{p^e}}\right) 
= \frac{1}{u}\cdot \frac{F_*^e(su^{p^e-1})}{1}
= \frac{F_*^e(su^{p^e-1})}{u}.
\]

Thus we have a natural containment $W\inv R\otimes_R \mc D_e\subseteq  (\mc C_{W\inv R})_e$. We can therefore construct a new Cartier subalgebra $W\inv \mc D$ on $W\inv R$ using this isomorphism, so that
\begin{align*}
(W\inv \mc D)_e = W\inv R\otimes \mc D_e.
\end{align*}
When we are localizing at a prime ideal $P$, we write this Cartier subalgebra as $\mc D_P$.

Now that we have the setup to discuss localizations of Cartier subalgebras, we can state and prove the following standard result on the Frobenius split locus in the setting of Cartier algebra pairs.
\begin{thm}
\label{f-pure-open-local}
Let $R$ be an $F$-finite ring, and $\mc D$ a Cartier subalgebra. Then the set of primes $P$ of $R$ at which $(R_P, \mc D_P)$ is $F$-pure is open. Further, the pair~$(R,\mc D)$ is $F$-pure if and only if the localized pair~$(R_P, \mc D_P)$ is $F$-pure for all primes $P$.
\end{thm}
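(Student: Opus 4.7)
The plan is to introduce, for each $e > 0$, the set
\[
I_e := \{\phi(F_*^e 1) : \phi \in \mc D_e\} \subseteq R,
\]
which is an ideal of $R$: it is closed under addition because $\mc D_e$ is, and closed under multiplication by $r \in R = \mc D_0$ because the multiplication in $\mc D$ gives $(r\phi)(F_*^e 1) = r\phi(F_*^e 1)$. The key claim is the pointwise criterion that a prime $P$ of $R$ lies in $\mc U_{\mc D}$ if and only if $I_e \not\subseteq P$ for some $e > 0$.

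The backward direction of the criterion is immediate: if $c = \phi(F_*^e 1) \in I_e \setminus P$, then $c$ is a unit in $R_P$, so $\tfrac{1}{c} \otimes \phi \in (\mc D_P)_e$ sends $F_*^e 1$ to $c/c = 1$. For the forward direction, given a splitting $\Phi \in (\mc D_P)_e$, I would pool denominators to write $\Phi = \tfrac{1}{w} \otimes \phi$ for some $\phi \in \mc D_e$ and $w \in R \setminus P$. The explicit identification from the paper gives $\Phi(F_*^e 1) = \phi(F_*^e 1)/w = 1$ in $R_P$, so there exists $u \in R \setminus P$ with $u\phi(F_*^e 1) = uw$ in $R$. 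Then $u\phi \in \mc D_e$ satisfies $(u\phi)(F_*^e 1) = uw \in I_e \setminus P$. This identifies $\mc U_{\mc D}$ as the complement of $V\!\left(\sum_{e>0} I_e\right)$, which is open, proving the first claim.

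For the ``further'' statement, the ``only if'' direction is trivial: a global splitting $\phi \in \mc D_e$ with $\phi(F_*^e 1) = 1$ localizes to $\tfrac{1}{1} \otimes \phi$ at every prime. For the converse, assume $\mc U_{\mc D} = \Spec R$. The criterion produces, for each $P$, an element $a_P = u_P w_P \in I_{e_P}$ outside $P$, so the opens $D(a_P)$ cover $\Spec R$; quasi-compactness extracts a finite subcover $D(a_1), \ldots, D(a_n)$ with $a_i \in I_{e_i}$ and $(a_1, \ldots, a_n) = R$. The main obstacle is consolidating the finitely many ideals $I_{e_i}$ into a single $I_e$ containing $1$; I would handle this by setting $e = \lcm(e_i)$ and noting that a local splitting can be raised to any multiple of its degree: if $\Phi_i \in (\mc D_{a_i})_{e_i}$ splits $F^{e_i}$ on $R_{a_i}$, an easy induction gives $\Phi_i^{e/e_i}(F_*^e 1) = 1$ in $R_{a_i}$. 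Writing this raised splitting as $\tfrac{1}{a_i^N} \otimes \tilde\phi_i$ with $\tilde\phi_i \in \mc D_e$ and clearing denominators as in the forward direction of the criterion yields $a_i^{M_i} \in I_e$ for some $M_i$. Since $V(a_i^{M_i}) = V(a_i)$, the ideal $(a_1^{M_1}, \ldots, a_n^{M_n})$ still equals $R$, whence $1 \in I_e$ and $(R, \mc D)$ is $F$-pure.
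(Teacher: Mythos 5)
Your proof is correct and follows essentially the same route as the paper: your ideal $I_e$ is exactly the image of the paper's evaluation map $\Psi_e:\mc D_e\to R$, the openness claim is the same identification of the non-$F$-pure locus with $\bigcap_{e>0}\mathbb V(I_e)$, and the compactness-plus-common-degree argument (you use $\lcm$ where the paper uses the product, and you clear denominators on basic opens rather than invoking surjectivity at every prime) is the same idea. No substantive differences to report.
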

\begin{proof}
For any $e$, we get a module map $\Psi_e: \mc D_e \to R$ via evaluation at~$F_*^e 1$. 
The pair~$(R,\mc D)$ is $F$-pure exactly when this map is surjective for some $e>0$, or equivalently, when there exists an $e>0$ such that $R/\im \Psi_e=0$. The localization $(\Psi_e)_P$ corresponds to the evaluation map $(\mc D_P)_e \to R_P$, so the pair~$(R_P, \mc D_P)$ is \emph{not} $F$-pure if and only if $R_P/\im (\Psi_e)_P \neq 0$ for all $e$.
Thus the non-$F$-pure locus is precisely the closed set $\bigcap_{e>0} \bb V(\im \Psi_e)$.

For the second statement, if $(R,\mc D)$ is $F$-pure, then there exists some $e>0$ and $\phi\in \mc D_e$ with $\phi(F_*^e 1)=1$. By definition, the localization $\phi_P : F_*^e(R_P) \to R_P$ is in $(\mc D_P)_e$, and so $(R_P, \mc D_P)$ is also $F$-pure.

Conversely, if each $(R_P, \mc D_P)$ is $F$-pure, then the complements of the sets $\bb V(\im \Psi_e)$ give an open cover of $\Spec R$. Since $\Spec R$ is compact, only finitely many are needed, say, the complements of $\bb V(\im \Psi_{e_1}), \ldots, \bb V(\im \Psi_{e_t})$. Then taking $e=e_1\cdots e_t$ to be the product of these indices, we must have that $(R_P, \mc D_P)$ has a splitting in $\mc D_e$ for every prime $P$. Thus the map $\Psi_e$ is surjective at every prime, and therefore is surjective.
\end{proof}
This proof in fact shows that for any $c$, the set of primes $P$ such that $(R_P, \mc D_P)$ is not eventually Frobenius split along $c$ is closed. Further, it also shows that $(R,\mc D)$ is eventually Frobenius split along $c$ if and only if $(R_P, \mc D_P)$ is for every prime ideal $P$. In particular, this shows that just like in the classical case, $(R,\mc D)$ is strongly $F$-regular if and only if every $(R_P, \mc D_P)$ is as well.

\section{The Cartier core map}

Fix a pair~$(R,\mc D)$ where $R$ is an $F$-finite and Frobenius split ring and  where $\mc D$ is a Cartier subalgebra. In this section we will define an explicit continuous map 
\[
\Ccore_{\mc D}: \Spec R \to \Spec R
\]
that has some especially nice properties. The image of our map is the set of $\mc D$-compatible primes of $\Spec R$, which in the case $\mc D = \mc C_R$ is the set of (generic points of) $F$-pure centers. 
If $R$ is not Frobenius split, we can instead define $\Ccore_{\mc D}$ on the open locus of Frobenius split points.
More generally, the map $\Ccore_{\mc D}$ can be viewed as an endomorphism defined on the set of \emph{all ideals} of $R$ (not necessarily proper), which is especially interesting on the class of radical ideals in a Frobenius split ring.

\begin{defn}
Let \(R\) be an $F$-finite ring of prime characteristic. Let \(J\) be an ideal of \(R\). Let $\mc D \subseteq \mc C_R$ be a Cartier subalgebra.
Then the \emph{Cartier core} of \(J\) in \(R\) with respect to $\mc D$ is 
\[
\Ccore_{\mc D}(J) = \left\{r\in R \: | \: \phi(F_*^e r)\in J \ \  \forall e>0,\ \forall\phi\in \mc D_e\right\}.
\]
\end{defn}

We will write $\Ccore_R(J)$ to mean the Cartier core with respect to the full Cartier algebra $\mc C_R$, and just $\Ccore(J)$ when the ring and Cartier subaglebra are clear from context. In the case that $\mc D = \mc C_R$, the Cartier core $\Ccore_R(J)$ is also denoted (e.g., in \cite{Badilla-Cespedes.21}) as $\mc P(J)$.

\begin{notation}
The \emph{\(e\)-th Cartier contraction} of $J$ with respect to $\mc D$ is 
\[
A_{\mc D_e}(J) = \left\{ r \in R \: | \: \phi(F_*^e r) \in J\ \  \forall \phi \in \mc D_e\right\}.
\]
\end{notation}
We can express the Cartier core in terms of the Cartier contractions as
\[
\Ccore_{\mc D}(J) = \bigcap_{e>0} A_{\mc D_e}(J).
\]
We can also express the Frobenius pushforward of the $e$-th Cartier contraction as
\[
F_*^e(A_{\mc D_e}(J)) = \bigcap_{\phi \in \mc D_e}\phi\inv(J).
\]
When $\mc D = \mc C_R$, the \(e\)-th Cartier contraction \(A_{\mc D_e}(J)\) is sometimes denoted by $J_e$.

Note that for an $F$-finite pair~$(R,\mc D)$, $A_{\mc D_e}(J)$ and \(\Ccore_{\mc D}(J)\) are ideals.
Both are clearly additively closed, so it suffices to check that if $a\in A_{\mc D_e}(J)$ and $r\in R$, then $ra\in A_{\mc D_e}(J)$.
For any $\phi \in \mc D_e$, we have $\phi(F_*^e(ra)) = (\phi \cdot r)(F_*^e a)$, which is in $J$ since $a\in A_{\mc D_e}(J)$.

The Cartier core was defined for the case $\mc C_R = \mc D$ by Badilla-C\'espedes \cite[Def.~4.12]{Badilla-Cespedes.21} as a generalization of Aberbach and Enescu's splitting prime \cite{Aberbach+Enescu.05} and of Brenner, Jeffries, and N\'u\~{n}ez Betancourt's differential core \cite{Brenner+etal.19}. Here we generalize this definition to the context of pairs, similar to Blickle, Schwede, and Tucker's generalization of the splitting prime to the context of pairs \cite{Blickle+etal.12}.

To motivate the definition of the Cartier core, note that the condition \(J\subseteq \Ccore_{\mc D}(J)\), i.e., that \(\phi(F_*^e(J))\subseteq J\) for all $e$ and for all $\phi\in \mc D_e$, is precisely the condition that \(J\) is $\mc D$-compatible. 
In the case where $\mc D$ is the full Cartier algebra, this is the same as saying $J$ is uniformly $F$-compatible. 
In fact, it is known that  when $R$ is $F$-pure, $\Ccore_{R}(J)$ is the largest uniformly $F$-compatible ideal contained in \(J\) \cite[Prop.~4.11]{Badilla-Cespedes.21}. We will see in \Cref{largest-D-compat} that when the pair~$(R,\mc D)$ is Frobenius split, $\Ccore_{\mc D}(J)$ is the largest $\mc D$-compatible ideal contained in $J$.

Further, as the next two results show, the Cartier core of a prime ideal $P$ carries information about the localization $(R_P, \mc D_P)$.

\begin{prop}[{\cite[Prop.~2.12]{Blickle+etal.12}}]
\label{f-split-proper}
Let $(R,\mc D)$ be an $F$-finite pair and let $P$ be a prime ideal of $R$. 
Then $r\notin \Ccore_{\mc D}(P)$ if and only if the pair~$(R_P, \mc D_P)$ is $F$-pure along $r/1$. In particular, $(R_P, \mc D_P)$ is $F$-pure if and only if $\Ccore_{\mc D}(P)$ is proper.
\end{prop}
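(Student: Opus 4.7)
The plan is to unpack both sides of the equivalence into explicit statements about maps, then bridge between $\mc D_e$ and its localization via the explicit description of $(\mc D_P)_e \cong R_P \otimes_R \mc D_e$ recorded earlier in the paper. Explicitly, $r\notin \Ccore_{\mc D}(P)$ means there exist $e>0$ and $\phi\in\mc D_e$ with $\phi(F_*^e r)\notin P$, while $(R_P,\mc D_P)$ being $F$-pure along $r/1$ means there exist $e>0$ and $\psi\in (\mc D_P)_e$ with $\psi(F_*^e(r/1))=1$.

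For the forward direction, I would start with $\phi\in\mc D_e$ satisfying $\phi(F_*^e r)=u\notin P$. Localizing yields $\phi_P\in (\mc D_P)_e$ with $\phi_P(F_*^e(r/1))=u/1$, which is a unit of $R_P$. Composing on the left by multiplication by $(u/1)\inv \in R_P = (\mc D_P)_0$ produces a map in $(\mc D_P)_e$ (since $\mc D$ is closed under left multiplication by its degree-zero piece) that sends $F_*^e(r/1)$ to $1$, so $(R_P,\mc D_P)$ is $F$-pure along $r/1$.

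For the reverse direction, I would write every element of $(\mc D_P)_e = W\inv \mc D_e$ (with $W = R\setminus P$) as a single fraction $\phi/w$ with $\phi\in\mc D_e$ and $w\in W$. Applying the explicit formula the paper spells out gives $\psi(F_*^e(r/1)) = \phi(F_*^e r)/w$. The equation $\phi(F_*^e r)/w = 1$ in $R_P$ produces some $v\in W$ with $v\phi(F_*^e r) = vw$, and $vw\notin P$ since $P$ is prime. Then $v\cdot\phi\in\mc D_e$ satisfies $(v\cdot\phi)(F_*^e r) = vw\notin P$, whence $r\notin \Ccore_{\mc D}(P)$. The ``in particular'' clause is the specialization $r=1$: the pair $(R_P,\mc D_P)$ is $F$-pure exactly when it is $F$-pure along $1/1$, which by the main equivalence is the same as $1\notin \Ccore_{\mc D}(P)$, i.e., properness of $\Ccore_{\mc D}(P)$.

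The main obstacle, such as it is, is the bookkeeping in the reverse direction: one must carefully invoke the explicit identification of $W\inv \mc D_e$ with the appropriate submodule of $\hom_{R_P}(F_*^e R_P, R_P)$, and then produce the element $v$ that clears denominators. The key mechanism making both directions run is that $\mc D$ is closed under left multiplication by its degree-zero piece $\mc D_0 = R$, so rescaling the output of a Cartier map by an arbitrary ring element keeps the map inside the Cartier subalgebra.
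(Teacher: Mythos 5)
Your proof is correct and follows essentially the same route as the paper: both arguments rest on the identification $(\mc D_P)_e \cong R_P\otimes_R \mc D_e$ together with the observation that a map hitting a unit can be rescaled to a splitting because $\mc D$ is closed under left multiplication by $\mc D_0=R$. The paper's version is merely terser (it states the equivalence $\phi(F_*^er)\in P \Leftrightarrow \phi(F_*^e(r/1))\in PR_P$ and leaves the denominator-clearing and rescaling implicit), whereas you spell out both directions explicitly.
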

\begin{proof}
Since $\mc D_P = \mc D\otimes R_P$, saying $\phi(F_*^e(r))\in P$ for some $\phi\in\mc D_e \subseteq \hom_R(F_*^e R, R)$ is equivalent to saying $\phi(F_*^e(r/1))\in PR_P$, viewing $\phi\in \mc (D_P)_e\subseteq \hom_{R_P}(F_*^e(R_P), R_P)$.

The pair~$(R_P,\mc D_P)$ is $F$-pure if and only if there is some $\phi\in \mc D_P$ such that $\phi(F_*^e(1))$ is a unit, i.e., not in $PR_P$, which by the above is equivalent to having $1\notin \Ccore_{\mc D}(P)$.
\end{proof}

\begin{prop}[{Cf. \cite[Thm.~2.11,Prop.~2.12]{Blickle+etal.12}}]
\label{str-f-reg=in-minl-prime}
Let $(R,\mc D)$ be an $F$-finite pair and let $P$ be a prime ideal of $R$. Then the pair~$(R_P, \mc D_P)$ is strongly $F$-regular if and only if $\Ccore_{\mc D}(P)$ is contained in some minimal prime of $R$. 
\end{prop}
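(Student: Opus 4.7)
The plan is to use \Cref{f-split-proper} together with a saturation property of the Cartier core. The key preliminary is a saturation lemma: for $u\notin P$ and $r\in R$, if $ur\in \Ccore_{\mc D}(P)$ then $r\in \Ccore_{\mc D}(P)$. To see this, note $u^{p^e}r = u^{p^e-1}(ur) \in \Ccore_{\mc D}(P)$ because $\Ccore_{\mc D}(P)$ is an ideal, so for any $\phi\in \mc D_e$ the module identity $u\cdot F_*^e r = F_*^e(u^{p^e}r)$ combined with $R$-linearity gives $u\,\phi(F_*^e r) = \phi(F_*^e(u^{p^e}r)) \in P$; since $u\notin P$ and $P$ is prime, $\phi(F_*^e r)\in P$.

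For the forward implication, assume $(R_P,\mc D_P)$ is strongly $F$-regular. If $\Ccore_{\mc D}(P)$ were contained in no minimal prime of $R$, prime avoidance applied to the finitely many minimal primes of $R$ would yield some $c\in \Ccore_{\mc D}(P)$ avoiding all of them. Then $c/1\in R_P$ avoids every minimal prime of $R_P$ (which are the extensions of the minimal primes of $R$ contained in $P$), so by strong $F$-regularity $(R_P,\mc D_P)$ is $F$-pure along $c/1$; this contradicts $c\in\Ccore_{\mc D}(P)$ by \Cref{f-split-proper}.

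For the backward implication, suppose $\Ccore_{\mc D}(P)\subseteq Q$ for some minimal prime $Q$ of $R$. My first step would be to upgrade this to $Q\subseteq P$: since $Q$ is a minimal prime of $R$ containing $\Ccore_{\mc D}(P)$, it is also minimal over $\Ccore_{\mc D}(P)$, and by the saturation lemma together with the classical fact that every associated prime of an $(R\setminus P)$-saturated ideal is contained in $P$ (primary decomposition argument), we obtain $Q\subseteq P$. Now for any $c/w\in R_P$ avoiding every minimal prime of $R_P$, choose a representative $c\in R$; then $c\notin Q$ (since $Q\subseteq P$ is such a minimal prime), so $c\notin \Ccore_{\mc D}(P)$, and \Cref{f-split-proper} yields $F$-purity along $c/w$. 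Hence $(R_P,\mc D_P)$ is strongly $F$-regular.

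The main obstacle is the upgrade $Q\subseteq P$ in the backward direction: the hypothesis as phrased allows a minimal prime $Q$ possibly outside $P$, and without forcing $Q\subseteq P$ one cannot pass from ``$c$ avoids the minimal primes of $R_P$'' to ``$c\notin \Ccore_{\mc D}(P)$''. The saturation of $\Ccore_{\mc D}(P)$ exactly bridges this gap.
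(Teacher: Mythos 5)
Your proof is correct, and at its core it follows the same route as the paper's two-sentence argument: translate strong $F$-regularity of $(R_P,\mc D_P)$ through \Cref{f-split-proper} into a statement about which elements lie in $\Ccore_{\mc D}(P)$, then apply prime avoidance over the finitely many minimal primes. What you do differently is to isolate a saturation lemma ($ur\in\Ccore_{\mc D}(P)$ and $u\notin P$ imply $r\in\Ccore_{\mc D}(P)$) in order to close a gap the paper silently skips: strong $F$-regularity of the localization only sees the minimal primes of $R_P$, i.e., the minimal primes of $R$ contained in $P$, while the hypothesis hands you a minimal prime $Q$ of $R$ that could a priori sit outside $P$. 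Your lemma and the ensuing associated-primes argument correctly force $Q\subseteq P$, and both the lemma and its proof are sound. The paper's own toolkit yields the same upgrade slightly faster: once $\Ccore_{\mc D}(P)\subseteq Q$ is proper, it is prime by \Cref{proper-prime} and contained in $P$ by \Cref{prop:c-ideal-in-primary-original-ideal} (applied with the primary ideal taken to be $P$ itself), and a prime ideal contained in a minimal prime of $R$ must equal that minimal prime, so $Q=\Ccore_{\mc D}(P)\subseteq P$ at once. Your version buys independence from the primeness of the Cartier core; the paper's buys brevity. Either way your argument is complete and, if anything, more careful than the printed proof.
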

\begin{proof}
The pair~$(R_P, \mc D_P)$ is strongly $F$-regular if and only if $(R_P, \mc D_P)$ is $F$-pure along every non-zero divisor, i.e., $\Ccore_{\mc D}(P)$ is contained in the union of the minimal primes of $R$. Since $\Ccore_{\mc D}(P)$ is an ideal, prime avoidance says this is equivalent to having $\Ccore_{\mc D}(P)$ contained in some minimal prime of $R$. 
\end{proof}

Now that we have provided some motivation for the Cartier core construction, we will discuss some of its nice properties.

\begin{prop}
\label{containment}
Let $(R,\mc D)$ be an $F$-finite pair. If $J_1 \subseteq J_2$ in \(R\), then $\Ccore_{\mc D}(J_1)\subseteq \Ccore_{\mc D}(J_2)$.
\end{prop}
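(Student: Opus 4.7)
The plan is to unpack the definition of $\Ccore_{\mc D}$ directly; this containment is essentially built into the definition since the defining condition on elements of $\Ccore_{\mc D}(J)$ is monotone in $J$.

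Concretely, I would take an arbitrary $r \in \Ccore_{\mc D}(J_1)$ and show $r \in \Ccore_{\mc D}(J_2)$. By definition of membership in $\Ccore_{\mc D}(J_1)$, we have $\phi(F_*^e r) \in J_1$ for every $e > 0$ and every $\phi \in \mc D_e$. Since $J_1 \subseteq J_2$ by hypothesis, it follows that $\phi(F_*^e r) \in J_2$ for every such $e$ and $\phi$, which is exactly the defining condition for $r \in \Ccore_{\mc D}(J_2)$. Thus $\Ccore_{\mc D}(J_1) \subseteq \Ccore_{\mc D}(J_2)$.

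There is no real obstacle here; the statement is a direct consequence of the fact that the defining condition ``$\phi(F_*^e r) \in J$'' is preserved under enlarging $J$. If one prefers a slightly more structural phrasing, one can note that each $e$-th Cartier contraction $A_{\mc D_e}(-)$ is monotone by the same one-line argument, and then $\Ccore_{\mc D}(J) = \bigcap_{e>0} A_{\mc D_e}(J)$ inherits monotonicity from intersecting monotone families. Either approach gives the result in a couple of lines.
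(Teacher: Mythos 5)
Your proof is correct and matches the paper's argument, which likewise observes that $A_{\mc D_e}(J_1)\subseteq A_{\mc D_e}(J_2)$ for each $e$ and intersects over all $e$ --- the structural phrasing you mention at the end is exactly what the paper does. No issues.
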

\begin{proof}
For every $e$, $A_{\mc D_e}( J_1)\subseteq A_{\mc D_e}(J_2)$, since if $\phi(F_*^e r)\in J_1$ for some $\phi \in \mc D_e$, we also have $\phi(F_*^e r)\in J_2$. Taking the intersection over all $e$ gives our result.
\end{proof}

\begin{prop}[{Cf. \cite[Prop~4.6]{Badilla-Cespedes.21}}]
\label{arbitrary-intersection-c-ideal}
Let $\{J_\alpha\}$ be an arbitrary collection of ideals in an $F$-finite ring $R$, and let $\mc D$ be a Cartier subalgebra. Then 
\[
\Ccore_{\mc D}\left(\bigcap_\alpha J_\alpha\right) = \bigcap_\alpha \Ccore_{\mc D}(J_\alpha). 
\]
\end{prop}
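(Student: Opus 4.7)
The plan is to verify the equality by directly unwinding the definition of the Cartier core, commuting the two universal quantifiers ``for all $\alpha$'' and ``for all $e > 0$, for all $\varphi \in \mc D_e$''.

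First I would note the easier containment $\Ccore_{\mc D}(\bigcap_\alpha J_\alpha) \subseteq \bigcap_\alpha \Ccore_{\mc D}(J_\alpha)$, which follows formally from \Cref{containment}: since $\bigcap_\alpha J_\alpha \subseteq J_\beta$ for each index $\beta$, applying the containment-preserving property yields $\Ccore_{\mc D}(\bigcap_\alpha J_\alpha) \subseteq \Ccore_{\mc D}(J_\beta)$ for every $\beta$, and intersecting over $\beta$ gives the claim.

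For the reverse containment, I would argue directly from the definition. Suppose $r \in \bigcap_\alpha \Ccore_{\mc D}(J_\alpha)$. Then for every index $\alpha$, every $e > 0$, and every $\varphi \in \mc D_e$, we have $\varphi(F_*^e r) \in J_\alpha$. Fixing an arbitrary $e > 0$ and $\varphi \in \mc D_e$, the element $\varphi(F_*^e r)$ therefore lies in $J_\alpha$ for every $\alpha$, so $\varphi(F_*^e r) \in \bigcap_\alpha J_\alpha$. Since this holds for all $e$ and $\varphi$, we conclude $r \in \Ccore_{\mc D}(\bigcap_\alpha J_\alpha)$.

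There is no real obstacle here; the only point to be careful about is that the quantifier swap is valid regardless of the cardinality of the indexing set, so the argument works for arbitrary collections, not merely finite ones. The statement could equivalently be proved at the level of the $e$-th Cartier contractions $A_{\mc D_e}(-)$ by the same quantifier manipulation, and then intersected over $e$, but this is stylistic rather than substantive.
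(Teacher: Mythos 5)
Your argument is correct and is essentially the paper's own proof: the paper simply unwinds the definition of $\Ccore_{\mc D}$ and observes that ``$\phi(F_*^e r)\in \bigcap_\alpha J_\alpha$ for all $e,\phi$'' is the same condition as ``$\phi(F_*^e r)\in J_\alpha$ for all $e,\phi$ and all $\alpha$,'' which is exactly your quantifier swap. Splitting the equality into two containments and invoking \Cref{containment} for one direction is a slightly longer route to the same place, but there is no substantive difference.
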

\begin{proof}
We see that
\begin{align*}
\Ccore_{\mc D}\left(\bigcap_\alpha J_\alpha\right) &= \left\{r\in R\: | \: \phi(F_*^e r) \in \bigcap_\alpha J_\alpha \; \forall e, \forall \phi\in \mc D_e \right\} \\
    &= \bigcap_\alpha \left\{r\in R\: | \: \phi(F_*^e r) \in J_\alpha \ \forall e, \forall \phi\in \mc D_e \right\} \\
    &= \bigcap_\alpha \Ccore_{\mc D}(J_\alpha) \qedhere
\end{align*}
\end{proof}
In particular, the set of Cartier cores with respect to $\mc D$ is closed under arbitrary intersection. We will see in \Cref{prop:arbitrary-sum-c-ideals} that this set is also closed under arbitrary sum for $F$-pure pairs.

Our next goal is to show that the Cartier core construction commutes with localization. To do so, we need the following lemma.
\begin{lemma}
\label{localization}
Let $(R,\mc D)$ be an $F$-finite pair, let $Q$ be a $P$-primary ideal of $R$, and let $W$ be a multiplicative set avoiding $P$, so that $W\cap P=\emptyset$. Then
\[
\Ccore_{W\inv \mc D}(QW\inv R) \cap R = \Ccore_{\mc D}(Q).
\]
\end{lemma}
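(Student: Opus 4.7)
The plan is to prove both containments directly from the definition of the Cartier core, using the explicit description of $W\inv R \otimes_R \hom_R(F_*^eR,R)$ recalled just before \Cref{f-pure-open-local}.

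For the easy containment $\Ccore_{\mc D}(Q) \subseteq \Ccore_{W\inv \mc D}(QW\inv R) \cap R$, I would fix $r\in \Ccore_{\mc D}(Q)$ and consider an arbitrary element of $(W\inv \mc D)_e = W\inv R \otimes_R \mc D_e$. Any such element is a finite sum of pure tensors $\tfrac{s}{u}\otimes \phi$ with $\phi \in \mc D_e$, and under the identification recalled in the preamble, such a tensor sends $F_*^e(r/1)$ to $\tfrac{s\,\phi(F_*^e r)}{u}$. Since $\phi(F_*^e r)\in Q$ by hypothesis, this lies in $QW\inv R$, and the finite sum stays in $QW\inv R$. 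Hence $r/1 \in \Ccore_{W\inv \mc D}(QW\inv R)$, so $r$ is in the contraction.

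For the reverse containment, suppose $r\in R$ with $r/1 \in \Ccore_{W\inv \mc D}(QW\inv R)$. Given any $e>0$ and $\phi \in \mc D_e$, the pure tensor $1\otimes \phi$ lies in $(W\inv \mc D)_e$ and, by the same formula, sends $F_*^e(r/1)$ to $\phi(F_*^e r)/1$, which the hypothesis forces into $QW\inv R$. Here is where the $P$-primary assumption enters: since $W\cap P=\emptyset$, the standard extension-contraction fact for primary ideals gives $QW\inv R \cap R = Q$. Explicitly, if $w\,\phi(F_*^e r)\in Q$ for some $w\in W$, then $w\notin \sqrt{Q}=P$ forces $\phi(F_*^e r)\in Q$. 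Taking the intersection over all $e$ and all $\phi$ then yields $r\in \Ccore_{\mc D}(Q)$.

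I do not anticipate any real obstacle beyond careful bookkeeping: the substantive content is already packaged in the explicit action of $W\inv R\otimes_R \mc D_e$ on Frobenius pushforwards, together with the primary-ideal fact $QW\inv R \cap R = Q$, which is exactly what motivates the $P$-primary hypothesis in the statement and would fail for a general ideal $Q$.
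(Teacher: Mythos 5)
Your proof is correct and follows essentially the same route as the paper's: both arguments use the explicit description of $(W\inv\mc D)_e = W\inv R\otimes_R \mc D_e$ acting on $F_*^e(W\inv R)$ and the contraction property $QW\inv R\cap R = Q$ for a $P$-primary ideal with $W\cap P=\emptyset$. The only cosmetic difference is that the paper phrases the argument as a single ``if and only if'' at the level of each Cartier contraction $A_{\mc D_e}$ before intersecting over $e$, whereas you prove the two containments separately.
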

\begin{proof}
By our discussion in \Cref{sec:background}, $W\inv \mc D_e$ is generated by the maps $\frac{\phi}{w}: F_*(W\inv R)\to R$ for $\phi\in \mc D_e$ and $w\in W$, where $\frac{\phi}{w}(F_*(\frac{s}{u})) = \frac{\phi(F_*^e(su^{p^e-1}))}{wu}$.
We will start by showing that $\frac{s}{1}\in A_{W\inv \mc D_e}(QW\inv R)$ if and only if $s\in A_{\mc D_e}(Q)$.

By definition, $\frac{s}{1}\in A_{W\inv \mc D_e}(QW\inv R)$ if and only if $\psi(F_*^e(\frac{s}{1}))\in QW\inv R$ for all
\(
\psi \in W\inv \mc D_e.
\)
This is equivalent to having
\[
\frac{\phi(F_*^e(s))}{w}\in QW\inv R
\]
for all $\phi \in \mc D_e$ and all $w\in W$. 
This means that we can write $\frac{\phi(F_*^e(s))}{w} = \frac{j}{u}$ for some $j\in Q$, $u\in W$, i.e., there exists $v\in W$ such that $vu \phi(F_*^e(s)) = vwj$.
The latter is in $Q$, but $vu\notin P$, so by $P$-primaryness of $Q$ we must then have $\phi(F_*^e s) \in Q$. This holds for all $\phi$ exactly when $s \in A_{\mc D_e}(Q)$.

Now we have shown our first claim, which implies $A_{\mc D}(Q) = A_{W\inv \mc D_e}(Q) \cap R$. Intersecting both sides over all $e>0$, we see
\[
\Ccore_{W\inv \mc D}(Q)\cap R = \Ccore_{\mc D}(Q).\qedhere
\]
\end{proof}

\begin{thm}
\label{thm:localization}
Let $(R,\mc D)$ be an $F$-finite pair, let $J$ be an ideal of $R$, and let $W$ be a multiplicative set avoiding every prime in $\Ass(J)$. Then
\[
\Ccore_{W\inv \mc D}(JW\inv R)\cap R = \Ccore_{\mc D}(J)
\quad \text{ and } \quad
\Ccore_{\mc D}(J)W\inv R = \Ccore_{W\inv \mc D}(JW\inv R).
\]
\end{thm}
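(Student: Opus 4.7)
The plan is to reduce to the primary case already handled by \Cref{localization}, exploiting the fact that $W$ avoids every associated prime of $J$.

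First, I would take an irredundant primary decomposition $J = Q_1 \cap \cdots \cap Q_n$, where $Q_i$ is $P_i$-primary and $\Ass(J) = \{P_1,\ldots,P_n\}$. By hypothesis, $W \cap P_i = \emptyset$ for every $i$, so \Cref{localization} applies to each $Q_i$, giving $\Ccore_{W\inv\mc D}(Q_i W\inv R)\cap R = \Ccore_{\mc D}(Q_i)$.

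For the first equality, I would use that localization commutes with finite intersections to write $JW\inv R = \bigcap_i Q_i W\inv R$, combine this with \Cref{arbitrary-intersection-c-ideal} (Cartier cores commute with intersection), and then intersect with $R$:
\[
\Ccore_{W\inv \mc D}(JW\inv R)\cap R = \bigcap_i \Ccore_{W\inv \mc D}(Q_iW\inv R)\cap R = \bigcap_i \Ccore_{\mc D}(Q_i) = \Ccore_{\mc D}\!\left(\bigcap_i Q_i\right) = \Ccore_{\mc D}(J),
\]
where the last two equalities again use \Cref{arbitrary-intersection-c-ideal}.

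For the second equality, I would prove both containments directly using the description of $W\inv \mc D_e$ as generated by maps $\phi/w$ with $\phi \in \mc D_e$, $w \in W$. For $\subseteq$: if $r \in \Ccore_{\mc D}(J)$ then for every $e$, every $\phi \in \mc D_e$, and every $w \in W$ we have $(\phi/w)(F_*^e(r/1)) = \phi(F_*^e r)/w \in JW\inv R$, so $r/1$ and hence $r/w$ (since Cartier cores are ideals) lie in $\Ccore_{W\inv \mc D}(JW\inv R)$. For $\supseteq$: given $x/w \in \Ccore_{W\inv \mc D}(JW\inv R)$, the fact that it is an ideal containing the unit $w/1$ means $x/1$ also lies in it; then by the first equality just proved, $x \in \Ccore_{W\inv \mc D}(JW\inv R)\cap R = \Ccore_{\mc D}(J)$, so $x/w \in \Ccore_{\mc D}(J)W\inv R$.

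I don't anticipate a serious obstacle, since all the work is already concentrated in \Cref{localization}; the only mild subtlety is making sure that after localizing the irredundant primary decomposition, each $Q_iW\inv R$ really is $P_iW\inv R$-primary (which follows from $W\cap P_i=\emptyset$) so that \Cref{localization} applies cleanly. Once that is noted, the two equalities follow by bookkeeping with intersections and the ideal property of Cartier cores.
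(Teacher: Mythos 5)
Your proposal is correct and follows essentially the same route as the paper: reduce to the primary case via an irredundant primary decomposition, apply \Cref{localization} to each component, and use that Cartier cores and contraction both commute with intersection. Your two-containment argument for the second equality is just a more explicit unpacking of the paper's one-line observation that contracting and then extending an ideal of a localization recovers that ideal.
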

\begin{proof}
Write $J= Q_1 \cap \cdots \cap Q_t$ a minimal primary decomposition of $J$ with corresponding primes $P_i = \sqrt {Q_i}$. Then since intersection commutes with applying $\Ccore_{\mc D}$ and with contraction,
\begin{align*}
\Ccore_{W\inv \mc D}(J)\cap R 
    &= \bigcap_{i=1}^t (\Ccore_{W\inv \mc D}(Q_i)\cap R).
\end{align*}
By \Cref{localization}, since $W\cap P_i=\emptyset$ we have $\Ccore_{W\inv \mc D}(Q_i)\cap R = \Ccore_{\mc D}(Q_i)$ and so
\[
\Ccore_{W\inv \mc D}(J)\cap R 
= \bigcap_{i=1}^t \Ccore_{\mc D}(Q_i) 
= \Ccore_{\mc D}(J).
\]

For the second equality, we note
\[
\Ccore_{W\inv \mc D}(J)
= (\Ccore_{W\inv \mc D}(J)\cap R)W\inv R 
= \Ccore_{\mc D}(J)W\inv R 
\]
since contracting then extending to a localization preserves ideals.
\end{proof}

Now that we have established the preliminary results for arbitrary ideals, we move to considering prime ideals. Our main results of the rest of this section can be summarized in the following theorem.
\begin{thm}
\label{c-map-summary-thm}
Let $R$ be an $F$-finite Noetherian ring, and let $\mc D$ be a Cartier subalgebra. Then the Cartier core construction with respect to $\mc D$ induces a well-defined, continuous, and containment preserving map on the $F$-pure locus of the pair~$(R,\mc D)$ which fixes $\mc D$-compatible ideals. The image of the map is the set of $\mc D$-compatible ideals in $\mc U_{\mc D}$ and is always finite. The image is the set of minimal primes of $R$ precisely when the pair~$(R,\mc D)$ is strongly $F$-regular.
\end{thm}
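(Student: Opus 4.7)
The proof decomposes into establishing several properties of the assignment $P \mapsto C_{\mc D}(P)$ on the $F$-pure locus $\mc U_{\mc D}$, and I would tackle them in the following order.

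The first and most delicate step is verifying that $C_{\mc D}(P)$ is in fact a prime ideal of $R$ for $P \in \mc U_{\mc D}$. I would apply \Cref{thm:localization} with $J = P$ and $W = R \setminus P$ (which avoids $\Ass(P) = \{P\}$) to obtain $C_{\mc D}(P) = C_{\mc D_P}(PR_P) \cap R$. Since $P \in \mc U_{\mc D}$, the localization $(R_P, \mc D_P)$ is $F$-pure, and I would invoke the splitting-prime theorem of Aberbach-Enescu \cite{Aberbach+Enescu.05}, extended to pairs and Cartier subalgebras by Blickle-Schwede-Tucker \cite{Blickle+etal.12}, to conclude $C_{\mc D_P}(PR_P)$ is prime; contracting then yields a prime of $R$. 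Containment preservation is immediate from \Cref{containment}. For the fixed-point claim on $\mc D$-compatible primes $P \in \mc U_{\mc D}$, the $\mc D$-compatibility directly gives $P \subseteq C_{\mc D}(P)$, while the reverse containment follows from \Cref{f-split-proper}, since $F$-purity of $(R_P, \mc D_P)$ forces $C_{\mc D_P}(PR_P)$ to be proper in the local ring $R_P$, hence contained in $PR_P$, which contracts to $C_{\mc D}(P) \subseteq P$.

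For continuity, the plan is to show that preimages of basic closed sets $\bb V(f)$ are closed: unpacking the definition, $\{P \in \mc U_{\mc D} : f \in C_{\mc D}(P)\}$ equals $\bb V(J_f) \cap \mc U_{\mc D}$, where $J_f$ is the ideal generated by the elements $\phi(F_*^e f)$ as $e > 0$ and $\phi \in \mc D_e$ vary. To describe the image, I would verify that $C_{\mc D}(P)$ is always $\mc D$-compatible via the Cartier multiplication identity $(\psi \cdot \phi)(F_*^{e+e'} r) = \psi(F_*^{e'}(\phi(F_*^e r)))$, and then observe $C_{\mc D}(P)$ lies in $\mc U_{\mc D}$ because $C_{\mc D}(P) \subseteq P$ and $F$-purity passes to further localizations; the reverse inclusion in the image is exactly the fixed-point property just established.

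Finiteness of the image reduces to the known finiteness of the set of $\mc D$-compatible primes in an $F$-finite Noetherian ring, which I would cite. For the strong $F$-regularity characterization, the forward direction combines \Cref{str-f-reg=in-minl-prime} (which forces $C_{\mc D}(P)$ into some minimal prime) with primeness of $C_{\mc D}(P)$ to conclude $C_{\mc D}(P) \in \Min(R)$ for every $P \in \Spec R = \mc U_{\mc D}$; since minimal primes are themselves fixed, the image equals $\Min(R)$. Conversely, if the image equals $\Min(R)$, then \Cref{str-f-reg=in-minl-prime} gives strong $F$-regularity of each $(R_P, \mc D_P)$ with $P \in \mc U_{\mc D}$, and this local information assembles into global strong $F$-regularity. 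The main obstacle in the whole argument is the primeness of $C_{\mc D}(P)$, which rests on the nontrivial splitting-prime theorem in the $F$-pure local setting; the other steps are comparatively direct unpackings of definitions or applications of \Cref{containment}, \Cref{f-split-proper}, and \Cref{str-f-reg=in-minl-prime}.
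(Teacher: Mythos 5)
Your decomposition matches the paper's almost step for step: well-definedness via primeness of $\Ccore_{\mc D}(P)$ together with $\Ccore_{\mc D}(P)\subseteq P$, containment preservation from \Cref{containment}, the image characterization via $\mc D$-compatibility of $\Ccore_{\mc D}(P)$ (the paper's \Cref{prop:c-ideal-stable} and \Cref{cartier-core=D-compatible}), and the strong $F$-regularity criterion via \Cref{str-f-reg=in-minl-prime}. Your continuity argument, however, is genuinely different from---and cleaner than---the paper's: you observe directly that $\{P\in\mc U_{\mc D} \: : \: f\in \Ccore_{\mc D}(P)\}=\bb V(J_f)\cap\mc U_{\mc D}$, where $J_f$ is the ideal generated by the elements $\phi(F_*^e f)$, which is an immediate unwinding of the definition and suffices since the sets $\bb V(f)$ generate the closed sets. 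The paper's \Cref{cts-map} instead realizes the preimage of $\bb V(J)\cap \mc U_{\mc D}$ as $\bb V(K)$ for $K$ an intersection of Cartier cores of primes, which leans on closure of the family of Cartier cores under arbitrary intersection (\Cref{arbitrary-intersection-c-ideal}). Your version is shorter and more self-contained, and is a legitimate alternative.

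The one genuine gap is finiteness of the image. You propose to ``cite the known finiteness of the set of $\mc D$-compatible primes,'' but in the generality at hand---an arbitrary Cartier subalgebra over a non-local $F$-finite Noetherian ring---this is exactly one of the statements the paper must prove (\Cref{fin-many-C-cores}): the result it builds on (Schwede, for sharply $F$-pure triples) is local, and the paper has to verify the hypotheses of the Enescu--Hochster theorem (radicality, closure under sum, intersection, and passage to minimal primes) and then run a separate descending-chain argument to globalize. That step needs an actual proof, not a citation. A smaller issue: in the converse of the strong $F$-regularity claim you only obtain strong $F$-regularity of $(R_P,\mc D_P)$ for $P\in\mc U_{\mc D}$ and assert that this ``assembles'' globally; to conclude you also need $\mc U_{\mc D}=\Spec R$, which does not follow formally from the image equaling $\Min(R)$. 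The paper's own proof avoids this by quantifying \Cref{str-f-reg=in-minl-prime} over all $P\in\Spec R$ (for $P\notin\mc U_{\mc D}$ one has $\Ccore_{\mc D}(P)=R$, which lies in no minimal prime), and you should phrase the converse the same way.
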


\begin{proof}
We have already seen in \Cref{containment} that the Cartier core is containment preserving, even without restricting to primes.
\Cref{c-map-well-defined} will show that the map $\Ccore:\mc U_{\mc D} \to \mc U_{\mc D}$ is well-defined. \Cref{cts-map} will show that this map is continuous, and \Cref{fin-many-C-cores} discusses the finiteness of the image. \Cref{cartier-core=D-compatible} will show that the image is precisely the set of $F$-pure $\mc D$-compatible ideals, which combined with \Cref{prop:c-ideal-stable} shows that all the $\mc D$-compatible ideals in $\mc U_{\mc D}$ are fixed.

The one statement that doesn't have a stand-alone proof elsewhere is the last one. $(R,\mc D)$ is strongly $F$-regular if and only if each $(R_P, \mc D_P)$ is strongly $F$-regular. By \Cref{str-f-reg=in-minl-prime}, this occurs exactly when each $\Ccore_{\mc D}(P)$ is contained in a minimal prime of $R$. But since $\Ccore_{\mc D}(P)$ is prime, this is equivalent to having $\Ccore_{\mc D}(P)$ be a minimal prime.
\end{proof}

It is known that the splitting prime, which in our notation is $\Ccore_{R}(\mf m)$ for $(R,\mf m)$ local, is indeed prime \cite[Thm.~3.3]{Aberbach+Enescu.05}, even in the case of an arbitrary Cartier subalgebra \cite[Prop.~2.12]{Blickle+etal.12}. After localizing, the same proof works here, which we repeat for the reader's convenience. 
\begin{prop}[{\cite[Prop.~2.12]{Blickle+etal.12}}]
\label{proper-prime}
If \(P\) is prime and \(\Ccore_{\mc D}(P)\) is proper, then \(\Ccore_{\mc D}(P)\) is prime.
\end{prop}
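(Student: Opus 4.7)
The plan is to first reduce to the case where $R$ is local. By \Cref{localization} applied with $W = R \setminus P$ (which avoids $P$ since $P$ is $P$-primary), we have $\Ccore_{\mc D}(P) = \Ccore_{\mc D_P}(PR_P) \cap R$. Since contraction of a prime ideal along any ring map is prime, it suffices to show $\Ccore_{\mc D_P}(PR_P)$ is a prime ideal of $R_P$. Replacing $(R, \mc D, P)$ by $(R_P, \mc D_P, PR_P)$, we may assume that $R$ is local with maximal ideal $\mf m = P$, and by \Cref{f-split-proper} the hypothesis that $\Ccore_{\mc D}(\mf m)$ is proper means that the pair $(R,\mc D)$ is $F$-pure.

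Now suppose for contradiction that $xy \in \Ccore_{\mc D}(\mf m)$ but $x, y \notin \Ccore_{\mc D}(\mf m)$. By definition of $\Ccore_{\mc D}(\mf m)$, there exist $e_1, e_2 > 0$ and maps $\phi_1 \in \mc D_{e_1}$, $\phi_2 \in \mc D_{e_2}$ such that $u := \phi_1(F_*^{e_1} x) \notin \mf m$ and $v := \phi_2(F_*^{e_2} y) \notin \mf m$. Since $R$ is local, $u$ and $v$ are units; and since each $\mc D_e$ is closed under the action of $R = \mc D_0$ via the Cartier multiplication, the maps $u\inv \phi_1$ and $v\inv \phi_2$ also lie in $\mc D_{e_1}$ and $\mc D_{e_2}$ respectively. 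After replacing $\phi_1$ and $\phi_2$ by these rescalings, we may assume $\phi_1(F_*^{e_1} x) = 1$ and $\phi_2(F_*^{e_2} y) = 1$.

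Finally, consider the Cartier product $\phi_2 \cdot \phi_1 \in \mc D_{e_1 + e_2}$. Using the module-structure identity $F_*^{e_1}(x y^{p^{e_1}}) = y \cdot F_*^{e_1}(x)$ and the $R$-linearity of $\phi_1$,
\[
(\phi_2 \cdot \phi_1)\bigl(F_*^{e_1 + e_2}(x y^{p^{e_1}})\bigr) = \phi_2\bigl(F_*^{e_2} \phi_1( y \cdot F_*^{e_1} x)\bigr) = \phi_2\bigl(F_*^{e_2}(y)\bigr) = 1.
\]
On the other hand, $x y^{p^{e_1}} = (xy) \cdot y^{p^{e_1} - 1}$ lies in the ideal $\Ccore_{\mc D}(\mf m)$, so the above value should lie in $\mf m$, contradicting $1 \notin \mf m$. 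Hence $\Ccore_{\mc D}(\mf m)$ is prime, completing the proof. The main point to be careful about is keeping track of the order of composition in the Cartier multiplication and the module-structure twist that converts $F_*^{e_1}(x y^{p^{e_1}})$ into $y \cdot F_*^{e_1}(x)$; everything else is a routine consequence of $F$-purity in the local setting.
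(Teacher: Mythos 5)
Your proof is correct and follows essentially the same route as the paper: you produce splittings $\phi_1,\phi_2$ along $x$ and $y$ in the localized pair and compose them, twisting by $y^{p^{e_1}-1}$, to contradict $xy\in \Ccore_{\mc D}(P)$, which is exactly the paper's computation with $\psi_1\cdot\psi_0\cdot c_1^{p^{e_0}-1}$. The only cosmetic differences are that you reduce to the local case via \Cref{localization} and contract a prime back, and that you normalize the values to $1$ by rescaling by units, whereas the paper gets both for free from \Cref{f-split-proper} and the definition of ``$F$-pure along $c$''.
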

\begin{proof}
Suppose $c_0,c_1\notin \Ccore_{\mc D}(P)$. 
Then we will show $c_0c_1\notin \Ccore_{\mc D}(P)$. 
Our assumption means that $(R_P, \mc D_{P})$ is $F$-pure along each $c_i$, i.e., there exists an $e_i$ and $\psi_i\in \mc (D_P)_{e_i}$ such that $\psi_i(F_*^{e_i}c_i) = 1$. Then applying the map $\psi_1\circ F_*^{e_1}\psi_0 \circ F_*^{e_0+e_1}(c_1^{p^{e_0}-1})$ to $F_*^{e_0+e_1}(c_0c_1)$, where we are writing $F_*^{e_0+e_1}(c_1^{p^{e_0}-1})$ to mean multiplication by this ring element, we get
\begin{center}
\begin{tikzcd}[column sep=large]
F_*^{e_0+e_1} (R_P) \rar["F_*^{e_0+e_1}(c_1^{p^{e_0}-1})"] 
    & F_*^{e_0+e_1}(R_P) \rar["F_*^{e_1}\psi_0"]  
    & F_*^{e_1}(R_P) \rar["\psi_1"] 
    & R_P \\
F_*^{e_0+e_1}(c_0c_1) \rar[mapsto] 
    & F_*^{e_0+e_1}(c_0c_1^{p^{e_0}}) = F_*^{e_1}(c_1F_*^{e_0}(c_0)) \rar[mapsto] 
    & F_*^{e_1}c_1 \rar[mapsto] 
    & 1.
\end{tikzcd}
\end{center}
Rewriting this map as $\psi_1 \circ F_*^{e_1}\psi_0 \circ F_*^{e_0+e_1}(c_1^{p^{e_0}-1}) = \psi_1 \cdot \psi_0 \cdot c_1^{p^{e_0}-1}$, we see that it is in $(\mc D_P)_{e_0+e_1}$, and thus that that $(R_P, \mc D_P)$ is also $F$-pure along $c_0c_1$, as desired.
\end{proof}

\begin{prop}
\label{prop:c-ideal-in-primary-original-ideal}
Let $R$ be a characteristic $p$, $F$-finite ring, and let $\mc D$ be a Cartier subalgebra. If $Q$ is a $P$-primary ideal of $R$ and $\Ccore_{\mc D}(P)$ is proper, then $\Ccore_{\mc D}(Q)\subseteq Q$. 
\end{prop}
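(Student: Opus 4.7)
The plan is to reduce to the local $F$-pure case via Lemma~\ref{localization}, then exploit the fact that the Cartier core is an ideal (hence closed under raising to $p^e$-th powers) against any Frobenius splitting in $\mc D_P$.

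First I would localize at $P$. Setting $W = R \setminus P$, the only associated prime of $Q$ is $P$ since $Q$ is $P$-primary, so $W$ misses $\Ass(Q)$. Lemma~\ref{localization} then yields $\Ccore_{\mc D}(Q) = \Ccore_{\mc D_P}(QR_P) \cap R$, while $P$-primaryness of $Q$ gives $QR_P \cap R = Q$. So it suffices to prove the inclusion $\Ccore_{\mc D_P}(QR_P) \subseteq QR_P$ inside $R_P$. At this point, the hypothesis that $\Ccore_{\mc D}(P)$ is proper combined with Proposition~\ref{f-split-proper} says $(R_P, \mc D_P)$ is $F$-pure, so there exist $e > 0$ and $\phi \in (\mc D_P)_e$ with $\phi(F_*^e 1) = 1$.

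To finish, I would take any $r \in \Ccore_{\mc D_P}(QR_P)$ and note that, since the Cartier core is an ideal, $r^{p^e}$ lies in it as well. Applied at this particular $e$ and $\phi$, the definition of the Cartier core gives $\phi(F_*^e(r^{p^e})) \in QR_P$. But using the $R_P$-module structure on $F_*^e R_P$, namely $a \cdot F_*^e b = F_*^e(a^{p^e} b)$, we have $F_*^e(r^{p^e}) = r \cdot F_*^e 1$, and so by $R_P$-linearity of $\phi$,
\[
\phi(F_*^e(r^{p^e})) = r \phi(F_*^e 1) = r.
\]
Thus $r \in QR_P$, completing the argument.

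There is no serious obstacle: the whole content is the observation that a $p^e$-th power is recovered by any splitting in degree $e$, and that the Cartier core is automatically closed under such powers because it is an ideal. The localization is just the standard device to make a splitting available when $(R,\mc D)$ itself is not assumed $F$-pure, and Lemma~\ref{localization} is exactly tailored to this use.
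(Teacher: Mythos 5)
Your proof is correct, but it is organized differently from the paper's. The paper argues directly in $R$, without localizing: from properness of $\Ccore_{\mc D}(P)$ it extracts $\psi\in\mc D_e$ with $\psi(F_*^e 1)\notin P$, and for $r\notin Q$ applies the twisted map $\psi\cdot r^{p^e-1}\in\mc D_e$ to $F_*^e r$ to get $r\psi(F_*^e 1)$, which lies outside $Q$ by $P$-primaryness; this shows the contrapositive $r\notin Q\Rightarrow r\notin\Ccore_{\mc D}(Q)$. You instead reduce to the local case via Lemma~\ref{localization} and Proposition~\ref{f-split-proper}, where an honest splitting $\phi(F_*^e 1)=1$ is available, and then run the same core computation ($\phi(F_*^e(r^{p^e}))=r\phi(F_*^e 1)$) with no further use of primaryness. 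Both are valid and non-circular (the lemmas you cite precede this proposition and do not depend on it). The paper's version buys independence from the localization machinery and isolates exactly where primaryness enters (turning $\psi(F_*^e 1)\notin P$ and $r\notin Q$ into $r\psi(F_*^e 1)\notin Q$); your version buys a cleaner local statement --- indeed your final step proves, for any $F$-pure pair and \emph{any} ideal $J$, that $\Ccore(J)\subseteq J$, which is essentially Corollary~\ref{c-ideal-in-original-ideal} obtained without passing through a primary decomposition. One small quibble: for Lemma~\ref{localization} you only need $W\cap P=\emptyset$, which is immediate for $W=R\setminus P$; the remark about $\Ass(Q)$ is harmless but belongs to Theorem~\ref{thm:localization}.
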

\begin{proof}
Since $\Ccore_{\mc D}(P)$ is proper, there is some $e>0$ and $\psi \in \mc D_e$ with $\psi(F_*^e 1)\notin P$.
Consider $r\notin Q$ and the map $\psi \circ (F_*^e (r^{p^e-1}))= \psi \cdot r^{p^e-1}$ in $\mc D_e$. Then by $P$-primaryness, 
\[
r\psi(F_* 1) = \psi(F_*^e r^{p^e}) = (\psi\cdot r^{p^e-1})(F_*^e r)\notin Q,
\]
and so $r\notin \Ccore_{\mc D}(Q)$ as desired.
%
\end{proof}

\begin{cor}
\label{c-map-well-defined}
Let $(R, \mc D)$ be an $F$-finite pair, with $F$-pure locus $\mc U_{\mc D}$. Then the Cartier core construction induces a well-defined map $\Ccore_{\mc D}:\mc U_{\mc D}\to \mc U_{\mc D}$. 
\end{cor}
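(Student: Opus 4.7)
The goal is to show two things for each $P\in \mc U_{\mc D}$: first, that $\Ccore_{\mc D}(P)$ is actually a prime ideal of $R$ (so the map lands in $\Spec R$), and second, that this prime itself lies in the $F$-pure locus $\mc U_{\mc D}$. Both statements can be assembled quickly from results already established in the paper.

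The plan is to start with the first requirement. Since $P\in \mc U_{\mc D}$, the localization $(R_P, \mc D_P)$ is $F$-pure, so \Cref{f-split-proper} gives that $\Ccore_{\mc D}(P)$ is a proper ideal. Then \Cref{proper-prime} immediately yields that $\Ccore_{\mc D}(P)$ is prime, which handles well-definedness as a set map into $\Spec R$.

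For the second requirement, I would first observe that $\Ccore_{\mc D}(P)\subseteq P$: this is exactly \Cref{prop:c-ideal-in-primary-original-ideal} applied to the $P$-primary ideal $Q=P$, using that $\Ccore_{\mc D}(P)$ is proper. So $\Ccore_{\mc D}(P)$ is a generalization of $P$ in $\Spec R$. Since \Cref{f-pure-open-local} shows $\mc U_{\mc D}$ is open, and open subsets of $\Spec R$ are closed under generalization (any basic open $D(f)$ containing $P$ also contains every prime contained in $P$), we conclude $\Ccore_{\mc D}(P)\in \mc U_{\mc D}$.

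There is no serious obstacle here: the work is really just a bookkeeping combination of \Cref{f-split-proper}, \Cref{proper-prime}, \Cref{prop:c-ideal-in-primary-original-ideal}, and the openness of $\mc U_{\mc D}$ from \Cref{f-pure-open-local}. The only point that deserves a brief justification in the write-up is the claim that open subsets of $\Spec R$ are stable under generalization, which one can either verify by the basic open argument above or, equivalently, by noting that if $(R_P,\mc D_P)$ is $F$-pure via some $\psi\in (\mc D_P)_e$ with $\psi(F_*^e 1)=1$, then the same $\psi$, pushed forward along the further localization $R_P\to R_{\Ccore_{\mc D}(P)}$, still splits $F^e$ in $(\mc D_{\Ccore_{\mc D}(P)})_e$.
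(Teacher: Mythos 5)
Your proposal is correct and follows essentially the same route as the paper's proof: properness from \Cref{f-split-proper}, primality from \Cref{proper-prime}, containment $\Ccore_{\mc D}(P)\subseteq P$ from \Cref{prop:c-ideal-in-primary-original-ideal}, and then openness of $\mc U_{\mc D}$ to conclude. Your explicit justification that open subsets of $\Spec R$ are stable under generalization is a welcome elaboration of a step the paper leaves implicit, but it is not a different argument.
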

\begin{proof}
Let $P$ be a prime ideal in $\mc U_{\mc D}$. Then $(R_P,\mc D_P)$ is Frobenius split, so \Cref{f-split-proper} gives that $\Ccore_{\mc D}$ is proper, and thus prime by \Cref{proper-prime}. This gives a map $\Ccore_{\mc D}:\mc U_{\mc D} \to \Spec R$.

Then \Cref{prop:c-ideal-in-primary-original-ideal} says $\Ccore_{\mc D}(P) \subseteq P$. Since the $F$-pure locus is open, this means $\Ccore_{\mc D}(P)$ must also be in the $F$-pure locus.
\end{proof}

\begin{cor}[{Cf. \cite[Cor.~4.8]{Schwede.10a}}]
\label{c-minl-prime}
Suppose the pair~$(R, \mc D)$ is $F$-finite and $F$-pure. If $P$ is a minimal prime of $R$, then $\Ccore_{\mc D}(P) = P$.
\end{cor}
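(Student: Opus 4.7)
The plan is to prove the two containments $\Ccore_{\mc D}(P)\subseteq P$ and $P\subseteq \Ccore_{\mc D}(P)$ separately, leveraging results already established in the excerpt.

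For the containment $\Ccore_{\mc D}(P)\subseteq P$, I would apply \Cref{prop:c-ideal-in-primary-original-ideal} with $Q = P$, noting that $P$ is tautologically $P$-primary. That proposition requires knowing $\Ccore_{\mc D}(P)$ is proper, and by \Cref{f-split-proper}, this is equivalent to $(R_P,\mc D_P)$ being $F$-pure. Since $(R,\mc D)$ is $F$-pure globally by hypothesis, \Cref{f-pure-open-local} immediately supplies $F$-purity of every localization, so this step is essentially bookkeeping.

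For the reverse containment $P\subseteq \Ccore_{\mc D}(P)$, I would fix $a\in P$ and $\phi\in\mc D_e$ and show $\phi(F_*^e a)\in P$. Localizing at $P$ reduces this to $\phi_P(F_*^e (a/1)) \in PR_P$. Here the hypothesis that $P$ is a minimal prime is crucial: $PR_P$ is then the unique minimal prime of $R_P$ and hence equals the nilradical, so $a/1$ is nilpotent in $R_P$. If $\phi_P(F_*^e(a/1))$ were not in $PR_P$, it would be a unit $u\in R_P$, and then $u^{-1}\cdot\phi_P\in(\mc D_P)_e$ would be a splitting of the pair $(R_P,\mc D_P)$ along the nilpotent element $a/1$.

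The main obstacle is ruling out such a splitting along a nilpotent, which I would handle by the standard iteration trick. Given $\psi\in(\mc D_P)_e$ with $\psi(F_*^e(a/1))=1$, a direct induction using the Cartier product and the identity $r^{p^e}\cdot F_*^e s = F_*^e(r^{p^e}s)$ yields $\psi^n \in (\mc D_P)_{ne}$ with
\[
\psi^n\bigl(F_*^{ne}(a/1)^{1+p^e+p^{2e}+\cdots+p^{(n-1)e}}\bigr) = 1
\]
for every $n\geq 1$. Choosing $n$ large enough that the exponent exceeds the nilpotence index of $a/1$ forces the left-hand side to equal $0$, contradicting $0=1$. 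This contradiction completes the argument and, combined with the first paragraph, gives $\Ccore_{\mc D}(P)=P$.
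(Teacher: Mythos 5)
Your proof is correct, but the second half takes a genuinely different route from the paper. For the containment $\Ccore_{\mc D}(P)\subseteq P$ you and the paper agree (the paper invokes the same chain through \Cref{f-split-proper} and \Cref{prop:c-ideal-in-primary-original-ideal}, packaged as \Cref{c-ideal-in-original-ideal}). For equality, however, the paper never proves $P\subseteq \Ccore_{\mc D}(P)$ directly: it instead cites \Cref{proper-prime} to conclude that $\Ccore_{\mc D}(P)$ is itself a prime ideal, and then minimality of $P$ forces a prime contained in $P$ to equal $P$. That argument is two lines but leans on the nontrivial fact that the Cartier core of a prime is prime. Your route avoids \Cref{proper-prime} entirely: you observe that for $P$ minimal, $PR_P$ is the nilradical of $R_P$, and then rule out a splitting along a nilpotent by the standard iteration $\psi^n\bigl(F_*^{ne}(c^{1+p^e+\cdots+p^{(n-1)e}})\bigr)=1$, which collapses to $0=1$ once the exponent passes the nilpotence index. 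The identity and the membership $\psi^n\in(\mc D_P)_{ne}$, $u^{-1}\cdot\phi_P\in(\mc D_P)_e$ are all justified by the closure properties of a Cartier subalgebra, so the argument is sound. What your approach buys is a more elementary and self-contained proof (no primality of the splitting prime needed), and it isolates the real mechanism: no pair can be $F$-pure along a nilpotent element. What the paper's approach buys is brevity and reuse of machinery already established for \Cref{c-map-summary-thm}.
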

\begin{proof}
Since $(R,\mc D)$ is $F$-pure, $\Ccore_{\mc D}(P)\subseteq P$. Since $\Ccore_{\mc D}(P)$ is prime by \Cref{proper-prime} and $P$ is minimal, we must have that $\Ccore_{\mc D}(P)=P$.
\end{proof}

\begin{cor}[{Cf. \cite[Prop.~4.5]{Badilla-Cespedes.21}}]
\label{c-ideal-in-original-ideal}
If the pair~$(R,\mc D)$ is $F$-finite and $F$-pure, then for any ideal $J$ we have $\Ccore_{\mc D}(J)\subseteq J$. 
\end{cor}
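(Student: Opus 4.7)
The plan is to show every element $r \in \Ccore_{\mc D}(J)$ lies in $J$ by constructing, from an $F$-splitting of $\mc D$, a map in $\mc D$ that sends $F_*^e r$ back to $r$ itself; then the defining condition of $\Ccore_{\mc D}(J)$ immediately forces $r \in J$.

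First I would use $F$-purity of the pair $(R,\mc D)$ to fix some $e > 0$ and some $\phi_0 \in \mc D_e$ with $\phi_0(F_*^e 1) = 1$. Next, given an arbitrary $r \in R$, I would consider the Cartier-algebra product $\phi := \phi_0 \cdot r^{p^e - 1}$. Since the element $r^{p^e-1}$ sits in degree zero $(\mc D)_0 = R$ and $\phi_0 \in \mc D_e$, the product $\phi$ lies in $\mc D_e$ by definition of Cartier subalgebra. Unwinding the multiplication formula~\eqref{eq:cartier-mult}, we have $\phi(F_*^e s) = \phi_0(F_*^e(r^{p^e-1} s))$ for any $s \in R$.

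Specializing to $s = r$ and using $R$-linearity of $\phi_0$ together with the identity $F_*^e(r^{p^e}) = r \cdot F_*^e 1$, I compute
\[
\phi(F_*^e r) = \phi_0\bigl(F_*^e(r^{p^e})\bigr) = r\,\phi_0(F_*^e 1) = r.
\]
Now if $r \in \Ccore_{\mc D}(J)$, then by the very definition of the Cartier core, $\phi(F_*^e r) \in J$ for the specific $\phi \in \mc D_e$ just constructed. Combining the two equalities yields $r = \phi(F_*^e r) \in J$, which gives the claimed containment.

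The only genuinely nontrivial step is the construction of a map in $\mc D_e$ sending $F_*^e r$ to $r$, and this is mild: it reuses the same twisting-by-a-power-of-$r$ trick already employed in the proof of \Cref{proper-prime}. There is no obstacle with $\mc D$ being only a subalgebra, because the twisted map $\phi_0 \cdot r^{p^e-1}$ stays inside $\mc D$ as soon as $\phi_0$ does.
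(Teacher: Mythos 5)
Your proof is correct, but it takes a genuinely different route from the paper's. The paper deduces the corollary from \Cref{prop:c-ideal-in-primary-original-ideal}: it writes $J = Q_1 \cap \cdots \cap Q_t$ as an intersection of primary ideals, notes via \Cref{arbitrary-intersection-c-ideal} that $\Ccore_{\mc D}$ commutes with this intersection, and applies the primary-ideal statement to each component (which only needs $\Ccore_{\mc D}(\sqrt{Q_i})$ to be proper, i.e., a unit value $\psi(F_*^e 1) \notin P$, together with $P$-primaryness to conclude $r\psi(F_*^e 1) \notin Q_i$). You instead use the \emph{global} splitting $\phi_0(F_*^e 1) = 1$ guaranteed by $F$-purity of the pair to build $\phi = \phi_0 \cdot r^{p^e-1} \in \mc D_e$ with $\phi(F_*^e r) = r$ exactly, so membership of $r$ in $\Ccore_{\mc D}(J)$ forces $r \in J$ directly, with no primary decomposition. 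The twist $\phi_0 \cdot r^{p^e-1}$ does stay in $\mc D_e$ since $r^{p^e-1} \in \mc D_0 = R$ and $\mc D$ is a subalgebra, exactly as in the paper's own use of this trick in \Cref{proper-prime} and \Cref{prop:c-ideal-in-primary-original-ideal}. Your argument is shorter and makes the role of the $F$-purity hypothesis transparent; what the paper's detour buys is the more local \Cref{prop:c-ideal-in-primary-original-ideal}, which is reused elsewhere (e.g., in \Cref{c-map-well-defined}) where one only has $F$-purity at a single prime rather than a global splitting.
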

\begin{proof}
Write $J = Q_1\cap \cdots \cap Q_t$, where the $Q_i$ give a primary decomposition of $J$. Then by \Cref{arbitrary-intersection-c-ideal},
\[
\Ccore_{\mc D}(J) = \Ccore_{\mc D}(Q_1)\cap \cdots \cap \Ccore_{\mc D}(Q_t).
\]
Since $(R,\mc D)$ is Frobenius split, for every prime $P$ the pair~$(R_P, \mc D_P)$ is also Frobenius split, and thus has $\Ccore_{\mc D}(P)$ proper by \Cref{f-split-proper}. By \Cref{prop:c-ideal-in-primary-original-ideal}, each $\Ccore_{\mc D}(Q_i)\subseteq Q_i$. Intersecting, we get that $\Ccore_{\mc D}(J)\subseteq J$ as desired.
\end{proof}

\begin{prop}[{Cf. \cite[Lemma~3.5]{Schwede.10a}}]
\label{prop:arbitrary-sum-c-ideals}
Let $(R,\mc D)$ be an $F$-finite, $F$-pure pair, and let $\{J_\alpha\}_{\alpha \in \mc A}$ be a collection of ideals with $\Ccore_{\mc D}(J_\alpha)=J_\alpha$ for all $\alpha\in \mc A$. Then we have
\begin{align*}
\Ccore_{\mc D}\left(\sum_\alpha J_\alpha\right) &= \sum_\alpha \Ccore_{\mc D}(J_\alpha)
\end{align*}
\end{prop}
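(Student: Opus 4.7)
The plan is to exploit the characterization stated in the excerpt that $\Ccore_{\mc D}(J) = J$ is equivalent to $J$ being $\mc D$-compatible (i.e.\ $\phi(F_*^e J) \subseteq J$ for every $e > 0$ and every $\phi \in \mc D_e$). Under this translation, the hypothesis says each $J_\alpha$ is $\mc D$-compatible, and the conclusion amounts to two statements: (i) the sum $\sum_\alpha J_\alpha$ is itself $\mc D$-compatible, and (ii) the Cartier core of the sum equals the sum.

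First I would dispense with the easy containment $\sum_\alpha \Ccore_{\mc D}(J_\alpha) \subseteq \Ccore_{\mc D}(\sum_\alpha J_\alpha)$. Since $J_\beta \subseteq \sum_\alpha J_\alpha$ for every $\beta$, Proposition \ref{containment} gives $\Ccore_{\mc D}(J_\beta) \subseteq \Ccore_{\mc D}(\sum_\alpha J_\alpha)$, and summing over $\beta$ yields the desired inclusion. This direction uses neither $F$-purity nor the hypothesis that $J_\alpha$ is fixed by $\Ccore_{\mc D}$.

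For the reverse containment, I would verify that $\sum_\alpha J_\alpha$ is $\mc D$-compatible by a direct check: any element of $\sum_\alpha J_\alpha$ is a finite sum $\sum_i r_i$ with $r_i \in J_{\alpha_i}$, so for any $\phi \in \mc D_e$, additivity of $\phi$ combined with the $\mc D$-compatibility of each $J_{\alpha_i}$ gives $\phi(F_*^e \sum_i r_i) = \sum_i \phi(F_*^e r_i) \in \sum_\alpha J_\alpha$. By the equivalent characterization, this shows $\sum_\alpha J_\alpha \subseteq \Ccore_{\mc D}(\sum_\alpha J_\alpha)$. Finally, since $(R, \mc D)$ is $F$-pure, Corollary \ref{c-ideal-in-original-ideal} gives the opposite containment $\Ccore_{\mc D}(\sum_\alpha J_\alpha) \subseteq \sum_\alpha J_\alpha$. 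Combining, $\Ccore_{\mc D}(\sum_\alpha J_\alpha) = \sum_\alpha J_\alpha = \sum_\alpha \Ccore_{\mc D}(J_\alpha)$, where the last equality simply restates the hypothesis.

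There is no real obstacle here: the proof is essentially a two-line verification once one notes that $\Ccore_{\mc D}(J_\alpha) = J_\alpha$ is the same as $\mc D$-compatibility. The $F$-purity hypothesis enters only through Corollary \ref{c-ideal-in-original-ideal} to ensure $\Ccore_{\mc D}(J) \subseteq J$ in general; without $F$-purity this containment can fail and the statement would not hold.
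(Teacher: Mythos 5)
Your proof is correct and follows essentially the same route as the paper: the easy inclusion via \Cref{containment}, and the reverse via \Cref{c-ideal-in-original-ideal} together with the hypothesis $\Ccore_{\mc D}(J_\alpha)=J_\alpha$. Your intermediate verification that $\sum_\alpha J_\alpha$ is $\mc D$-compatible is a harmless extra step (the paper gets the reverse inclusion directly from $\Ccore_{\mc D}\bigl(\sum_\alpha \Ccore_{\mc D}(J_\alpha)\bigr)\subseteq \sum_\alpha \Ccore_{\mc D}(J_\alpha)$), but it does not change the substance of the argument.
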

\begin{proof}
Since $J_\beta\subseteq \sum J_\alpha$, we have $\Ccore_{\mc D}(J_\beta)\subseteq \Ccore_{\mc D}\left(\sum_\alpha J_\alpha\right)$ for all $\beta\in \mc A$ by \Cref{containment}, and so 
\[
\sum_\alpha \Ccore_{\mc D}(J_\alpha)\subseteq \Ccore_{\mc D}\left(\sum_\alpha J_\alpha\right).
\]

For the reverse containment, we use our assumption that $\Ccore_{\mc D}(J_\alpha)=J_\alpha$ and \Cref{c-ideal-in-original-ideal} to see that
\[
\Ccore_{\mc D}\left(\sum J_\alpha\right)  
=  \Ccore_{\mc D}\left(\sum \Ccore_{\mc D}(J_\alpha)\right) 
\subseteq \sum \Ccore_{\mc D}(J_\alpha)
\]
which is our desired opposite inclusion.
\end{proof}

\begin{prop}
\label{prop:c-ideal-stable}
If the pair~$(R,\mc D)$ is $F$-finite and $F$-pure, then for any ideal $J$ in $R$,
\[
\Ccore_{\mc D}(J) = \Ccore_{\mc D}\left(\Ccore_{\mc D}(J)\right).
\]
\end{prop}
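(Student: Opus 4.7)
The plan is to establish the equality via two containments, one of which is immediate from an earlier corollary, and the other of which follows from the multiplicative structure on $\mc D$.

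For the containment $\Ccore_{\mc D}(\Ccore_{\mc D}(J)) \subseteq \Ccore_{\mc D}(J)$, I apply \Cref{c-ideal-in-original-ideal} to the ideal $\Ccore_{\mc D}(J)$ in place of $J$; since $(R,\mc D)$ is $F$-pure, this gives $\Ccore_{\mc D}(I) \subseteq I$ for the ideal $I = \Ccore_{\mc D}(J)$, which is exactly what is needed. This is the step that uses the $F$-pure hypothesis.

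For the reverse containment $\Ccore_{\mc D}(J) \subseteq \Ccore_{\mc D}(\Ccore_{\mc D}(J))$ (which in fact holds for any $F$-finite pair), I would argue as follows. Fix $r \in \Ccore_{\mc D}(J)$; I want to show that for every $e > 0$ and every $\phi \in \mc D_e$, the element $\phi(F_*^e r)$ lies in $\Ccore_{\mc D}(J)$, i.e., that for every $e' > 0$ and every $\psi \in \mc D_{e'}$ we have $\psi(F_*^{e'}(\phi(F_*^e r))) \in J$. The key observation is that, by the definition of multiplication in the Cartier algebra (see \Cref{eq:cartier-mult}), the composition $\psi \circ F_*^{e'}\phi$ equals $\psi \cdot \phi$ and lies in $\mc D_{e+e'}$. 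Therefore
\[
\psi(F_*^{e'}(\phi(F_*^e r))) = (\psi \cdot \phi)(F_*^{e+e'} r) \in J,
\]
where the last inclusion is by the assumption $r \in \Ccore_{\mc D}(J)$. Hence $\phi(F_*^e r) \in \Ccore_{\mc D}(J)$, and since $e, \phi$ were arbitrary, $r \in \Ccore_{\mc D}(\Ccore_{\mc D}(J))$.

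There is no significant obstacle here: the proof is essentially a combination of \Cref{c-ideal-in-original-ideal} (which already encodes the nontrivial $F$-pure input) with the closure of $\mc D$ under the product in \Cref{eq:cartier-mult}. If anything, the subtle point is simply noticing that the second containment is a formal consequence of $\mc D$ being a subalgebra and does not require $F$-purity at all, so that the statement can be rephrased as saying $\Ccore_{\mc D}(J)$ is itself $\mc D$-compatible.
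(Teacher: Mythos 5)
Your proof is correct and follows essentially the same route as the paper's: the containment $\Ccore_{\mc D}(\Ccore_{\mc D}(J)) \subseteq \Ccore_{\mc D}(J)$ via \Cref{c-ideal-in-original-ideal}, and the reverse containment via the Cartier algebra multiplication $\psi \cdot \phi = \psi \circ F_*^{e'}\phi \in \mc D_{e+e'}$. The only cosmetic differences are that the paper phrases the second containment contrapositively and obtains the first by applying \Cref{c-ideal-in-original-ideal} to $J$ followed by \Cref{containment}, rather than applying it directly to $\Ccore_{\mc D}(J)$; your observation that the second containment needs no $F$-purity is also implicit in the paper's argument.
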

\begin{proof}
By \Cref{c-ideal-in-original-ideal}, we know that $\Ccore_{\mc D}(J)\subseteq J$.
Then $\Ccore_{\mc D}\left(\Ccore_{\mc D}(J)\right)\subseteq \Ccore_{\mc D}(J)$ by \Cref{containment}, so it suffices to show the other direction. 

Consider $f\notin \Ccore_{\mc D}\left(\Ccore_{\mc D}(J)\right)$. Thus there exists $e>0$ and $\phi\in \mc D_e$ with $\phi(F_*^e f)\notin \Ccore_{\mc D}(J)$. Then there must also exist $e'$ and $\phi'\in \mc D_{e'}$ with $\phi'(F_*^{e'} \phi(F_*^e(f))\notin J$. This term can be rewritten as $(\phi'\cdot \phi)(F_*^{e'+e}(f)) = \phi'\left(F_*^{e'}(\phi(F_*^e f))\right)$, and so $f\notin \Ccore_{\mc D}(J)$.
\end{proof}

\begin{rmk}
If the pair $(R,\mc D)$ is $F$-finite and $F$-pure,  then combining \Cref{c-ideal-in-original-ideal}, \Cref{containment}, and \Cref{prop:c-ideal-stable}, shows that $\Ccore_{\mc D}$ is a relative interior operation on ideals of $R$, in the sense of Epstein, R.G., and Vassilev \cite[Def.~2.2]{Epstein+etal.21}. 
\end{rmk}

The following result is known  when $\mc D = \mc C_R$ \cite{Badilla-Cespedes.21}, and  for triples $(R,\Delta, \mf a^t)$ \cite{Schwede.10a}. The proof in the Cartier algebra setting proceeds the same as Badilla-C\'espedes' proof, with a little care needed for the exponents used.
\begin{prop}[{Cf. \cite[Rmk.~4.14]{Badilla-Cespedes.21}, \cite[Cor.~3.3]{Schwede.10a}}]
\label{c-ideal-radical}
If the pair~$(R,\mc D)$ is $F$-finite and $F$-pure, then for any ideal $J$, the Cartier core $\Ccore_{\mc D}(J)$ is radical.
\end{prop}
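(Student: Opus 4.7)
The plan is to prove the nontrivial inclusion $\sqrt{\Ccore_{\mc D}(J)} \subseteq \Ccore_{\mc D}(J)$. Starting from $r^n \in \Ccore_{\mc D}(J)$ for some $n \geq 1$, I would first reduce to the case where the exponent is a $p$-power: since $\Ccore_{\mc D}(J)$ is an ideal (already noted in this section) and $r^{p^e} = r^{p^e - n}\cdot r^n$ whenever $p^e \geq n$, we automatically have $r^{p^e} \in \Ccore_{\mc D}(J)$ for all such $e$. Passing to a $p$-power is what will allow us to absorb the extra factors of $r$ into a Frobenius pushforward in the next step.

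The second step leverages $F$-purity to recover $r$ itself. Fix any $d > 0$ and any $\phi \in \mc D_d$; the goal is $\phi(F_*^d r) \in J$. By $F$-purity there exists some splitting $\psi_0 \in \mc D_{e_0}$ with $\psi_0(F_*^{e_0} 1) = 1$, and as observed just after \Cref{cartier-mod-defs}, its iterates give splittings $\psi_0^k \in \mc D_{k e_0}$ in every degree that is a multiple of $e_0$. Choose $k$ large enough that $e := ke_0$ satisfies $p^e \geq n$, and set $\psi := \psi_0^k \in \mc D_e$. The Cartier multiplication formula \Cref{eq:cartier-mult} together with $R$-linearity of $\psi$ then gives the key identity
\[
(\phi\cdot \psi)(F_*^{d+e}(r^{p^e})) = \phi\bigl(F_*^d \psi(F_*^e(r^{p^e}))\bigr) = \phi\bigl(F_*^d(r\cdot \psi(F_*^e 1))\bigr) = \phi(F_*^d r).
\]
Since $\phi\cdot \psi \in \mc D_{d+e}$ and $r^{p^e} \in \Ccore_{\mc D}(J)$, the left-hand side lies in $J$, hence so does $\phi(F_*^d r)$. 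As $d$ and $\phi$ were arbitrary, $r \in \Ccore_{\mc D}(J)$.

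I do not anticipate a genuine obstacle here, as this mirrors Badilla-C\'espedes' argument in the full Cartier algebra case. The only real care required is the bookkeeping around the degree of the splitting: $\psi$ must live in some $\mc D_e$ for which $p^e \geq n$, which is handled by iterating any given $F$-splitting. The conceptual crux is the displayed identity, which trades the monomial $r^{p^e}$ for a bare $r$ by pulling $r$ out of $F_*^e$ across the splitting $\psi$ and then composing with an arbitrary $\phi$.
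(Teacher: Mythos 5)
Your proof is correct and follows essentially the same route as the paper: both hinge on choosing a splitting $\psi \in \mc D_e$ in a degree $e$ with $p^e \geq n$ and on the identity $\psi(F_*^e(r^{p^e})) = r\,\psi(F_*^e 1) = r$. The only cosmetic difference is that the paper applies the splitting to $r^{p^e}\in \Ccore_{\mc D}(J)=\Ccore_{\mc D}(\Ccore_{\mc D}(J))$ and cites \Cref{prop:c-ideal-stable} to conclude, whereas you unfold that step by composing with an arbitrary $\phi\in\mc D_d$ and appealing directly to the definition of the Cartier core.
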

\begin{proof}
Suppose $r\in \sqrt{\Ccore_{\mc D}(J)}$. Then there exists some $n$ so that $r^{p^n}\in \Ccore_{\mc D}(J)$. Since the pair is $F$-pure, there also exists some $\psi \in \mc D_d$ so that $\psi(F_*^d1)=1$. Take $e = nd$,
so that there is $\phi \in \mc D_e$ with $\phi(F_*^e 1)=1$, and so that \Cref{prop:c-ideal-stable} gives $r^{p^e}\in \Ccore_{\mc D}(J) = \Ccore_{\mc D}(\Ccore_{\mc D}(J))$. Then
\[
\phi(F_*^e(r^{p^e})) = r\phi(F_*^e 1) = r \in \Ccore_{\mc D}(J).\qedhere
\]
\end{proof}

The hypothesis that $(R,\mc D)$ be $F$-pure is necessary. Consider $R=k[x]/\langle x^2\rangle$ where $k$ is an $F$-finite field, and let $\mc D =\mc C_R$. This ring $R$ is non-reduced, so can't be $F$-pure. For any ideal $J\subset k[x]$, use $\olin J$ to denote the image of $J$ in $R$. Now using the presentation from \Cref{c-ideal-quo-reg-ring}, we compute
\begin{align*}
A_e(\olin{\langle x^2\rangle}) &= \olin{\langle x^2 \rangle^{[p^e]}:_{k[x]}\left(\langle x^2\rangle ^{[p^e]}:_{k[x]}\langle x^2 \rangle\right) }
= \olin{ \langle x^{2p^e} \rangle :_{k[x]} \langle x^{2p^e-2}\rangle } 
= \olin{\langle x^{2}\rangle}.
\end{align*}
Intersecting over all $e$, we see that $\Ccore_R(\olin{\langle x^2\rangle}) =\olin{\langle x^2\rangle}$, a non-radical ideal.

\begin{thm}[{Cf. \cite[Prop.~4.9, Thm.~4.10]{Badilla-Cespedes.21}}]
\label{cartier-core=D-compatible}
If the pair~$(R,\mc D)$ is $F$-finite and $F$-pure, then the set of Cartier cores with respect to $\mc D$, i.e., the set \(\left\{ \Ccore_{\mc D}(J) \: | \: J\text{ an ideal of \(R\)} \right\}\), is precisely the set of $\mc D$-compatible ideals.
\end{thm}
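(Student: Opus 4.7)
The plan is to establish the two containments of sets. We need to show that (a) every Cartier core $\Ccore_{\mc D}(J)$ is $\mc D$-compatible, and (b) every $\mc D$-compatible ideal $I$ arises as a Cartier core, namely $I = \Ccore_{\mc D}(I)$.

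For (a), recall the paragraph preceding \Cref{f-split-proper}, which explains that an ideal $K$ is $\mc D$-compatible exactly when $K \subseteq \Ccore_{\mc D}(K)$. Applying this observation to $K = \Ccore_{\mc D}(J)$, it suffices to show $\Ccore_{\mc D}(J) \subseteq \Ccore_{\mc D}(\Ccore_{\mc D}(J))$. This is immediate from \Cref{prop:c-ideal-stable}, which gives the stronger equality $\Ccore_{\mc D}(J) = \Ccore_{\mc D}(\Ccore_{\mc D}(J))$.

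For (b), suppose $I$ is $\mc D$-compatible. Then by the same characterization, $I \subseteq \Ccore_{\mc D}(I)$. On the other hand, since $(R, \mc D)$ is $F$-pure, \Cref{c-ideal-in-original-ideal} gives the reverse containment $\Ccore_{\mc D}(I) \subseteq I$. Combining these yields $I = \Ccore_{\mc D}(I)$, exhibiting $I$ as the Cartier core of itself.

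There is no real obstacle here: the statement is a formal consequence of three results already proved, namely the definition-level equivalence between $\mc D$-compatibility and the inclusion $J \subseteq \Ccore_{\mc D}(J)$, the stability result \Cref{prop:c-ideal-stable}, and the contraction result \Cref{c-ideal-in-original-ideal} which requires the $F$-purity hypothesis. The proof can therefore be presented in just a few lines.
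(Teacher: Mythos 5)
Your proposal is correct and is essentially identical to the paper's own proof: both directions rest on the same three ingredients (the characterization of $\mc D$-compatibility as $J\subseteq \Ccore_{\mc D}(J)$, \Cref{prop:c-ideal-stable}, and \Cref{c-ideal-in-original-ideal}). Nothing to add.
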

\begin{proof}
An ideal $J$ is $\mc D$-compatible precisely if $\phi(F_*^e(J))\subseteq J$ for all $e$ and for all $\phi \in \mc D_e$, and thus by construction $J$ is $\mc D$-compatible if and only if $J\subseteq \Ccore_{\mc D}(J)$. 
By \Cref{c-ideal-in-original-ideal}, if the pair~$(R,\mc D)$ is $F$-pure then this is equivalent to having $J=\Ccore_{\mc D}(J)$. This shows that every $\mc D$-compatible ideal is a Cartier core.

Conversely, the Cartier core $\Ccore_{\mc D}(J)$ is $\mc D$-compatible since by \Cref{prop:c-ideal-stable} we have $\Ccore_{\mc D}(J) = \Ccore_{\mc D}(\Ccore_{\mc D}(J))$.
\end{proof}

\begin{cor}[{Cf. \cite[Prop.~4.11]{Badilla-Cespedes.21}}]
\label{largest-D-compat}
If the pair~$(R,\mc D)$ is $F$-finite and $F$-pure and $J$ is an ideal of $R$, then $\Ccore_{\mc D}(J)$ is the largest $\mc D$-compatible ideal contained in $J$.
\end{cor}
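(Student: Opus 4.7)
The proof decomposes into two parts: first showing $\Ccore_{\mc D}(J)$ is itself a $\mc D$-compatible ideal contained in $J$, and second showing it contains every other such ideal.

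For the first part, I would invoke \Cref{c-ideal-in-original-ideal} to get $\Ccore_{\mc D}(J)\subseteq J$, using that $(R,\mc D)$ is $F$-pure. Then \Cref{cartier-core=D-compatible} (or equivalently \Cref{prop:c-ideal-stable}) tells us that $\Ccore_{\mc D}(J)$ is itself $\mc D$-compatible, since $\Ccore_{\mc D}(J) = \Ccore_{\mc D}(\Ccore_{\mc D}(J))$ means $\Ccore_{\mc D}(J)\subseteq \Ccore_{\mc D}(\Ccore_{\mc D}(J))$, which is the defining property of $\mc D$-compatibility.

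For the second part, I would take an arbitrary $\mc D$-compatible ideal $K\subseteq J$ and show $K\subseteq \Ccore_{\mc D}(J)$. By \Cref{containment}, $\Ccore_{\mc D}(K)\subseteq \Ccore_{\mc D}(J)$. Since $K$ is $\mc D$-compatible we have $K\subseteq \Ccore_{\mc D}(K)$ by the characterization used in the proof of \Cref{cartier-core=D-compatible}. Chaining these gives $K\subseteq \Ccore_{\mc D}(K)\subseteq \Ccore_{\mc D}(J)$, which is the desired inclusion.

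There is no real obstacle here; the corollary is essentially a direct packaging of \Cref{c-ideal-in-original-ideal}, \Cref{containment}, and \Cref{cartier-core=D-compatible}. The $F$-pure hypothesis enters only through \Cref{c-ideal-in-original-ideal}, which is what ensures $\Ccore_{\mc D}(J)$ sits inside $J$ to begin with; without it the statement would fail, as illustrated by the $k[x]/\langle x^2\rangle$ example following \Cref{c-ideal-radical}.
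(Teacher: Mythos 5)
Your proof is correct, and the first half (that $\Ccore_{\mc D}(J)$ is a $\mc D$-compatible ideal contained in $J$, via \Cref{c-ideal-in-original-ideal} and \Cref{prop:c-ideal-stable}) matches the paper exactly. The second half takes a genuinely different, and in fact cleaner, route. You take an \emph{arbitrary} $\mc D$-compatible $K\subseteq J$ and chain $K\subseteq \Ccore_{\mc D}(K)\subseteq \Ccore_{\mc D}(J)$, where the first inclusion is the definition of $\mc D$-compatibility and the second is \Cref{containment}; this directly establishes that $\Ccore_{\mc D}(J)$ is the maximum of the family. The paper instead considers only $\mc D$-compatible ideals $J'$ sandwiched as $\Ccore_{\mc D}(J)\subseteq J'\subseteq J$ and shows each such $J'$ equals $\Ccore_{\mc D}(J)$ by applying \Cref{containment} twice together with \Cref{prop:c-ideal-stable}. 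Read literally, that argument only proves \emph{maximality} of $\Ccore_{\mc D}(J)$ among $\mc D$-compatible ideals inside $J$; upgrading maximality to "largest" requires an extra observation (e.g., replacing a competitor $K$ by $K+\Ccore_{\mc D}(J)$ and invoking closure of compatible ideals under sums, as in \Cref{prop:arbitrary-sum-c-ideals}). Your version sidesteps that subtlety entirely, at no extra cost, so it is arguably the preferable write-up.
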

\begin{proof}
$\Ccore_{\mc D}(J)$ is $\mc D$-compatible by the previous result. If another $\mc D$-compatible ideal~$J'$ has $\Ccore_{\mc D}(J) \subseteq J' \subseteq J$, then by \Cref{containment} we have $\Ccore_{\mc D}(\Ccore_{\mc D}(J)) \subseteq \Ccore_{\mc D}(J') \subseteq \Ccore_{\mc D}(J)$, and by \Cref{prop:c-ideal-stable} we in fact have $\Ccore_{\mc D}(J')=J'=\Ccore_{\mc D}(J)$.
\end{proof}

The following result, originally due to Schwede \cite[Cor.~5.10]{Schwede.09} and to Kumar and Mehta \cite[Thm.~1.1]{Kumar+Mehta.09}, captures another nice property of the Cartier core map. Recent work of Datta and Tucker \cite[Prop.~3.4.1]{Datta+Tucker.21} provides an alternate proof that uses similar language to the rest of this paper. 
\begin{prop}[{\cite[Prop.~3.4.1]{Datta+Tucker.21}}]
\label{fin-many-C-cores}
If $(R,\mc D)$ is an $F$-finite, $F$-pure pair, then there are only finitely many Cartier cores with respect to $\mc D$, i.e., there are only finitely many $\mc D$-compatible ideals.
\end{prop}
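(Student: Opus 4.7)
The plan is to reduce the finiteness of $\mc D$-compatible ideals to the classical finiteness of ideals compatible with a single Frobenius splitting.

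Since the pair~$(R,\mc D)$ is $F$-pure, by definition there exist some $e>0$ and some $\phi\in \mc D_e$ with $\phi(F_*^e 1)=1$. I would fix such a $\phi$ once and for all. The first observation is that every $\mc D$-compatible ideal $J$ is in particular $\phi$-compatible: indeed, $\mc D$-compatibility requires $\psi(F_*^{e'}J)\subseteq J$ for \emph{all} $\psi$ in \emph{every} graded piece $\mc D_{e'}$, and specializing to our chosen $\phi\in \mc D_e$ gives $\phi(F_*^e J)\subseteq J$. Thus the collection of $\mc D$-compatible ideals sits inside the collection of ideals compatible with the single splitting $\phi$.

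The second step is to invoke the classical theorem of Enescu--Hochster and Kumano-go (see also Sharp and Schwede--Tucker): if $R$ is a Noetherian $F$-finite ring of prime characteristic and $\phi\colon F_*^e R\to R$ is a surjective $R$-linear map with $\phi(F_*^e 1)=1$, then there are only finitely many ideals $J\subseteq R$ satisfying $\phi(F_*^e J)\subseteq J$. Combining this with the first step, the collection of $\mc D$-compatible ideals, being a subcollection of a finite set, is itself finite. Since \Cref{cartier-core=D-compatible} identifies the $\mc D$-compatible ideals with the Cartier cores with respect to $\mc D$, this concludes the proof.

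The only substantive input is the cited finiteness theorem for single-splitting--compatible ideals, which I would invoke as a black box rather than reprove; everything else is bookkeeping. The one thing to be a bit careful about is that the classical finiteness statement is usually formulated for a splitting of the Frobenius $F\colon R\to F_* R$ itself, while our $\phi$ splits the iterated Frobenius $F^e$. This causes no trouble, since the same proof applies verbatim to any surjective $\phi\colon F_*^e R\to R$ with $\phi(F_*^e 1)=1$; alternatively, one can replace $R$ with itself equipped with the $p^e$-power Frobenius structure and reduce to the $e=1$ case.
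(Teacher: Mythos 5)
Your proof is correct, but it takes a genuinely different route from the paper's. You reduce to a single splitting: since $(R,\mc D)$ is $F$-pure you may fix one $\phi\in\mc D_e$ with $\phi(F_*^e1)=1$, observe that every $\mc D$-compatible ideal is in particular $\phi$-compatible, and then quote the known finiteness of $\phi$-compatible ideals for a single surjective $\phi\colon F_*^eR\to R$ (this is due to Kumar--Mehta, Schwede--Tucker, and Sharp --- note the name is Kumar--Mehta, not ``Kumano-go''). The paper instead works directly with the whole family of Cartier cores: in the local case it verifies that this family consists of radical ideals closed under sum, intersection, and passing to minimal primes, and applies the abstract Enescu--Hochster criterion; it then globalizes by a descending-chain contradiction argument, since Enescu--Hochster is a statement about excellent \emph{local} rings. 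Your approach is shorter and makes clear that the count of $\mc D$-compatible ideals is bounded by the count for any single splitting in $\mc D$ (so quantitative bounds from Schwede--Tucker transfer for free), at the cost of a larger black box; the paper's approach is self-contained modulo the abstract Enescu--Hochster result and reuses closure properties of Cartier cores already proved. The one point you should verify before relying on the black box is that the version of the single-splitting finiteness theorem you cite is stated for arbitrary (non-local) $F$-finite Noetherian rings --- the local-to-global passage is exactly the step that costs the paper its extra descending-chain argument, and several formulations in the literature are local or projective only.
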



\begin{rmk}
If additionally $R$ is local, one can in fact get concrete bounds on the number of $\mc D$-compatible ideals. 
Using Theorem~4.2 of \cite{Schwede+Tucker.10} or the argument from Remark~3.4 of \cite{huneke+Watanabe.15}, the number of prime Cartier cores with respect to $\mc D$ of coheight $d$ is bounded above by $\binom{n}{d}$, where $n$ is the embedding dimension of $R$. 
\end{rmk}

\begin{thm}
\label{cts-map}
Let $(R,\mc D)$ be an $F$-finite pair, and let $\mc U_{\mc D}$ denote the $F$-pure locus of $(R,\mc D)$. Then the map $\Ccore_{\mc D}:\mc U_{\mc D} \to \mc U_{\mc D}$ is continuous under the Zariski topology.
\end{thm}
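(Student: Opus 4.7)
The plan is to check continuity directly from the definition: every basic closed set in $\mc U_{\mc D}$ (with the subspace topology inherited from $\Spec R$) has the form $V(J)\cap \mc U_{\mc D}$ for some ideal $J$ of $R$, so it suffices to show that the preimage
\[
C_{\mc D}^{-1}\bigl(V(J)\cap \mc U_{\mc D}\bigr) = \{P\in \mc U_{\mc D} : J \subseteq C_{\mc D}(P)\}
\]
is closed in $\mc U_{\mc D}$.

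The key step is to unpack the containment $J \subseteq C_{\mc D}(P)$ using the definition of the Cartier core. By definition, $J\subseteq C_{\mc D}(P)$ holds if and only if $\phi(F_*^e j) \in P$ for every $j\in J$, every $e>0$, and every $\phi\in \mc D_e$. I would then package all of these elements into a single ideal of $R$,
\[
K \;=\; \bigl\langle \phi(F_*^e j) : j\in J,\ e>0,\ \phi \in \mc D_e \bigr\rangle,
\]
so that $J \subseteq C_{\mc D}(P)$ is equivalent to $K \subseteq P$, i.e., $P\in V(K)$. Consequently,
\[
C_{\mc D}^{-1}\bigl(V(J)\cap \mc U_{\mc D}\bigr) \;=\; V(K)\cap \mc U_{\mc D},
\]
which is closed in $\mc U_{\mc D}$.

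I do not expect any serious obstacle here; once one records the right ideal $K$, the argument is essentially formal. The only thing to be careful about is the topology on $\mc U_{\mc D}$ (the subspace topology from $\Spec R$, under which closed sets are precisely intersections $V(J)\cap \mc U_{\mc D}$), and the fact that $C_{\mc D}$ really does land in $\mc U_{\mc D}$, which is already furnished by \Cref{c-map-well-defined}. Noetherianity of $R$ plays no role in the continuity itself; it would only intervene if one additionally wanted $K$ to be finitely generated.
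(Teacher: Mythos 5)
Your proof is correct, and it takes a genuinely different route from the paper's. The paper defines $K$ as the intersection of all prime Cartier cores containing $J$, then invokes \Cref{arbitrary-intersection-c-ideal} (Cartier cores are closed under arbitrary intersection, so $K=\Ccore_{\mc D}(K)$), \Cref{prop:c-ideal-in-primary-original-ideal} (to get $\Ccore_{\mc D}(P)\subseteq P$) and \Cref{containment} to verify that $\Ccore_{\mc D}^{-1}(\bb V(J)\cap\mc U_{\mc D})=\bb V(K)\cap \mc U_{\mc D}$. You instead exhibit the cutting-out ideal explicitly as $K=\langle \phi(F_*^ej): j\in J,\ e>0,\ \phi\in\mc D_e\rangle$, and the equivalence $J\subseteq \Ccore_{\mc D}(P)\iff K\subseteq P$ is immediate from the definition of the Cartier core plus the fact that $P$ is an ideal. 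Your argument is more elementary and self-contained: it needs only the definition and \Cref{c-map-well-defined} (so that $\Ccore_{\mc D}(P)$ lands back in $\mc U_{\mc D}$ and membership in the preimage reduces to $\Ccore_{\mc D}(P)\in\bb V(J)$), and it even shows the preimage of $\bb V(J)$ under the map $P\mapsto\Ccore_{\mc D}(P)$ on all of $\Spec R$ is closed, not just on $\mc U_{\mc D}$. What the paper's formulation buys in exchange is structural information: its $K$ is itself a Cartier core, i.e., the preimage of a compatible center is cut out by a compatible ideal, which is in the spirit of the rest of Section 3. You are also right that Noetherianity plays no role in either version of the continuity argument.
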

\begin{proof}
We will show that the inverse image of the closed set $ V = \bb V(J)\cap \mc U_{\mc D}$ is also closed, where $J$ is an ideal of $R$.
Let $K$ be the intersection of all Cartier cores containing $J$ which come from primes, so that
\[
K = \bigcap_{\substack{P \in \mc U_{\mc D}\\ \Ccore_{\mc D}(P)\in \mathbb V(J)}} \Ccore_{\mc D}(P).
\]
Since the set of Cartier cores with respect to $\mc D$ is closed under infinite intersection by \Cref{arbitrary-intersection-c-ideal}, $K = \Ccore_{\mc D}(K)$ is also a Cartier core. We claim that $\Ccore_{\mc D}\inv(V) = \mathbb V(K)\cap \mc U_{\mc D}$. 

Suppose $P\in \Ccore_{\mc D}\inv(V)$. Then since $P\in \mc U_{\mc D}$, we have $\Ccore_{\mc D}(P)\subseteq P$ by \Cref{prop:c-ideal-in-primary-original-ideal}. Since $\Ccore_{\mc D}(P)\in \bb V(J)$, 
we have $K\subseteq  \Ccore_{\mc D}(P)$ by construction.
Thus $K\subseteq P$ and so $\Ccore_{\mc D}\inv(V) \subseteq \bb V(K)\cap \mc U_{\mc D}$. 

Conversely, if $P\in \bb V(K)\cap \mc U_{\mc D}$, then $K\subseteq P$ and by \Cref{containment}, 
\[
J\subseteq K = \Ccore_{\mc D}(K) \subseteq \Ccore_{\mc D}(P).
\]
Thus $\bb V(K)\cap \mc U_{\mc D} \subseteq \Ccore_{\mc D}\inv(V)$.
\end{proof}


\section{Quotients of Regular Rings}
\label{sec:quo-regular-ring}
Now that we have seen some abstract properties of the Cartier core map, $\Ccore_{\mc D}$, we shift our focus to actually computing it.
In this section we give a concrete description of the Cartier core in the case when $R$ is presented as a quotient of a regular ring and $\mc D$ is the full Cartier algebra $\mc C_R$. We will then use this concrete description to show that the Cartier core commutes with adjoining a variable and with homogenization (in the case that our regular ring is a polynomial ring).

One reason to focus on this case is that regularity of $S$ forces $F_*^eS$ and $\hom_S(F_*^eS, S)$ to be well-behaved, as the following result of Kunz and result of Fedder illustrate.

\begin{thm}[{\cite[Cor.~2.7]{Kunz.69}}]
If $R$ is a Noetherian ring of prime characteristic, then $R$ is regular if and only if $F_*R$ is a flat $R$-module. 
\end{thm}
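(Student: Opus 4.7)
The plan is to reduce to the local case (since regularity of $R$ and flatness of $F_*R$ as an $R$-module are both local properties) and then handle each direction separately. So assume $(R, \mathfrak{m}, k)$ is a Noetherian local ring of characteristic $p$ with $\dim R = d$. Under the paper's standing $F$-finiteness assumption, $F_*R$ is a finitely generated $R$-module, so flatness is equivalent to freeness.

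For the forward direction (regular implies flat), I would exploit a regular system of parameters $x_1, \ldots, x_d$ for $\mathfrak{m}$. Since $R$ is Cohen-Macaulay and $\mathfrak{m}^{[p]} = (x_1^p, \ldots, x_d^p)$ is $\mathfrak{m}$-primary of height $d$, the sequence $x_1^p, \ldots, x_d^p$ is itself a regular sequence. Passing to the completion, Cohen's structure theorem identifies $\hat R$ with $k[[x_1, \ldots, x_d]]$. One then checks directly that $\hat R$ is free as a module over its Frobenius image $\hat R^p = k^p[[x_1^p, \ldots, x_d^p]]$, with basis given by the monomials $x_1^{a_1} \cdots x_d^{a_d}$ for $0 \le a_i < p$, each multiplied by elements of a fixed $k^p$-basis of $k$. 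Faithful flatness of the completion then transfers flatness of Frobenius back to $R$.

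For the reverse direction (flat implies regular), suppose $F_*R$ is flat; iterating, $F_*^e R$ is flat for every $e > 0$. By Serre's criterion, it suffices to show the residue field $k$ has finite projective dimension over $R$. Take a minimal free resolution $P_\bullet \to k$ and consider the Tor modules $\operatorname{Tor}_i^R(k, R/\mathfrak{m}^{[p^e]})$. Computing these via the minimal resolution gives dimensions controlled by the Betti numbers of $k$; computing them via flatness of Frobenius constrains their growth in $e$. The two computations become incompatible unless the resolution is finite, so $\operatorname{pd}_R k < \infty$ and $R$ is regular.

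The main obstacle is the reverse direction: carefully exploiting iterated Frobenius flatness, via Tor comparisons against the thickenings $R/\mathfrak{m}^{[p^e]}$, to force finiteness of the global dimension of $R$. The forward direction, by contrast, reduces essentially to an explicit computation of bases in a power series ring after completion.
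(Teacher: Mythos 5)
The first thing to say is that the paper offers no proof of this statement to compare against: it is quoted as a black box from Kunz \cite{Kunz.69}, so you are in effect reproving a substantial classical theorem from scratch. Your forward direction is essentially sound under the paper's standing $F$-finiteness hypothesis: after completing, $k[[x_1,\ldots,x_d]]$ is indeed free over $k^p[[x_1^p,\ldots,x_d^p]]$ on the monomials $\lambda_j x_1^{a_1}\cdots x_d^{a_d}$ with $0\le a_i<p$ and $\lambda_j$ running over a basis of $k$ over $k^p$ (note this explicit basis genuinely requires $[k:k^p]<\infty$), and freeness descends along the faithfully flat completion because $F_*R$ is finitely generated. A slicker route, which you set up but do not actually use, is to observe that since $x_1^p,\ldots,x_d^p$ is a regular sequence, the Koszul complex on $x_1,\ldots,x_d$ computes $\operatorname{Tor}_1^R(F_*R,k)=F_*H_1(K_\bullet(x_1^p,\ldots,x_d^p;R))=0$, and a finitely generated module over a local ring with vanishing $\operatorname{Tor}_1$ against $k$ is free.

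The genuine gap is in the reverse direction. The sentence ``the two computations become incompatible unless the resolution is finite'' is precisely the content of the theorem, and it is asserted rather than proved. If you carry out the comparison you describe, flatness of the (iterated) Frobenius applied to the minimal free resolution $P_\bullet$ of $k$ shows that the Frobenius pullback $F^e(P_\bullet)$ is a free resolution of $R/\mathfrak{m}^{[p^e]}$, still minimal because its matrices have entries in $\mathfrak{m}^{[p^e]}\subseteq\mathfrak{m}$; the conclusion is that $\beta_i(R/\mathfrak{m}^{[p^e]})=\beta_i(k)$ for all $i$ and $e$. This equality of Betti numbers is not by itself contradictory when $\operatorname{pd}_R k=\infty$: for infinite resolutions there is no Euler-characteristic identity to violate, so no incompatibility has actually been exhibited. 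Every known proof needs a further idea at exactly this point --- for instance Kunz's original length computation, which uses flatness to show $\ell(R/\mathfrak{m}^{[p^e]})=p^{ed}$ and then compares with the lower bound $\ell(R/\mathfrak{m}^{p^e})\ge\binom{p^e+d-1}{d}$ coming from $\mathfrak{m}^{[p^e]}\subseteq\mathfrak{m}^{p^e}$ to force $\operatorname{gr}_{\mathfrak{m}}(R)$ to be a polynomial ring; or Herzog's more delicate syzygy and acyclicity argument. As written, your reverse implication is a plan rather than a proof, and the plan does not yet identify the mechanism that makes the two Tor computations clash.
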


\begin{lemma}[{\cite[Lemma~1.6]{Fedder.83}}]
If $S$ is an $F$-finite regular local ring, then $\hom_S(F_*^eS, S)$ is a free rank one $F_*^eS$ module.
\end{lemma}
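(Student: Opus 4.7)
The plan is to recognize $\hom_S(F_*^e S, S)$ as a canonical module of $F_*^e S$ and exploit that $F_*^e S$ is Gorenstein.

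Set $T = F_*^e S$. Since $S$ is $F$-finite, $T$ is finitely generated as an $S$-module, and by Kunz's theorem (quoted just above) regularity of $S$ forces $T$ to be flat, hence free, over the local ring $S$. As a ring, $T$ is isomorphic to $S$ via the iterated Frobenius $F^e$, so $T$ is itself regular local, and in particular Gorenstein.

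Because $S$ is Gorenstein with $\omega_S \cong S$ and $S \to T$ is a finite homomorphism of Cohen--Macaulay local rings, the standard formula for the canonical module of a finite extension (a consequence of local duality) produces an isomorphism of $T$-modules
\[
\omega_T \;\cong\; \hom_S(T, \omega_S) \;\cong\; \hom_S(F_*^e S, S),
\]
where the $T$-module structure on the right-hand side is the premultiplication action already extracted from the Cartier algebra multiplication in Section~2. Since $T$ is Gorenstein local, every canonical module of $T$ is free of rank one, which yields the desired isomorphism $\hom_S(F_*^e S, S) \cong F_*^e S$ of $F_*^e S$-modules.

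The main obstacle is justifying the displayed duality formula $\omega_T \cong \hom_S(T, \omega_S)$, which is the standard construction of the relative dualizing module. If one prefers to avoid invoking canonical modules, a hands-on parallel route is to reduce to the complete case $S = k[[x_1, \dots, x_d]]$ (using that $F$-finiteness forces $[k : k^{p^e}] < \infty$); then $F_*^e S$ has the explicit $S$-basis $\{F_*^e(\lambda_i x_1^{\alpha_1}\cdots x_d^{\alpha_d})\}$ where $\{\lambda_i\}$ is a $k^{p^e}$-basis of $k$ and $0 \leq \alpha_j < p^e$. One then defines the ``corner'' functional $\Phi$ sending the top monomial $F_*^e(\lambda_1 x_1^{p^e-1}\cdots x_d^{p^e-1})$ to $1$ and vanishing on every other basis element, and checks that the $F_*^e S$-linear map $F_*^e r \mapsto F_*^e r \cdot \Phi$ is surjective by exhibiting a dual basis for $\hom_S(F_*^e S, S)$; equality of $S$-ranks on both sides then upgrades surjectivity to the claimed isomorphism.
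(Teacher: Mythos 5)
The paper does not prove this statement at all; it is quoted verbatim from Fedder with a citation, so there is no in-text argument to compare against. Your proposal is a correct proof, and in fact you give two valid routes. The duality route is sound: $F_*^eS$ is module-finite over $S$ (by $F$-finiteness), is regular local as a ring (it is $S$), and has the same dimension as $S$, so the relative dualizing formula $\omega_{F_*^eS}\cong\hom_S(F_*^eS,\omega_S)\cong\hom_S(F_*^eS,S)$ applies with no dimension shift, and Gorensteinness of $F_*^eS$ finishes the argument; the one point worth making explicit is that the $F_*^eS$-action for which this is an isomorphism is premultiplication, which does agree with the action the paper uses. Your second, hands-on route is essentially Fedder's original argument: after completing (and noting that completion commutes with $F_*^e$ and that freeness descends along the faithfully flat map $S\to\widehat S$), one exhibits the corner functional $\Phi$ as a generator. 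Two small things you should check there: the map $F_*^er\mapsto F_*^er\cdot\Phi$ is not literally dual-basis-to-dual-basis (multiplying by $F_*^e(x^{p^e-1-\alpha})$ produces the dual basis element for $x^\alpha$ only up to an upper-triangular correction, since exponents can overflow past $p^e$), and for imperfect residue fields you need that your chosen functional on the $k^{p^e}$-vector space $k$ generates $\hom_{k^{p^e}}(k,k^{p^e})$ as a $k$-module, which holds because that Hom is one-dimensional over $k$. Neither issue is a gap, just bookkeeping. The duality argument buys brevity and conceptual clarity; the explicit argument buys the concrete generator $\Phi$ that the paper actually uses later in the proof of \Cref{c-ideal-quo-reg-ring}.
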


Further, Glassbrenner, building on work of Fedder, gives us the following description of the $R$-module structure on maps in the local case.

\begin{lemma}[{Fedder's Lemma \cite[Lemma~2.1]{Glassbrenner.96}}]
\label{qr-hom-presentation}
Let $S$ be an $F$-finite regular local ring and let $R=S/I$ for some ideal $I$. Then
\[
\hom_{R}(F_*^e R, R) \cong F_*^e\left( \frac{I^{[p^e]}:I}{I^{[p^e]}}  \right)
\]
as $R$-modules. 
\end{lemma}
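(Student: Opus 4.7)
The plan is to identify \(\hom_R(F_*^e R, R)\) with a submodule of \(\hom_S(F_*^e S, R)\), convert the target using the rank-one-free structure of \(\hom_S(F_*^e S, S)\) over \(F_*^e S\), and then translate everything back into ideal-theoretic conditions in \(S\).

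First, since \(F_*^e R\) and \(R\) are both annihilated by \(I\), \(R\)-linearity and \(S\)-linearity of maps \(F_*^e R\to R\) coincide, so \(\hom_R(F_*^e R, R) = \hom_S(F_*^e R, R)\); under the surjection \(F_*^e S \twoheadrightarrow F_*^e R\) this is identified with the submodule of \(\hom_S(F_*^e S, R)\) consisting of maps that vanish on \(F_*^e I\). By Kunz's theorem, \(F_*^e S\) is flat over \(S\); since \(S\) is \(F\)-finite local, \(F_*^e S\) is finitely generated and hence free, so \(\hom_S(F_*^e S, -)\) is exact and the tensor-hom identity \(\hom_S(F_*^e S, S)\otimes_S N \cong \hom_S(F_*^e S, N)\) holds. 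Applied to \(0\to I\to S\to R\to 0\) this yields
\[
\hom_S(F_*^e S, R) \;\cong\; \hom_S(F_*^e S, S)\,\big/\, I\cdot \hom_S(F_*^e S, S).
\]

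By the cited Fedder lemma, fix a generator \(\sigma\) of \(\hom_S(F_*^e S, S)\) as an \(F_*^e S\)-module. This gives an \(S\)-linear isomorphism \(\mu\colon F_*^e S \xrightarrow{\cong} \hom_S(F_*^e S, S)\) sending \(F_*^e u\) to the map \(F_*^e x \mapsto \sigma(F_*^e(ux))\). Under the twisted \(S\)-action \(s\cdot F_*^e x = F_*^e(s^{p^e}x)\) a direct computation gives \(I\cdot F_*^e S = F_*^e(I^{[p^e]})\), so \(\mu\) descends to an isomorphism \(\overline{\mu}\colon F_*^e\bigl(S/I^{[p^e]}\bigr) \xrightarrow{\cong} \hom_S(F_*^e S, R)\), under which \(F_*^e\bar u\) corresponds to the class of \(F_*^e x\mapsto \overline{\sigma(F_*^e(ux))}\).

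It remains to identify, via \(\overline{\mu}\), the submodule of maps killing \(F_*^e I\). The map \(\overline{\mu}(F_*^e \bar u)\) kills \(F_*^e I\) precisely when \(\sigma(F_*^e(uI))\subseteq I\), and I claim this is equivalent to \(uI\subseteq I^{[p^e]}\), i.e.\ \(u\in I^{[p^e]}:I\). The direction \(uI\subseteq I^{[p^e]} \Rightarrow \sigma(F_*^e(uI))\subseteq I\) is immediate from \(S\)-linearity of \(\sigma\) and the identity \(F_*^e(I^{[p^e]})=I\cdot F_*^e S\). For the converse, fix \(i\in I\); since \(is\in I\) for every \(s\in S\), the hypothesis yields \(\sigma(F_*^e(ui\cdot s))\in I\) for all \(s\), so \(\sigma\cdot F_*^e(ui)\) lies in \(\hom_S(F_*^e S, I) = I\cdot \hom_S(F_*^e S, S)\), and pulling back through \(\mu\) forces \(F_*^e(ui)\in F_*^e(I^{[p^e]})\), i.e.\ \(ui\in I^{[p^e]}\). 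Restricting \(\overline{\mu}\) to these classes gives the asserted isomorphism.

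The main obstacle I anticipate is the converse direction of the last claim: it is precisely where the rank-one structure of \(\hom_S(F_*^e S, S)\) over \(F_*^e S\) is genuinely used, because the generator \(\sigma\) is what converts the Hom-condition ``\(\sigma\cdot F_*^e(ui)\) has image in \(I\)'' into the ideal-theoretic condition ``\(F_*^e(ui)\in I\cdot F_*^e S\)''. The rest is routine once one is careful with the Frobenius-twisted \(S\)-action and the identity \(I\cdot F_*^e S = F_*^e(I^{[p^e]})\).
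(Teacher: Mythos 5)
Your argument is correct. The paper does not prove this lemma---it is quoted from Glassbrenner (and is originally due to Fedder)---but your proof is a faithful reconstruction of the standard argument: reduce to $\hom_S(F_*^e S, R)$ via the presentation $F_*^e I \to F_*^e S \to F_*^e R \to 0$, use freeness of $F_*^e S$ over the regular local ring $S$ to write $\hom_S(F_*^e S, R) \cong \hom_S(F_*^e S,S)/I\cdot\hom_S(F_*^e S,S)$, and then use the rank-one generator $\sigma$ to translate ``vanishes on $F_*^e I$ modulo $I$'' into $u \in I^{[p^e]}:I$. All the steps, including the converse implication via the injectivity of $\mu$ and the identity $\hom_S(F_*^e S, I) = I\cdot\hom_S(F_*^e S, S)$, are sound.
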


This description of $\hom_R(F_*^eR, R)$ is the core of Fedder's criterion and of Glassbrenner's criterion.

\begin{prop}[{\cite[Prop.~1.7]{Fedder.83}}]
Let $(S,\mf m)$ be an $F$-finite regular local ring of prime characteristic $p$, and let $I$ be an ideal of $S$. Then $R$ is $F$-pure if and only if $(I^{[p]}:I)\not\subseteq \mf m^{[p]}$.
\end{prop}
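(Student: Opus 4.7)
The plan is to use the presentation $\hom_R(F_*R, R) \cong F_*\bigl((I^{[p]}:_S I)/I^{[p]}\bigr)$ from \Cref{qr-hom-presentation} to convert $F$-purity of $R$ into a concrete condition on elements of $S$. First I would fix a generator $\Phi$ of $\hom_S(F_*S, S)$ as a free $F_*S$-module of rank one (guaranteed by the free-rank-one lemma stated just above), so that under the isomorphism above the class $\olin u \in (I^{[p]}:_S I)/I^{[p]}$ corresponds to the map $\olin{\phi_u}: F_*\olin r \mapsto \olin{\Phi(F_*(ur))}$ on $R$.

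Since $R$ is local with maximal ideal $\mf m/I$, a Nakayama-style rescaling lets me replace ``$F$-pure'' by the weaker ``some $\phi\in \hom_R(F_*R, R)$ sends $F_*\olin 1$ to a unit in $R$'', and $\phi(F_*\olin 1)$ is a unit iff it lies outside $\mf m/I$. Translating through the isomorphism and using $I \subseteq \mf m$, the $F$-purity condition becomes exactly
\[
\Phi\bigl(F_*(I^{[p]}:_S I)\bigr) \not\subseteq \mf m.
\]

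The heart of the proof is then the lemma: for any ideal $J \subseteq S$, one has $\Phi(F_*J) \subseteq \mf m$ if and only if $J \subseteq \mf m^{[p]}$. The ``if'' direction is a direct calculation. For the harder ``only if'' direction, I would first note that flatness of $F_*S$ (via Kunz's theorem) gives $\hom_S(F_*S, \mf m) = \mf m\cdot \hom_S(F_*S, S)$, and that the $F_*S$-action formula $s \cdot F_*t = F_*(s^pt)$ identifies $\mf m \cdot F_*S = F_*\mf m^{[p]}$. Combining this with $\hom_S(F_*S, S) = F_*S \cdot \Phi$ yields
\[
\hom_S(F_*S, \mf m) = F_*\mf m^{[p]} \cdot \Phi,
\]
so the map $\Phi \cdot v : F_* r \mapsto \Phi(F_*(vr))$ has image inside $\mf m$ precisely when $v \in \mf m^{[p]}$. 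Since $J$ is an ideal, ``$\Phi(F_*v) \in \mf m$ for every $v \in J$'' is equivalent to ``$\Phi \cdot v$ has image in $\mf m$ for every $v \in J$'', which by the previous sentence is equivalent to $J \subseteq \mf m^{[p]}$.

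The main obstacle I anticipate is keeping the $F_*$ bookkeeping straight, particularly in the identification $\mf m F_*S = F_*\mf m^{[p]}$ and the resulting description of $\hom_S(F_*S, \mf m)$; an alternative that avoids the abstract module manipulation is to pick a regular system of parameters $x_1, \ldots, x_n$ of $S$ and expand in the resulting $S^{(p)}$-basis $\{F_*x^\alpha\}_{0\le \alpha_i < p}$ of $F_*S$, tracking the $x^{(p-1,\ldots,p-1)}$-coefficient directly, which is less elegant but avoids committing to Fedder's free-module structure.
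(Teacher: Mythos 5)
Your argument is correct, and it is essentially the approach the paper takes: the paper states this proposition as a citation to Fedder without proof, but your chain of reductions (free generator $\Phi$ via \Cref{qr-hom-presentation}, the identification $\mf m F_*S = F_*\mf m^{[p]}$, and the resulting colon condition) is exactly the specialization $e=1$, $c=1$, $J=\mf m$ of the paper's proof of \Cref{c-ideal-quo-reg-ring}. No gaps.
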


\begin{lemma}[{\cite[Lemma~2.2]{Glassbrenner.96}}]
\label{lem:e-split-locus}
Let $(S,\mf m)$ be an $F$-finite regular local ring of prime characteristic $p$. Let $I$ be an ideal of $S$. Then the map $S/I \to F_*^e(S/I)$, where $1\mapsto F_*^ec$, splits as an $(S/I)$-module map exactly when $c\notin \mf m^{[p^e]} :(I^{[p^e]}:I)$.
\end{lemma}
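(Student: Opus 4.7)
The plan is to apply Fedder's Lemma (\Cref{qr-hom-presentation}) to parametrize $\hom_{S/I}(F_*^e(S/I), S/I)$ explicitly and then translate the splitting condition into an ideal-theoretic statement about $c$. Fix a generator $\Phi$ of $\hom_S(F_*^e S, S)$ as a rank-one free $F_*^e S$-module. Under \Cref{qr-hom-presentation}, every class in $\hom_{S/I}(F_*^e(S/I), S/I)$ is represented by some $u \in I^{[p^e]}:I$ acting as
\[
\psi_u: F_*^e \olin s \longmapsto \olin{\Phi(F_*^e(us))},
\]
well-defined up to replacing $u$ by a representative mod $I^{[p^e]}$.

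The map $\phi_c: 1 \mapsto F_*^e \olin c$ splits as an $(S/I)$-linear map if and only if some $\psi_u$ satisfies $\psi_u(F_*^e \olin c) = \olin 1$. Equivalently, the image of the evaluation
\[
\mathrm{ev}_c: \hom_{S/I}(F_*^e(S/I), S/I) \to S/I, \qquad \psi \mapsto \psi(F_*^e \olin c),
\]
which is an $(S/I)$-submodule of $S/I$, i.e., an ideal, must contain a unit. Since $S/I$ is local with maximal ideal $\mf m/I$, this is equivalent to the image not being contained in $\mf m/I$. Spelling this out, $\phi_c$ splits iff some $u \in I^{[p^e]}:I$ satisfies $\Phi(F_*^e(uc)) \notin \mf m$.

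The main technical step, and what I expect to be the principal obstacle, is the standard fact underpinning Fedder's criterion: for such a generator $\Phi$, one has $\Phi(F_*^e s) \in \mf m$ if and only if $s \in \mf m^{[p^e]}$. The easy containment is $F_*^e S$-linearity; writing $s = \sum a_i m_i^{p^e}$ with $m_i \in \mf m$ gives $\Phi(F_*^e s) = \sum m_i \Phi(F_*^e a_i) \in \mf m$. The reverse uses the freeness of $F_*^e S$ over $S$ (Kunz) together with the hypothesis that $\Phi$ generates $\hom_S(F_*^e S, S)$ over $F_*^e S$: one shows $\Phi^{-1}(\mf m) = \mf m \cdot F_*^e S = F_*^e \mf m^{[p^e]}$, typically by reducing modulo $\mf m^{[p^e]}$ and identifying $\Phi$ (up to a unit) with a suitable coordinate projection of $F_*^e S$ relative to an $F_*^e S$-basis arising from a regular system of parameters for~$\mf m$.

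Granting this characterization, the existence of a splitting fails iff $\Phi(F_*^e(uc)) \in \mf m$ for every $u \in I^{[p^e]}:I$, iff $c \cdot (I^{[p^e]}:I) \subseteq \mf m^{[p^e]}$, iff $c \in \mf m^{[p^e]} : (I^{[p^e]}:I)$. Negating yields precisely the statement of the lemma.
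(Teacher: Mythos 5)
The paper itself does not prove this lemma (it is quoted from Glassbrenner), but it proves the generalization \Cref{c-ideal-quo-reg-ring} by exactly the route you take: parametrize $\hom_{S/I}(F_*^e(S/I),S/I)$ via \Cref{qr-hom-presentation} using a generator $\Phi$ of $\hom_S(F_*^eS,S)$ over $F_*^eS$, reduce the splitting condition to the evaluation ideal not landing in $\mf m$, and convert to a colon condition. Your reduction to ``$\phi_c$ splits iff some $u\in I^{[p^e]}:I$ has $\Phi(F_*^e(uc))\notin\mf m$'' is correct.

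However, the ``main technical step'' as you state it is false. The pointwise claim that $\Phi(F_*^e s)\in\mf m$ if and only if $s\in\mf m^{[p^e]}$ fails in the reverse direction, and the identity $\Phi^{-1}(\mf m)=\mf m\cdot F_*^eS$ you propose to prove is wrong: for $S=k[[t]]$ with $k$ perfect and $e=1$, the generator $\Phi$ is (up to a unit) the projection onto the $F_*t^{p-1}$-coordinate of the basis $F_*1,\dots,F_*t^{p-1}$, so $\Phi(F_*1)=0\in\mf m$ while $F_*1\notin\mf m\cdot F_*S$ and $1\notin\mf m^{[p]}$; a coordinate projection never has $\Phi^{-1}(\mf m)=\mf m\cdot F_*^eS$ once the rank of $F_*^eS$ exceeds one. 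The correct statement---which is \cite[Lemma~1.6]{Fedder.83}, the fact the paper invokes at the corresponding step of \Cref{c-ideal-quo-reg-ring}---is the submodule version: $\Phi(F_*^e(sS))\subseteq\mf m$ if and only if $s\in\mf m^{[p^e]}$, equivalently the set of $F_*^ex$ with $(F_*^ex\cdot\Phi)(F_*^eS)\subseteq\mf m$ is exactly $\mf m\cdot F_*^eS$. Your final chain of equivalences happens to survive because you quantify over \emph{every} $u$ in the ideal $I^{[p^e]}:I$: the hypothesis is really $\Phi(F_*^eK)\subseteq\mf m$ for the ideal $K=c\,(I^{[p^e]}:I)$, and the submodule version then yields $K\subseteq\mf m^{[p^e]}$ as needed. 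So the conclusion stands, but the lemma you isolate as the crux must be restated, and its proof carried out, at the level of ideals rather than individual elements.
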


Since the Cartier core of $J$ is composed precisely of the elements which cannot be split in this manner, the technique of this lemma naturally leads to the following result. For the following, we use $\olin J$ to denote the image of an ideal $J$ in a quotient ring, and similarly $\olin c$ to denote the image of an element $c$.

\begin{thm}
\label{c-ideal-quo-reg-ring}
Let $S$ be a regular $F$-finite ring, let $I\subseteq J$ be ideals of $S$, and let $R=S/I$. Fix $e\geq 1$, $c\in S$. Then there exists some $\phi \in \hom_R(F_*^eR, R)$ with $\phi(\olin c)\notin \olin{J}$ if and only if $c\notin J^{[p^e]}:(I^{[p^e]}:I)$.
In particular, 
\[
A_{e;R}(\olin J) = \olin{J^{[p^e]}:_S (I^{[p^e]}:_S I)}
\quad\textrm{ and }\quad
C_R(\olin J) = \olin{\bigcap_e J^{[p^e]}:_S (I^{[p^e]}:_S I)}.
\]
\end{thm}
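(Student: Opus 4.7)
The plan is to generalize Glassbrenner's lemma (\Cref{lem:e-split-locus}), which is our statement in the special case where $S$ is local and $J=\mathfrak m$. First I reduce to the local case: the ideal $J^{[p^e]}:_S(I^{[p^e]}:_S I)$ localizes correctly at each maximal ideal of $S$ because colons of finitely generated ideals commute with localization. Likewise, the existence of $\varphi \in \hom_R(F_*^e R, R)$ with $\varphi(F_*^e \olin{c}) \notin \olin{J}$ is detected at some maximal ideal $\mathfrak m \supseteq J$: the $R$-module $\hom_R(F_*^e R, R)$ is finitely generated by $F$-finiteness, so the evaluation map $\varphi \mapsto \varphi(F_*^e \olin{c}) + \olin{J}$ from $\hom_R(F_*^e R, R)$ to $R/\olin{J}$ vanishes globally iff it vanishes at every maximal ideal of its support. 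Thus I may assume $(S, \mathfrak m)$ is regular local.

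In the local case, fix a generator $\Phi$ of the free rank-one $F_*^e S$-module $\hom_S(F_*^e S, S)$. By \Cref{qr-hom-presentation}, every $\varphi \in \hom_R(F_*^e R, R)$ arises from some $u \in I^{[p^e]}:_S I$ (uniquely modulo $I^{[p^e]}$) via $\varphi(F_*^e \olin{s}) = \olin{\Phi(F_*^e(us))}$. The condition ``$\varphi(F_*^e \olin{c}) \in \olin{J}$ for every $\varphi$'' thus translates to ``$\Phi(F_*^e(uc)) \in J$ for every $u \in I^{[p^e]}:_S I$.''

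The technical heart of the proof is the following assertion: for any ideal $K \subseteq S$ and any $w \in S$,
\[
w \in K^{[p^e]} \iff \Phi(F_*^e(zw)) \in K \text{ for all } z \in S.
\]
To prove this, choose a regular system of parameters $x_1,\ldots,x_n$ for $S$, so that $S$ is free over $S^{p^e}$ with the monomial basis $\{x^\beta : 0 \le \beta_i < p^e\}$. Every $w \in S$ has a unique expansion $w = \sum_\beta c_\beta^{p^e} x^\beta$, and $K^{[p^e]}$ decomposes along this basis as $\bigoplus_\beta K^{p^e} \cdot x^\beta$, so $w \in K^{[p^e]}$ iff every $c_\beta \in K$. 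Choosing $\Phi$ to project onto the coefficient of $x^{(p^e-1,\ldots,p^e-1)}$, a direct computation gives $\Phi(F_*^e(x^\gamma w)) = c_{(p^e-1,\ldots,p^e-1) - \gamma}$ for $0 \le \gamma_i < p^e$, which recovers every $c_\beta$; general $z \in S$ reduce to this monomial case by the $S^{p^e}$-linearity of $z \mapsto \Phi(F_*^e(zw))$.

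Combining: if $\Phi(F_*^e(uc)) \in J$ for every $u \in I^{[p^e]}:_S I$, then since this colon is an $S$-ideal, also $\Phi(F_*^e((zu)c)) \in J$ for every $z \in S$; the assertion (with $K = J$ and $w = uc$) then forces $uc \in J^{[p^e]}$ for every such $u$, i.e., $c \in J^{[p^e]}:_S(I^{[p^e]}:_S I)$. The converse uses only the easy direction of the assertion. The ``in particular'' identities follow from $A_{e;R}(\olin{J}) = \{\olin{c} : \varphi(F_*^e \olin{c}) \in \olin{J}\ \forall \varphi\}$ and $C_R(\olin{J}) = \bigcap_e A_{e;R}(\olin{J})$, together with $I \subseteq J^{[p^e]}:_S(I^{[p^e]}:_S I)$ (since $I \subseteq J$ forces $I^{[p^e]} \subseteq J^{[p^e]}$) so that the quotient map $S \to S/I$ commutes with the intersection over $e$. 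The main obstacle is the technical assertion about $\Phi$, where regularity of $S$ enters essentially via the regular system of parameters and the resulting freeness of $S$ over $S^{p^e}$.
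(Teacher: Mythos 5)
Your proposal follows the same route as the paper: reduce to the local case via localization, invoke Fedder's presentation (\Cref{qr-hom-presentation}) to write every $\varphi \in \hom_R(F_*^eR,R)$ as $\Phi \circ F_*^e(u)$ with $u \in I^{[p^e]}:I$, and then convert the condition $\Phi(F_*^e(uc)) \in J$ for all such $u$ into $c \in J^{[p^e]}:(I^{[p^e]}:I)$. The only real divergence is in how the key step is justified: the paper cites Fedder's Lemma~1.6 (freeness of $\hom_S(F_*^eS,S)$ over $F_*^eS$) together with flatness of Frobenius to identify $(JF_*^eS):F_*^e(I^{[p^e]}:I)$ with $F_*^e(J^{[p^e]}:(I^{[p^e]}:I))$, whereas you prove the underlying fact --- that $w \in K^{[p^e]}$ iff $\Phi(F_*^e(zw)) \in K$ for all $z$ --- by hand with an explicit basis. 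That is a perfectly good and arguably more self-contained way to do it.

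One inaccuracy to fix: your claim that a regular system of parameters $x_1,\dots,x_n$ makes $S$ free over $S^{p^e}$ with basis $\{x^\beta : 0 \le \beta_i < p^e\}$ is false when the residue field is imperfect (e.g.\ $S = \mathbb F_p(t)[x]_{(x)}$ has rank $p^2$ over $S^p$, not $p$), and an $F$-finite regular local ring need not have perfect residue field. The assertion you want is still true, but the basis must be enlarged by (lifts of) a $p^e$-basis of the residue field, with $\Phi$ projecting onto the top basis element; alternatively, avoid explicit bases entirely and argue that for a free module $M = F_*^eS$ with generator $\Phi$ of $M^\vee$, an element lies in $K\cdot M = F_*^e(K^{[p^e]})$ iff every functional --- equivalently every $\Phi \circ (F_*^ez \cdot -)$ --- sends it into $K$, by expanding against a dual basis. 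With that repair the argument is complete and matches the paper's conclusion, including the passage to $A_{e;R}$ and $C_R$, where your observation that $I \subseteq J^{[p^e]}:(I^{[p^e]}:I)$ correctly justifies commuting the intersection over $e$ with the quotient by $I$.
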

\begin{proof}
The representations of $A_e$ and $\Ccore_R$ follow directly from the first statement, so it suffices to prove that $c\notin J^{[p^e]}:(I^{[p^e]}:I)$ if and only if there is some $\phi \in \hom_R(F_*^eR, R)$ with $\phi(\olin c)\notin \olin J$. 

For our fixed $e$, let $E: \hom_R(F_*^e R, R)\to R$ be the ``evaluation at $c$'' map, so that $E(\phi) = \phi(F_*^e c)$. Our goal is to show $\im(E)\subseteq \olin J$ if and only if $c\in J^{[p^e]}:(I^{[p^e]}:I)$. 
By the discussion in \Cref{sec:background}, we can view the localization of $E$ as a map $\hom_{R_P}(F_*^e(R_P), R_P)\to R_P$ so that $(\im E)_P \cong \im (E_P)$. Since localization also commutes with Frobenius and with ideal colon, we can without loss of generality assume that $(S,\mf m)$ is local.

Let $\Psi$ be a generator of $\hom_S(F_*^e S, S)$ as an $F_*^e S$ module. By \Cref{qr-hom-presentation}, the maps $\phi\in \hom_R(F_*^eR, R)$ are exactly those maps induced by something of the form $\Psi\circ F_*^e(s)$ where $s\in I^{[p^e]}:I$. Thus 
\[
\phi(F_*^e(\olin c)) = \olin{(\Psi\circ F_*^es)(F_*^ec)} = \olin{\Psi(F_*^e(sc))}
\]
and so there exists $\phi$ with $\phi(F_*^e(\olin c))\notin \olin J$ if and only if there exists $s\in I^{[p^e]}:I$ with $\Psi(F_*^e(sc))\notin J$, i.e., if and only if 
\[
\Psi\left(F_*^e\left( c(I^{[p^e]}:I)\right)\right) = (F_*^ec \cdot \Psi)(I^{[p^e]}:I)\not\subseteq J.
\]
Using \cite[Lemma~1.6]{Fedder.83}, this occurs if and only if 
\[
F_*^e(c)\notin (J F_*^eS):(F_*^e(I^{[p^e]}:I)) = F_*^e(J^{[p^e]}):F_*^e(I^{[p^e]}:I).
\]
Since $S$ is regular, the flat Frobenius commutes with colon and is injective, thus this is equivalent to
\[
c\notin J^{[p^e]}:(I^{[p^e]}:I).\qedhere
\]
\end{proof}

We will frequently move between considering $\Ccore_R(\olin J)$ in $R$ and its lift $\bigcap_{e>0}J^{[p^e]}:(I^{[p^e]}:I)$ in $S$, which we will denote as either $\wt{\Ccore}_R(J)$ or $\wt{\Ccore}_R(\olin J)$. Similarly, we will denote the lift of $A_{e;R}(\olin J)$ as $\wt A_{e;R}(J)$ or $\wt A_{e;R}(\olin J)$.

We now prove results which let us connect Cartier cores of related ideals computed in different, related rings.

\begin{lemma}
\label{qr-flat-extension-containment}
Let $S_1\to S_2$ be a flat map of regular $F$-finite rings. Consider ideals $I\subseteq J_1$ in $S_1$, and ideal $J_2$ in $S_2$ contracting to $J_1$. Let $R_1 = S_1/I$ and $R_2 = S_2/IS_2$. Then
\[
C_{R_1}(\olin{J_1})R_2 \subseteq C_{R_2}(\olin{J_2}).
\]
\end{lemma}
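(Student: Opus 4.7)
The plan is to translate the containment into a comparison of colon ideals in the ambient regular rings $S_1$ and $S_2$ by invoking the explicit formula of \Cref{c-ideal-quo-reg-ring}, after which the containment should fall out of flat base change for colons combined with the hypothesis $J_1 = J_2 \cap S_1$.

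First, I would pass to the lifts in the regular rings. By \Cref{c-ideal-quo-reg-ring}, the claim reduces to showing $\wt{\Ccore}_{R_1}(J_1)\, S_2 \subseteq \wt{\Ccore}_{R_2}(J_2)$. Writing each lift as an intersection, it further reduces to fixing $e > 0$ and verifying that any $c \in J_1^{[p^e]} :_{S_1} (I^{[p^e]} :_{S_1} I)$ satisfies $c \in J_2^{[p^e]} :_{S_2} \bigl((IS_2)^{[p^e]} :_{S_2} IS_2\bigr)$; intersecting over all $e$ then gives the result.

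The key input is flat base change for colon ideals: since $S_1 \to S_2$ is flat and the ideals involved are finitely generated, and since Frobenius powers commute with ideal extension (i.e.\ $I^{[p^e]} S_2 = (IS_2)^{[p^e]}$), we obtain
\[
\bigl(I^{[p^e]} :_{S_1} I\bigr) S_2 \;=\; \bigl(I^{[p^e]} S_2\bigr) :_{S_2} (IS_2) \;=\; (IS_2)^{[p^e]} :_{S_2} IS_2.
\]
Combined with $J_1 S_2 \subseteq J_2$ (which is immediate from $J_1 = J_2 \cap S_1$), I can then argue: if $c \cdot (I^{[p^e]} :_{S_1} I) \subseteq J_1^{[p^e]}$, extending scalars to $S_2$ yields
\[
c \cdot \bigl((IS_2)^{[p^e]} :_{S_2} IS_2\bigr) \;\subseteq\; J_1^{[p^e]} S_2 \;=\; (J_1 S_2)^{[p^e]} \;\subseteq\; J_2^{[p^e]},
\]
which is exactly the required membership.

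The only nontrivial point is the flat base change identity for colon ideals, which is a standard consequence of flatness for finitely generated ideals; everything else is routine bookkeeping with Frobenius powers and the hypothesis that $J_2$ contracts to $J_1$.
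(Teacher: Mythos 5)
Your proof is correct and follows essentially the same route as the paper's: both reduce via \Cref{c-ideal-quo-reg-ring} to the lifted ideals in the regular rings, apply flat base change for colon ideals (with the finitely generated denominator) together with the compatibility of Frobenius powers with extension, use $J_1 S_2 \subseteq J_2$, and conclude by intersecting over all $e$. The only cosmetic difference is that you argue elementwise for each fixed $e$ while the paper manipulates the ideals directly.
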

\begin{proof}
Finite intersections always commute with flat base change. Thus for any sequence of ideals $\{K_e\}_{e\in \N}$ and for any $n$, 
\[
\left(\bigcap_{e=1}^\infty K_e\right) S_2  \subseteq \left(\bigcap_{e=1}^n K_e\right) S_2 = \bigcap_{e=1}^n (K_eS_2)
\]
and in particular we must have $\left(\bigcap_{e=1}^\infty K_e\right) S_2 \subseteq \bigcap_{e=1}^\infty (K_eS_2)$. Colon commutes with flat base change when the ideals are finitely generated \cite[Thm.~7.4]{Matsumura.89}. Thus
\begin{align*}
\left(\bigcap_{e\geq 1}J_1^{[p^e]}:(I^{[p^e]}:I)\right) S_2
&\subseteq \bigcap_{e\geq 1} \left( (J_1S_2)^{[p^e]}:((IS_2)^{[p^e]}:IS_2) \right)
    \subseteq \bigcap_{e\geq 1} \left( J_2^{[p^e]}:((IS_2)^{[p^e]}:IS_2) \right),
\end{align*}
which by using \Cref{c-ideal-quo-reg-ring} to pass to the quotient gives
\[
C_{R_1}(\olin{J_1})R_2 \subseteq C_{R_2}(\olin{J_2}).\qedhere
\]
\end{proof}
In the case of a general flat map, even a general faithfully flat map, containment is the best we can do. 
For example, consider $S_1=k[x^p]$ and $S_2 = k[x]$ where $k$ is a perfect field. The inclusion of $S_1$ into $S_2$ is faithfully flat since it corresponds to the Frobenius on the regular ring $k[x]$. Now consider $I=J_1=\langle x^p\rangle\subset S_1$ and $J_2 = \langle x \rangle \subset S_2$. Then $R_1=S_1/I\cong K$ which is Frobenius split, so $C_{R_1}(\olin J_1) = \olin J_1$. But $R_2 = S_2/IS_2 = k[x]/\langle x^p\rangle$ is not reduced, thus cannot be Frobenius split. Since $\olin J_2$ is a prime ideal, this means $C_{R_2}(\olin J_2)=R_2$.

However, it turns out that in the case of adjoining a variable, we can get a stronger result.

\begin{prop}
\label{qr-add-var}
Let $R$ be a quotient of a regular $F$-finite ring, let $J$ be an ideal of $R$, and let $J'$ be an ideal of $R[x]$ such that $JR[x] \subseteq J' \subseteq JR[x]+\langle x \rangle$. Then
\[
C_{R}(J)R[x] = C_{R[x]}(J')
\quad \textrm{ and } \quad
C_{R[x]}(J')\cap R = C_R(J).
\]
\end{prop}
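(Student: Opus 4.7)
The plan is to prove each containment in the first identity separately, then derive the second by contracting to $R$. For the easier direction $C_R(J)R[x] \subseteq C_{R[x]}(J')$, I will invoke \Cref{qr-flat-extension-containment} applied to the flat extension $S \hookrightarrow S[x]$: taking the lifts $\tilde J \subseteq S$ and $\tilde J' \subseteq S[x]$, the sandwich hypothesis on $J'$ gives $\tilde J S[x] \subseteq \tilde J' \subseteq \tilde J S[x] + \langle x\rangle$, from which $\tilde J' \cap S = \tilde J$, verifying the contraction hypothesis of that lemma.

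The main obstacle is the reverse containment $C_{R[x]}(J') \subseteq C_R(J)R[x]$. The key step is to construct, for each $\phi \in \hom_R(F_*^e R, R)$ and each index $i \geq 0$, a Cartier map $\phi^{[x], i} \in \hom_{R[x]}(F_*^e R[x], R[x])$ that extracts the $i$-th coefficient: explicitly, for $g = \sum_\ell a_\ell x^\ell \in R[x]$, set
\[
\phi^{[x], i}\bigl(F_*^e g\bigr) \;=\; \sum_{m \geq 0} \phi\bigl(F_*^e a_{p^e m + i}\bigr)\, x^m.
\]
Verifying $R[x]$-linearity is the technical heart of the argument: one expands $r(x)^{p^e} = \sum_k c_k^{p^e} x^{k p^e}$ using the characteristic $p$ identity, tracks which $a_\ell$'s contribute to the coefficient of $x^{p^e m + i}$ in $r(x)^{p^e} g(x)$, and uses $\phi(F_*^e(c^{p^e} a)) = c\,\phi(F_*^e a)$ to collapse $c_k^{p^e}$ back to $c_k$, which matches $r(x)\cdot \phi^{[x], i}(F_*^e g)$.

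Given this family of Cartier maps, the reverse containment is immediate: take $f = \sum a_i x^i \in C_{R[x]}(J')$; then $\phi^{[x], i}(F_*^e f) \in J'$ for every $e$, $\phi$, and $i$. Since $J' \subseteq JR[x] + \langle x\rangle$, the constant term of every element of $J'$ lies in $J$, and the constant term of $\phi^{[x], i}(F_*^e f)$ is precisely $\phi(F_*^e a_i)$. Hence $\phi(F_*^e a_i) \in J$ for every $e$ and $\phi$, forcing $a_i \in C_R(J)$, so $f \in C_R(J) R[x]$. Combining the two containments yields $C_{R[x]}(J') = C_R(J) R[x]$, and the second identity then follows because $R \hookrightarrow R[x]$ is split by the augmentation $x \mapsto 0$ and hence pure, so $C_R(J) R[x] \cap R = C_R(J)$.
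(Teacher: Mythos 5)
Your strategy for the hard inclusion---building explicit coefficient-extraction Cartier operators on $R[x]$ and reading off the constant term---is genuinely different from the paper's proof, which instead sandwiches $C_{R[x]}(J')$ between $C_{R[x]}(JR[x])$ and $C_{R[x]}(JR[x]+\langle x\rangle)$ and identifies the two via a graded computation with the colon-ideal presentation in $S[x]$. Your easy inclusion via \Cref{qr-flat-extension-containment} and the final contraction step are both fine. However, there is a genuine error in the key construction: $\phi^{[x],i}$ is $R[x]$-linear only for $0\le i<p^e$, not for ``each index $i\ge 0$'' as you claim. Take $i=p^e$: since $x\cdot F_*^e 1=F_*^e(x^{p^e})$, your formula gives $\phi^{[x],p^e}(x\cdot F_*^e 1)=\phi^{[x],p^e}(F_*^e(x^{p^e}))=\phi(F_*^e 1)$ (the $m=0$ term, as the only nonzero coefficient of $x^{p^e}$ sits in slot $p^e\cdot 0+p^e$), whereas $x\cdot\phi^{[x],p^e}(F_*^e 1)=x\cdot 0=0$. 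The bookkeeping in your linearity check breaks precisely because the constraint $\ell=p^e(m-k)+i\ge 0$ forces $k\le m$ only when $i<p^e$. As a result, your conclusion ``$\phi(F_*^e a_i)\in J$ for every $e$ and $\phi$'' is only established for those $e$ with $p^e>i$, i.e., you obtain $a_i\in\bigcap_{e:\,p^e>i}A_{e}(J)$ rather than $a_i\in C_R(J)=\bigcap_{e>0}A_e(J)$, and these intersections are not obviously equal.

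The gap is repairable: use only the maps with $0\le i<p^e$ (for a fixed coefficient index $i$ this still covers all $e\gg 0$), which gives $a_i\in A_e(J)$ for all sufficiently large $e$, and then invoke the fact that $C_R(J)=\bigcap_{e\gg 0}A_e(J)$; this is exactly \cite[Prop.~4.15]{Badilla-Cespedes.21}, the same stabilization result the paper's own proof leans on at the analogous point. With that citation inserted, your argument closes and is arguably more self-contained than the paper's for the hard inclusion, since it never uses the presentation $R=S/I$ there.
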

\begin{proof}
By
\Cref{containment},  
\[
C_{R[x]}(JR[x]) \subseteq C_{R[x]}(J') \subseteq C_{R[x]}(JR[x]+\langle x\rangle).
\]
Our first step will be to show
\[
C_{R[x]}(JR[x]) \supseteq C_{R[x]}(JR[x] + \langle x\rangle),
\]
which will then give us $C_{R[x]}(JR[x])=C_{R[x]}(J') = C_{R[x]}(JR[x] + \langle x \rangle)$.

To do so, note that by assumption we can write $R=S/I$ where $S$ is a regular $F$-finite ring, and so we can also write $R[x] = S[x]/IS[x]$. We use $\wt{\ }$ to denote lifting an ideal from $R$ or $R[x]$ to $S$ or $S[x]$, as appropriate. Consider $S[x]$ to be $\N$-graded by $x$. Since $\wt{JR[x]+\langle x \rangle}$, the lift of $JR[x]+\langle x\rangle$ to $S[x]$, is homogeneous, as is $IS[x]$,  our lift of the Cartier core
\[
\wt{\Ccore}_{R[x]}(J[x]+\langle x\rangle) = 
\bigcap_{e>0} \wt{JR[x]+\langle x\rangle}:(IS[x]^{[q]}:IS[x])
\]
is also homogeneous.
Consider some homogeneous $g$ in this lift of the Cartier core.
Ideal colon commutes with flat maps, and $S\to S[x]$ and the Frobenius are both flat. Thus for every $q=p^e$ we have 
\[
IS[x]^{[q]}:IS[x] = (I^{[q]}:I)S[x].
\]
Since $g\in \wt{A}_e(J)$, we must have $g(I^{[q]}:I)\subseteq (\wt{JR[x]+\langle x\rangle})^{[q]}$. However, any element of $(\wt{JR[x]+\langle x\rangle})^{[q]}$ of degree less than $q$ must be expressible in terms of elements of $\wt{JR[x]}^{[q]}$. In particular, if $q>\deg g$ then $g(I^{[q]}:I) \subseteq \wt{JR[x]}^{[q]}$. Thus for $e\gg 0$, we have 
\[
g\in \wt{JR[x]}^{[q]}:(IS[x]^{[q]}:IS[x]) = \wt{A}_{e;R[x]}(JR[x]).
\]
By \cite[Prop.~4.15]{Badilla-Cespedes.21}, since $\Ccore_{R[x]}(JR[x]) = \bigcap_{e\gg0}A_{e;R[x]}(JR[x])$, this tells us that
\[
\Ccore_{R[x]}(JR[x]+\langle x \rangle) \subseteq \Ccore_{R[x]}(JR[x])
\]
as desired.

Now we have shown $C_{R[x]}(JR[x])=C_{R[x]}(J')$, and it suffices to show  $C_R(J)R[x]=C_{R[x]}(J[x])$. 
To do so, we will show that adjoining a variable commutes with infinite intersection. 
Consider an arbitrary ideal $K = \bigcap_\alpha K_\alpha$ in $S$. 
As a set, each $K_\alpha S[x]$ is polynomials with coefficients in $K_\alpha$, and so the polynomials in $\bigcap_\alpha K_\alpha S[x]$ are those with coefficients in $K_\alpha$ for every $\alpha$, which is precisely $KS[x]$, as desired.

This lets us repeat the argument in \Cref{qr-flat-extension-containment} but with equalities, and thus 
\[
C_R(J)R[x] = C_{R[x]}(J[x]) = C_{R[x]}(J')
\]
as desired. The contraction result then follows directly from the fact that adjoining a variable is faithfully flat, so that
\[
C_R(J) = C_R(J)R[x] \cap R = C_{R[x]}(J')\cap R. \qedhere
\]
\end{proof}

If $R$ is a quotient of a polynomial ring by a homogeneous ideal, we can also look at how the Cartier core behaves under homogenization. More concretely, take $R=S/I$ for  $S=k[x_1,\ldots, x_d]$ and $I$ a homogeneous ideal of $S$, so that $R$ is $\N$-graded. If $f\in R$, we let $f^h$ denote the minimal homogenization of $f$ in $R[t]$, so that
\[
f^h = t^{\deg f} f\left(\frac{x_1}{t}, \ldots, \frac{x_n}{t}\right).
\]
If $J$ is an ideal of $R$, we define its homogenization in $R[t]$ to be $J^h = \langle f^h \: | \: f\in J\rangle$. 

For any degree-preserving lift of $f$ to $S$, there is a corresponding lift of $f^h$ to $S[t]$ so that the lift of the homogenization is the homogenization of the lift. This means we can freely consider a given homogenization to live either in $R[t]$ or in $S[t]$. Further, the ideals $\wt{(J^h)}$ and $(\wt J)^h$ are the same: $\wt{(J^h)}$ is generated by the lifts of the homogenizations of elements of $J$, and $(\wt J)^h$ is generated by homogenizations of lifts of elements of $J$.

There is also a corresponding dehomogenization map $\demog:R[t]\to R$ defined by $\demog(t)=1$, which ensures that $\demog(f^h) = f$. 

We recall the following straightforward facts about homogenization.
\begin{lemma}
Let $R$ be a quotient of a polynomial ring by a homogeneous ideal. Let $I,J$ be ideals of $R$, and $\{I_\alpha\}$ a family of ideals. Let $f$ be an element of $R$. Then the following statements all hold.
\begin{itemize}
\item $f\in I$ if and only if $f^h\in I^h$.
\item $(I:J)^h=I^h:J^h$ and $\left( \bigcap I_\alpha\right)^h = \bigcap (I_\alpha^h)$.
\item $(I^{h})^{[p^e]} = (I^{[p^e]})^h$.
\end{itemize}
\end{lemma}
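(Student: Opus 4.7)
My plan is to deduce all three bullets from two basic properties of the dehomogenization ring map $\delta : R[t] \to R$ defined by $\delta(t) = 1$: first, $\delta(f^h) = f$ for every $f \in R$; second, every homogeneous $F \in R[t]$ of degree $D$ satisfies $F = t^{D - \deg \delta(F)} (\delta F)^h$. Together these yield the useful characterization that a homogeneous $F \in R[t]$ lies in $I^h$ if and only if $\delta(F) \in I$; in particular $\delta(I^h) = I$, since $\delta$ is a surjective ring map taking the generators $f^h$ of $I^h$ to $f \in I$.

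The first bullet then follows immediately: the forward direction is the definition of $I^h$, and the reverse applies $\delta$. For the second bullet, I would use that colons and intersections of homogeneous ideals are homogeneous, so it suffices to test homogeneous elements. The inclusion $(I:J)^h \subseteq I^h : J^h$ uses the relation $f^h g^h = t^k (fg)^h$ for some $k \geq 0$ (measuring cancellation in $fg$); the reverse takes a homogeneous $F \in I^h : J^h$, observes that $\delta(F) \cdot g = \delta(F g^h) \in I$ for every $g \in J$ so $\delta(F) \in I : J$, and recovers $F = t^{k'} (\delta F)^h \in (I:J)^h$. The intersection identity is proved by the same dehomogenize-then-rehomogenize strategy.

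For the third bullet, the crucial new input is the characteristic-$p$ identity $(f^h)^{p^e} = (f^{p^e})^h$ for any $f \in R$. This follows because in the homogeneous decomposition $f = \sum_d f_d$ the Frobenius kills all cross terms, so both sides compute directly to $\sum_d t^{p^e(\deg f - d)} f_d^{p^e}$. From this, the inclusion $(I^h)^{[p^e]} \subseteq (I^{[p^e]})^h$ is immediate: writing an element of $I^h$ as $F = \sum s_i f_i^h$, we have $F^{p^e} = \sum s_i^{p^e} (f_i^h)^{p^e} = \sum s_i^{p^e} (f_i^{p^e})^h \in (I^{[p^e]})^h$. The reverse inclusion is the main obstacle. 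My plan is to exploit the alternative characterization $I^h = \langle g_1^h, \ldots, g_k^h \rangle : t^\infty$ (for any generating set $g_1, \ldots, g_k$ of $I$), combined with the commutation $(J : t^\infty)^{[p^e]} = J^{[p^e]} : t^\infty$, yielding the chain $(I^h)^{[p^e]} = \langle (g_i^h)^{p^e} \rangle : t^\infty = \langle (g_i^{p^e})^h \rangle : t^\infty = (I^{[p^e]})^h$, where the middle equality uses the characteristic-$p$ identity applied to each generator. The subtle step is the commutation of Frobenius power with $t$-saturation; I would establish this by lifting to the polynomial presentation $S[t]$, where Frobenius is flat and the identity $(J : K)^{[p^e]} = J^{[p^e]} : K^{[p^e]}$ is available for finitely generated ideals, and then descending back to $R[t]$ using that $I_0 S[t]$ is a homogeneous $t$-saturated ideal.
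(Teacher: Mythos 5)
Your framework for the first two bullets---the dehomogenization map $\delta:R[t]\to R$ with $\delta(f^h)=f$, the identity $F=t^{D-\deg\delta(F)}(\delta F)^h$ for homogeneous $F$ of degree $D$, and the resulting criterion that a homogeneous $F$ lies in $I^h$ if and only if $\delta(F)\in I$---is the standard argument, and those two bullets are correct. (The paper records this lemma without proof, so there is nothing to compare against on that front.)

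The third bullet is where your proposal has a genuine gap, starting already with the element-level identity $(f^h)^{p^e}=(f^{p^e})^h$. The correct statement is $(f^h)^{p^e}=t^{\,p^e\deg f-\deg(f^{p^e})}\,(f^{p^e})^h$; your computation of the right-hand side silently assumes $\deg(f^{p^e})=p^e\deg f$, i.e.\ that the leading form of $f$ has nonzero $p^e$-th power. Since $R$ is only assumed to be a quotient by a homogeneous ideal, this can fail, and then so can the bullet itself: take $R=\F_2[x,y,z]/\langle x^2\rangle$ and $I=\langle xy+z\rangle$, so that $f^2=z^2$ and hence $z^2\in(I^{[2]})^h$; on the other hand the leading form of every element of $I$ lies in $\langle x\rangle$, so applying the ``set $t=0$'' map $R[t]\to R$ shows $(I^h)^{[2]}\subseteq\langle t\rangle$, which does not contain $z^2$. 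So the bullet needs a reducedness (or regularity) hypothesis. Separately, even where $(f^h)^{p^e}=(f^{p^e})^h$ does hold, your key commutation $(J:t^\infty)^{[p^e]}=J^{[p^e]}:t^\infty$ is justified by flatness of Frobenius only in the regular ring $S[t]$; the proposed descent to $R[t]$ is not automatic, because the preimage in $S[t]$ of a bracket power computed in $R[t]$ is $\wt J^{[p^e]}+I_0S[t]$ (where $R=S/I_0$) rather than $\wt J^{[p^e]}$, and $t$-saturation does not distribute over such sums. Both problems disappear when $R$ is itself the polynomial ring---which is the only case in which the paper actually invokes this bullet, namely on the lifted ideals in the proof of \Cref{qr-homogenization}---and there your saturation argument closes up; I would state and prove the third bullet in that restricted setting.
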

\begin{proof}
For the first two bullets, see Problems~3.15 and~3.17 in \cite{Ene+Herzog.12}. For the third bullet, use Proposition~3.15 of \cite{Ene+Herzog.12} and Theorem~6.2 of \cite{Herzog+Trung.92}.
\end{proof}

Using these facts, we will prove the following lemma.
\begin{lemma}
\label{qr-homogenization}
Let $R=S/I$ where $S$ is a polynomial ring over an $F$-finite field and $I$ is a homogeneous ideal. Let $J$ be an ideal of $R$. Then
\[
\left(C_R(J)\right)^h = C_{R[t]}(J^h)
\quad\textrm{ and }\quad
C_R(J) = \demog\left( C_{R[t]}(J^h)\right)
\]
\end{lemma}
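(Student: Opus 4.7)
The plan is to lift everything to the ambient polynomial rings $S$ and $S[t]$ via Theorem~\ref{c-ideal-quo-reg-ring}, push homogenization past the intersection, colon, and Frobenius power using the preceding lemma, and then descend back. Writing
\[
\wt{C_R(J)} = \bigcap_{e>0} \wt{J}^{[p^e]} :_S (I^{[p^e]} :_S I)
\]
for the lift of the Cartier core to $S$, I would first apply $(-)^h$ and invoke the preceding lemma (which applies to $S = S/0$ as a quotient of itself by a homogeneous ideal, and whose three bullets let $(-)^h$ pass through intersections, ideal colons, and Frobenius powers) to obtain
\[
(\wt{C_R(J)})^h = \bigcap_{e>0} (\wt{J}^h)^{[p^e]} :_{S[t]} \bigl((I^h)^{[p^e]} :_{S[t]} I^h\bigr).
\]

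Two identifications then finish the right-hand side. Since $I$ is a homogeneous ideal of $S$, each of its homogeneous generators is its own homogenization, so $I^h = IS[t]$ and likewise $(I^{[p^e]})^h = I^{[p^e]}S[t]$. Moreover, the discussion preceding the lemma records $\wt{J^h} = (\wt{J})^h$. Substituting these in and recognizing the right-hand side as $\wt{C_{R[t]}(J^h)}$ by Theorem~\ref{c-ideal-quo-reg-ring} applied to the quotient $R[t] = S[t]/IS[t]$ yields the identity $(\wt{C_R(J)})^h = \wt{C_{R[t]}(J^h)}$ in $S[t]$, which on descending to $R[t]$ gives the first claimed equality.

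The second equality then follows from the first by applying the dehomogenization $\demog : R[t] \to R$, $t \mapsto 1$, together with the general identity $\demog(K^h) = K$ for every ideal $K \subseteq R$ (immediate from $\demog(f^h) = f$ and $\demog$ being a ring homomorphism). The only real obstacle is a bit of bookkeeping: one must correctly match the Frobenius colon formula on both sides so that the identification $I^h = IS[t]$ makes the right-hand colon expression exactly what Theorem~\ref{c-ideal-quo-reg-ring} produces for the presentation $R[t] = S[t]/IS[t]$. Once that is in place, everything else is formal manipulation using the three bullet points of the homogenization lemma.
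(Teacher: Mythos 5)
Your proposal is correct and follows essentially the same route as the paper: lift via Theorem~\ref{c-ideal-quo-reg-ring}, push $(-)^h$ through the intersection, colons, and Frobenius powers using the preceding lemma, identify $I^h = IS[t]$ and $\wt{J^h} = (\wt J)^h$, recognize the result as $\wt{C}_{R[t]}(J^h)$, and obtain the second equality by dehomogenizing. No gaps.
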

\begin{proof}
If we lift to $S[t]$ using \Cref{c-ideal-quo-reg-ring} and the above discussion on lifting and homogenization, then
\begin{align*}
\wt{\left(C_R(J)\right)^h} &= \left(\wt C_R(J)\right)^h \\
    &= \left(\bigcap_{e>0} (\wt J)^{[q]}:(I^{[q]}:I)\right)^h \\
    &= \bigcap_{e>0} \left((\wt{J}^h)^{[q]}:((I^h)^{[q]}:I^h)     \right) \\
    &=  \bigcap_{e>0} \left((\wt{J^h})^{[q]}:(I^{[q]}:I)     \right) \\
    &= \wt{C}_{R[t]}(J^h)
\end{align*}
and so contracting back to $R[t]$ via \Cref{c-ideal-quo-reg-ring},
\[
(C_R(J))^h = C_{R[y]}(J^h). 
\]

The last statement follows directly from dehomogenizing each side of the equation.
\end{proof}


\section{Stanley-Reisner Rings}
\label{sec:stanley-reisner}
A ring $R$ is a \emph{Stanley-Reisner ring} if it can be written as $R = S/I$, where $S$ is a polynomial ring and $I$ is a square-free monomial ideal. 
The following theorem gives a complete description of the Cartier core map for $\Spec R$ where $R$ is a Stanley-Reisner ring.

\begin{thm}
\label{stanley-reisner-c-map}
Let $R$ be a Stanley-Reisner ring over a field that has prime characteristic and is $F$-finite. Let $Q$ be any prime ideal. Then
\[
C_R(Q) = \sum_{\substack{P \in \Min(R) \\ P\subseteq Q}} P.
\]
In particular, the set of prime Cartier cores of $R$, i.e., the set of generic points of $F$-pure centers of $R$, is the set of sums of minimal primes.  
\end{thm}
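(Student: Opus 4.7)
The plan is to prove both inclusions of $C(Q) = \sum_{P \in \Min(R),\ P \subseteq Q} P =: \Sigma(Q)$. Write $R = S/I$ where $S = k[x_1,\ldots,x_n]$ and $I = \bigcap_i P_i$ with each minimal prime $P_i = \langle x_j : j \in F_i^c\rangle$ indexed by the facets $F_i$ of the Stanley-Reisner complex. The inclusion $\Sigma(Q) \subseteq C(Q)$ is the easy direction: every minimal prime $P_i$ is uniformly $F$-compatible by \Cref{c-minl-prime} (every Stanley-Reisner ring is $F$-pure), and in an $F$-pure ring the sum of uniformly $F$-compatible ideals is again uniformly $F$-compatible by combining \Cref{prop:arbitrary-sum-c-ideals} with \Cref{cartier-core=D-compatible}. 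Hence $\Sigma(Q)$ is a uniformly $F$-compatible ideal contained in $Q$, and by \Cref{largest-D-compat} it lies inside the largest such, namely $C(Q)$.

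For the reverse inclusion I would use the Glassbrenner-type formula from \Cref{c-ideal-quo-reg-ring}. Set $G = \bigcap_{P_i \subseteq Q} F_i$, the intersection of the facets corresponding to minimal primes inside $Q$; since the complex is closed under subsets, $G$ is itself a face, and a direct computation gives $\wt{\Sigma(Q)} = \langle x_j : j \notin G\rangle$ as an ideal of $S$. The key algebraic input I would establish first is the identity
\[
I^{[q]} :_S I = \bigcap_i \bigl(P_i^{[q]} :_S P_i\bigr)
\]
valid for every $q = p^e$. The inclusion $\supseteq$ is immediate from $I \subseteq P_i$ and $I^{[q]} = \bigcap_i P_i^{[q]}$. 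For $\subseteq$, given $u \in I^{[q]} :_S I$ and $p \in P_i$, one chooses any $r \in \prod_{j \neq i} P_j \setminus P_i$ (possible because minimal primes are pairwise incomparable), notes that $upr \in uI \subseteq I^{[q]} \subseteq P_i^{[q]}$, and strips $r$ using that $P_i^{[q]}$ is $P_i$-primary and $r \notin P_i$ to conclude $up \in P_i^{[q]}$.

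Next, for each minimal prime $P_j$ with $P_j \not\subseteq Q$ pick $g_j \in P_j \setminus \wt Q$, and set $g = \prod_{P_j \not\subseteq Q} g_j \notin \wt Q$. Let $\xi = \prod_{j \notin G} x_j$ and $h_q := \xi^{q-1} g^q$. The key lemma verifies $h_q \in I^{[q]} :_S I$ colon by colon: for $P_i \subseteq Q$, the inclusion $G \subseteq F_i$ yields $F_i^c \subseteq G^c$, so the generator $\bigl(\prod_{j \in F_i^c} x_j\bigr)^{q-1}$ of $P_i^{[q]} :_S P_i$ divides $\xi^{q-1}$ and places $h_q$ in $P_i^{[q]} :_S P_i$; for $P_i \not\subseteq Q$, the factor $g_i^q$ of $g^q$ lies in $P_i^{[q]} \subseteq P_i^{[q]} :_S P_i$. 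Then for any $f \in \wt{C(Q)}$, \Cref{c-ideal-quo-reg-ring} forces $f h_q \in \wt Q^{[q]}$ for all $q$; since $S$ is regular, the Frobenius is flat and $\wt Q^{[q]}$ is $\wt Q$-primary, so $g^q \notin \wt Q$ lets me peel off $g^q$ and conclude $f \xi^{q-1} \in \wt Q^{[q]}$.

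To finish, write $\wt Q = \mf{q} + \langle x_j : j \notin G\rangle$ where $\mf{q}$ is a prime of the polynomial ring $S_G := k[x_j : j \in G]$, so $\wt Q^{[q]} = \mf{q}^{[q]} S + \langle x_j^q : j \notin G\rangle$. Decompose $f = \sum_\alpha f_\alpha \prod_{j \notin G} x_j^{\alpha_j}$ with $f_\alpha \in S_G$. In the expansion of $f \xi^{q-1}$ each term with $\alpha \neq 0$ acquires an exponent $\geq q$ on some $x_j$, $j \notin G$, and is absorbed by $\langle x_j^q : j \notin G\rangle$; only $f_0 \xi^{q-1}$ survives modulo that ideal, and matching its unique resulting monomial against $\mf{q}^{[q]} S$ forces $f_0 \in \mf{q}^{[q]}$. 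Since this holds for every $q = p^e$ and $\mf{q}^{[q]} \subseteq \mf{q}^q$, Krull's intersection theorem applied to the domain $S_G$ gives $f_0 \in \bigcap_n \mf{q}^n = 0$, whence $f \in \langle x_j : j \notin G\rangle = \wt{\Sigma(Q)}$ as required. The main obstacle is the combinatorial lemma $I^{[q]} :_S I = \bigcap_i (P_i^{[q]} :_S P_i)$ together with the correct choice of the witness $h_q = \xi^{q-1} g^q$ that threads the per-$P_i$ conditions with the $G$-based decomposition; with these in place the primary-reduction and Krull-intersection endgame is routine.
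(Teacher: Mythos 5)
Your proof is correct, but it takes a genuinely different route from the paper's. You prove the hard inclusion $C(Q)\subseteq\sum_{P\subseteq Q}P$ by a direct computation with the Glassbrenner-type formula of \Cref{c-ideal-quo-reg-ring}: you establish the colon identity $I^{[q]}:_S I=\bigcap_i(P_i^{[q]}:_S P_i)$ (which the paper only mentions in passing, attributing a version of it to \`Alvarez Montaner--Boix--Zarzuela, and never uses in its own argument), build an explicit witness $h_q=\xi^{q-1}g^q$ lying in every $P_i^{[q]}:_S P_i$, and then finish by a primary-decomposition and Krull-intersection argument in the subring $k[x_j: j\in G]$. The paper instead avoids all explicit monomial computation: it first localizes to discard the minimal primes not contained in $Q$ (via \Cref{localization}), then homogenizes and squeezes $Q^h$ between $\sum_P P^h$ and the homogeneous maximal ideal, whose Cartier cores are computed abstractly from \Cref{c-minl-prime}, \Cref{prop:arbitrary-sum-c-ideals}, and \Cref{qr-add-var}, and concludes by monotonicity (\Cref{containment}) and dehomogenization (\Cref{qr-homogenization}). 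Your approach is more self-contained and handles an arbitrary prime $Q$ in one pass, at the cost of the combinatorial lemma and the careful bookkeeping with $\xi$, $g$, and the $\bmod\ \langle x_j^q\rangle$ expansion; the paper's approach is shorter given the machinery it has already built, and your easy inclusion ($\Sigma(Q)\subseteq C(Q)$ via \Cref{largest-D-compat}) matches the spirit of the paper's squeeze from below. Two small points of precision worth fixing: $(\prod_{j\in F_i^c}x_j)^{q-1}$ is a generator of $P_i^{[q]}:_S P_i$ only modulo $P_i^{[q]}$ (you only use the membership, so no harm done), and your $g_j$ should be chosen in the lift $\wt{P_j}\setminus\wt{Q}$ rather than in $P_j\setminus\wt{Q}$.
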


This theorem extends some earlier results.
Aberbach and Enescu showed that the splitting prime of a Stanley-Reisner ring, which is its largest proper uniformly $F$-compatible ideal, is the sum of the minimal primes \cite[Prop~4.10]{Aberbach+Enescu.05}. For the reader's convenience, we will reprove this in our proof of \Cref{stanley-reisner-c-map}.
At the other extreme, Vassilev showed that the test ideal of a Stanley-Reisner ring, which is its smallest non-zero uniformly $F$-compatible ideal, is $\sum_{i=1}^t \bigcap_{j\neq i} P_j$ where $P_1,\ldots, P_t$ are the minimal primes of $R$ \cite[Thm.~3.7]{Vassilev.98}. 
In a related but different direction, for a specific choice of $\phi:F_*^e R\to R$, Enescu and Ilioaea showed that the $\phi$-compatible primes of $R$ are precisely the prime monomial ideals which \emph{contain} a minimal prime of $R$. They used this to give a combinatorial description the test ideal of the pair~$(R,\phi)$ \cite[Prop.~3.9, Prop.~3.10]{Enescu+Ilioaea.20}. 

Badilla-C\'espedes showed that if $P'$ is a prime monomial ideal, then $\Ccore(P')$ as well as each $A_e(P')$ is also a monomial ideal, and more explicitly that $A_e(P') = (P')^{[p^e]}+\Ccore(P')$ in this setting \cite[Lemma~4.16,Prop~4.17]{Badilla-Cespedes.21}. Meanwhile, \`Alvarez~Montaner, Boix, and Zarzuela gave a concrete description of $I^{[p^e]}:_S I$ in terms of the minimal primes of $I$, which could be used to explicitly compute the Cartier contractions for any ideal $J$ \cite[Prop~3.2]{AlvarezMontaner+etal.12}.

\begin{proof}[Proof of \Cref{stanley-reisner-c-map}]
Our proof will proceed as follows: First we will reduce to the case where every minimal prime is contained in $Q$. Then we will homogenize and trap $Q^h$ between a sum of minimal primes and the homogeneous maximal ideal, and use \Cref{containment} and the convenient form of monomial primes to get our desired equality.

Let $\wt{\ }$ denote the lift of any ideal to $S$, let $I' = \displaystyle\bigcap_{P\in \Min(R),\ P \subseteq Q} \wt P$ be the intersection of the minimal primes contained in $Q$, and let $R'=S/I'$. Then
\[
R_{Q} \cong S_{\wt Q}/I_{\wt Q} = S_{\wt Q}/I'_{\wt Q} \cong R'_{Q}
\]
and so by \Cref{localization},
\[
C_R(Q)R_{Q} = C_{R_{Q}}(Q) 
= C_{R'_{Q}}(Q) = C_{R'}(Q)R'_{Q}.
\]
Stanley-Reisner rings are $F$-pure \cite[Prop.~5.8]{Hochster+Roberts.76}, and so $C_R(Q)\subseteq Q$ by \Cref{c-ideal-in-original-ideal}, and thus when we lift back to $S$ using \Cref{c-ideal-quo-reg-ring}, we see
\[
\bigcap_{e>0} \wt Q^{[p^e]}: (I^{[p^e]}: I) = \bigcap_{e>0} \wt Q^{[p^e]}:(I'^{[p^e]}:I').
\]
Thus we can use $I'$ as our new $I$, and so we can assume $P\subseteq Q$ for all minimal primes $P$. 

Relabel the variables so that $\sum_{P\in \Min(R)} P = \langle x_1,\ldots, x_c\rangle$ and define $A = k[x_1,\ldots, x_c]/I$, so that $R = A[x_{c+1}, \ldots, x_{d}]$. 
Now we homogenize, so $Q^h \subseteq \mf m$ where $\mf m$ is the homogeneous maximal ideal in $S[t]$. 
Then \Cref{containment} tells us 
\[
C_{R[t]}\left(\sum_{P\in \Min(R)} P^h\right) 
\subseteq C_{R[t]}(Q^h) 
\subseteq C_{R[t]}(\olin{\mf m}). 
\]
Each minimal prime $P$ of $R$ remains a minimal prime of $R[t]$ after homogenizing, so \Cref{c-minl-prime} says $C_{R[t]}(P^h) = P^h$, and \Cref{prop:arbitrary-sum-c-ideals} then says that their sum is also preserved by the Cartier core map.
Applying \Cref{qr-add-var} to $\olin{\mf m}$, we get
\[
\langle x_1,\ldots, x_c\rangle R[t] 
= C_{A}(P_1+\cdots+P_t)A[x_{c+1},\ldots, x_d,t] 
= C_{R[t]}(\olin{\mf m}).
\]

Thus 
\[
\langle x_1,\ldots, x_c\rangle 
= \Ccore_{R[t]}\left(\sum_{P\in \Min(R)} P^h \right) \subseteq C_{R[t]}(Q^h) 
\subseteq C_{R[t]}(\olin{\mf m}) 
= \langle x_1,\ldots, x_c\rangle
\]
and by \Cref{qr-homogenization},
\[
(C_{R}(Q))^h = C_{R[t]}(Q^h) = \langle x_1,\ldots, x_c\rangle.
\]
Dehomogenizing the homogenization always gives back the original ideal, and so
\[
C_R(Q) = \langle x_1,\ldots, x_c\rangle = \sum_{P\in \Min(R)} P.
\]

For the last statement of the theorem, note that since each minimal prime of $R$ corresponds to an ideal of $S$ which is generated by variables, any sum of minimal primes is also prime, and thus is fixed by the Cartier core map.
\end{proof}

Since taking the Cartier core commutes with intersection, \Cref{stanley-reisner-c-map} immediately gives a formula for the Cartier core of any radical ideal in terms of the Cartier cores of its minimal primes. The following corollary instead gives a formula for the Cartier core of an arbitrary ideal which is more analogous to the previous one. 

\begin{cor}
\label{stanley-reisner-general-c-map}
Let $R$ be a Stanley-Reisner ring over a field that has prime characteristic and is $F$-finite. Let $J$ be any ideal. Then
\[
\Ccore_R(J) = \sum_{\substack{\mc Q\subset \Min(R) \\ \big({\bigcap\limits_{P\in \mc Q}} P\big) \subset J  }} 
    \left( \bigcap_{P\in \mc Q} P \right).
\]
\end{cor}
\begin{proof}

First, we will show our desired formula gives an ideal contained in $\Ccore_R(J)$. The restriction on the $P$'s appearing ensures that the resulting ideal is contained in $J$. Further, each $P$ appearing is minimal, so $P= \Ccore_R(P)$ by \Cref{c-minl-prime}. Using Propositions~\ref{arbitrary-intersection-c-ideal} and~\ref{prop:arbitrary-sum-c-ideals} (our intersection and sum results) to apply $\Ccore_R$ to the formula and then using \Cref{containment} to preserve the containment gives
\[
\sum_{\substack{\mc Q\subset \Min(R) \\ \big({\bigcap\limits_{P\in \mc Q}} P\big) \subset J  }} 
        \left( \bigcap_{P\in \mc Q} P \right)
    =\Ccore_R\left( \sum_{\substack{\mc Q\subset \Min(R) \\ \big({\bigcap\limits_{P\in \mc Q}} P\big) \subset J  }} 
        \left( \bigcap_{P\in \mc Q} P \right)\right)
    \subset \Ccore_R(J).
\]

To show equality, we will show that summing over a specific smaller subset in fact already yields $\Ccore_R(J)$, and so the larger sum above must yield $\Ccore_R(J)$ as well.
Since $R$ is Frobenius split, $\Ccore_R(J)$ is radical by \Cref{c-ideal-radical}, so we can write $\Ccore_R(J) = \bigcap_{i=1}^n Q_i$ as the intersection of its minimal primes. 
By the same argument as in the proof of \Cref{cts-map}, applying \Cref{proper-prime}, \Cref{c-ideal-in-original-ideal}, and \Cref{prop:c-ideal-stable}
shows that $\Ccore_R(Q_i)=Q_i$ for each $i$.

Now since the Cartier core commutes with intersection, we use \Cref{stanley-reisner-c-map} to see
\[
\Ccore_R(J) = \bigcap_i \Ccore_R(Q_i) = \bigcap_i \left( \sum_{\substack{P\in \Min(R)\\P\subset Q_i}} P\right).
\]
Writing $R=S/I$ as a quotient of a polynomial ring and lifting back up to $S$, this says that the lift of $\Ccore_R(J)$ is an intersection of sums of monomial ideals. Sum and intersection of monomial ideals commute \cite[Problem~1.17]{Ene+Herzog.12},
and so passing back to the quotient gives
\[
\Ccore_R(J) = \sum_{\substack{P_1,\ldots, P_n \in \Min(R) \\ P_i \subset Q_i }} \left(\bigcap_{i=1}^n P_i\right).
\]
For each possibility for $P_1,\ldots, P_n$ in the above sum, we have $\bigcap_{i=1}^n P_i 
\subset \Ccore_R(J) \subset J$,
and so this sum is a subset of our desired formula, which thus must be equal to $\Ccore_R(J)$ as well.
\end{proof}

\bibliographystyle{abbrvnat}
\bibliography{cartier-core}

\begin{thebibliography}{26}
\providecommand{\natexlab}[1]{#1}
\providecommand{\url}[1]{\texttt{#1}}
\expandafter\ifx\csname urlstyle\endcsname\relax
  \providecommand{\doi}[1]{doi: #1}\else
  \providecommand{\doi}{doi: \begingroup \urlstyle{rm}\Url}\fi

\bibitem[Aberbach and Enescu(2005)]{Aberbach+Enescu.05}
I.~M. Aberbach and F.~Enescu.
\newblock The structure of {$F$}-pure rings.
\newblock \emph{Mathematische Zeitschrift}, 250\penalty0 (4):\penalty0
  791–806, 2005.
\newblock \doi{10.1007/s00209-005-0776-y}.

\bibitem[{\`A}lvarez~Montaner et~al.(2012){\`A}lvarez~Montaner, Boix, and
  Zarzuela]{AlvarezMontaner+etal.12}
J.~{\`A}lvarez~Montaner, A.~F. Boix, and S.~Zarzuela.
\newblock Frobenius and {{Cartier}} algebras of {{Stanley-Reisner}} rings.
\newblock \emph{Journal of Algebra}, 358:\penalty0 162--177, 2012.
\newblock \doi{10.1016/j.jalgebra.2012.03.006}.

\bibitem[Badilla-Céspedes(2021)]{Badilla-Cespedes.21}
W.~Badilla-Céspedes.
\newblock {$F$}-invariants of {Stanley-Reisner} rings.
\newblock \emph{Journal of Pure and Applied Algebra}, 225\penalty0
  (9):\penalty0 106671, 19, 2021.
\newblock \doi{10.1016/j.jpaa.2021.106671}.

\bibitem[Blickle et~al.(2012)Blickle, Schwede, and Tucker]{Blickle+etal.12}
M.~Blickle, K.~Schwede, and K.~Tucker.
\newblock {$F$}-signature of pairs and the asymptotic behavior of {Frobenius}
  splittings.
\newblock \emph{Advances in Mathematics}, 231\penalty0 (6):\penalty0
  3232–3258, 2012.
\newblock \doi{10.1016/j.aim.2012.09.007}.

\bibitem[Brenner et~al.(2019)Brenner, Jeffries, and
  {N{\'u}{\~n}ez-Betancourt}]{Brenner+etal.19}
H.~Brenner, J.~Jeffries, and L.~{N{\'u}{\~n}ez-Betancourt}.
\newblock Quantifying singularities with differential operators.
\newblock \emph{Advances in Mathematics}, 358:\penalty0 106843, 89, 2019.
\newblock \doi{10.1016/j.aim.2019.106843}.

\bibitem[Datta and Tucker(2023)]{Datta+Tucker.21}
R.~Datta and K.~Tucker.
\newblock Openness of splinter loci in prime characteristic.
\newblock \emph{Journal of Algebra}, 2023.
\newblock \doi{doi.org/10.1016/j.jalgebra.2023.03.025}.

\bibitem[Ene and Herzog(2012)]{Ene+Herzog.12}
V.~Ene and J.~Herzog.
\newblock \emph{Gr\"obner bases in commutative algebra}, volume 130 of
  \emph{Graduate {{Studies}} in {{Mathematics}}}.
\newblock {American Mathematical Society, Providence, RI}, 2012.
\newblock \doi{10.1090/gsm/130}.

\bibitem[Enescu and Ilioaea(2020)]{Enescu+Ilioaea.20}
F.~Enescu and I.~Ilioaea.
\newblock Strong test ideals associated to {{Cartier}} algebras.
\newblock \emph{Journal of Algebra and its Applications}, 19\penalty0
  (3):\penalty0 2050044, 16, 2020.
\newblock \doi{10.1142/S0219498820500449}.

\bibitem[Epstein et~al.(2023)Epstein, R.G., and Vassilev]{Epstein+etal.21}
N.~Epstein, R.~R.G., and J.~Vassilev.
\newblock Integral closure, basically full closure, and duals of nonresidual
  closure operations.
\newblock \emph{Journal of Pure and Applied Algebra}, 227\penalty0
  (4):\penalty0 107256, 2023.
\newblock \doi{doi.org/10.1016/j.jpaa.2022.107256}.

\bibitem[Fedder(1983)]{Fedder.83}
R.~Fedder.
\newblock {$F$}-purity and rational singularity.
\newblock \emph{Transactions of the American Mathematical Society},
  278\penalty0 (2):\penalty0 461–480, 1983.
\newblock \doi{10.2307/1999165}.

\bibitem[Glassbrenner(1996)]{Glassbrenner.96}
D.~Glassbrenner.
\newblock Strong {$F$}-regularity in images of regular rings.
\newblock \emph{Proceedings of the American Mathematical Society}, 124\penalty0
  (2):\penalty0 345–353, 1996.
\newblock \doi{10.1090/S0002-9939-96-03030-4}.

\bibitem[Hara and Watanabe(2002)]{Hara+Watanabe.02}
N.~Hara and K.-I. Watanabe.
\newblock F-regular and {{F-pure}} rings vs. log terminal and log canonical
  singularities.
\newblock \emph{Journal of Algebraic Geometry}, 11\penalty0 (2):\penalty0
  363--392, 2002.
\newblock \doi{10.1090/S1056-3911-01-00306-X}.

\bibitem[Herzog and Trung(1992)]{Herzog+Trung.92}
J.~Herzog and N.~V. Trung.
\newblock Gr\"obner bases and multiplicity of determinantal and {{Pfaffian}}
  ideals.
\newblock \emph{Advances in Mathematics}, 96\penalty0 (1):\penalty0 1--37,
  1992.
\newblock \doi{10.1016/0001-8708(92)90050-U}.

\bibitem[Hochster and Roberts(1976)]{Hochster+Roberts.76}
M.~Hochster and J.~L. Roberts.
\newblock The purity of the {{Frobenius}} and local cohomology.
\newblock \emph{Advances in Mathematics}, 21\penalty0 (2):\penalty0 117--172,
  1976.
\newblock \doi{10.1016/0001-8708(76)90073-6}.

\bibitem[Huneke and Watanabe(2015)]{huneke+Watanabe.15}
C.~Huneke and K.-i. Watanabe.
\newblock Upper bound of multiplicity of {{F-pure}} rings.
\newblock \emph{Proceedings of the American Mathematical Society}, 143\penalty0
  (12):\penalty0 5021--5026, 2015.
\newblock \doi{10.1090/proc/12851}.

\bibitem[Kumar and Mehta(2009)]{Kumar+Mehta.09}
S.~Kumar and V.~B. Mehta.
\newblock Finiteness of the number of compatibly split subvarieties.
\newblock \emph{International Mathematics Research Notices. IMRN}, \penalty0
  (19):\penalty0 3595--3597, 2009.
\newblock \doi{10.1093/imrn/rnp067}.

\bibitem[Kunz(1969)]{Kunz.69}
E.~Kunz.
\newblock Characterizations of regular local rings of characteristic $p$.
\newblock \emph{American Journal of Mathematics}, 91:\penalty0 772–784, 1969.
\newblock \doi{10.2307/2373351}.

\bibitem[Matsumura(1989)]{Matsumura.89}
H.~Matsumura.
\newblock \emph{Commutative Ring Theory}, volume~8 of \emph{Cambridge
  {{Studies}} in {{Advanced Mathematics}}}.
\newblock {Cambridge University Press, Cambridge}, second edition, 1989.

\bibitem[Schwede(2008)]{Schwede.08}
K.~Schwede.
\newblock Generalized test ideals, sharp {$F$}-purity, and sharp test elements.
\newblock \emph{Mathematical Research Letters}, 15\penalty0 (6):\penalty0
  1251–1261, 2008.
\newblock \doi{10.4310/MRL.2008.v15.n6.a14}.

\bibitem[Schwede(2009)]{Schwede.09}
K.~Schwede.
\newblock {$F$}-adjunction.
\newblock \emph{Algebra \& Number Theory}, 3\penalty0 (8):\penalty0 907–950,
  2009.
\newblock \doi{10.2140/ant.2009.3.907}.

\bibitem[Schwede(2010{\natexlab{a}})]{Schwede.10}
K.~Schwede.
\newblock A refinement of sharply {$F$}-pure and strongly {$F$}-regular pairs.
\newblock \emph{Journal of Commutative Algebra}, 2\penalty0 (1):\penalty0
  91–109, 2010{\natexlab{a}}.
\newblock \doi{10.1216/JCA-2010-2-1-91}.

\bibitem[Schwede(2010{\natexlab{b}})]{Schwede.10a}
K.~Schwede.
\newblock Centers of {$F$}-purity.
\newblock \emph{Mathematische Zeitschrift}, 265\penalty0 (3):\penalty0
  687–714, 2010{\natexlab{b}}.
\newblock \doi{10.1007/s00209-009-0536-5}.

\bibitem[Schwede(2011)]{Schwede.11a}
K.~Schwede.
\newblock Test ideals in non-{$\mathbb Q$}-{G}orenstein rings.
\newblock \emph{Transactions of the American Mathematical Society},
  363\penalty0 (11):\penalty0 5925–5941, 2011.
\newblock \doi{10.1090/S0002-9947-2011-05297-9}.

\bibitem[Schwede and Tucker(2010)]{Schwede+Tucker.10}
K.~Schwede and K.~Tucker.
\newblock On the number of compatibly {Frobenius} split subvarieties, prime
  {$F$}-ideals, and log canonical centers.
\newblock \emph{Université de Grenoble. Annales de l'Institut Fourier},
  60\penalty0 (5):\penalty0 1515–1531, 2010.

\bibitem[Takagi(2004)]{Takagi.04}
S.~Takagi.
\newblock F-singularities of pairs and inversion of adjunction of arbitrary
  codimension.
\newblock \emph{Inventiones Mathematicae}, 157\penalty0 (1):\penalty0 123--146,
  2004.
\newblock \doi{10.1007/s00222-003-0350-3}.

\bibitem[Vassilev(1998)]{Vassilev.98}
J.~Vassilev.
\newblock Test ideals in quotients of {$F$}-finite regular local rings.
\newblock \emph{Transactions of the American Mathematical Society},
  350\penalty0 (10):\penalty0 4041--4051, 1998.
\newblock \doi{10.1090/S0002-9947-98-02128-X}.

\end{thebibliography}

\end{document}